\numberwithin{equation}{section}
\newtheorem{Thm}[equation]{Theorem}
\newtheorem*{Thm*}{Theorem}
\newtheorem{Prop}[equation]{Proposition}
\newtheorem{Lem}[equation]{Lemma}
\newtheorem{Cor}[equation]{Corollary}
\theoremstyle{remark}
\newtheorem{Rem}[equation]{Remark}
\newtheorem{Def}[equation]{Definition}
\newtheorem{Ter}[equation]{Terminology}
\newtheorem{Exa}[equation]{Example}
\newtheorem{Conv}[equation]{Convention}
\theoremstyle{definition}
\newtheorem*{Convs*}{Conventions}
\newtheorem*{Ack*}{Acknowledgements}
\newtheorem*{Org*}{Organization}
\newcommand{\nc}{\newcommand}
\nc{\dmo}{\DeclareMathOperator}
\dmo{\Ab}{Ab}
\dmo{\add}{add} 
\dmo{\Aut}{Aut}
\dmo{\bickMack}{\biMack^{\mathsf{ic}}_\kk} 
\dmo{\biMack}{\mathsf{Mack}} 
\dmo{\BBouc}{\cat{B}}
\dmo{\Ch}{Ch}
\dmo{\CoInd}{CoInd}
\dmo{\Der}{D}
\dmo{\DER}{\mathsf{DER}}
\dmo{\Db}{D^b}
\dmo{\End}{End}
\dmo{\Endbicat}{\mathsf{End}}
\dmo{\Fun}{\mathrm{Fun}} 
\dmo{\Free}{\mathrm{free}}
\dmo{\Cofree}{\mathrm{cofree}}
\dmo{\forget}{\mathrm{forget}}
\dmo{\Hom}{Hom}
\dmo{\Ho}{Ho}
\dmo{\img}{Im}
\dmo{\incl}{incl}
\dmo{\Ind}{Ind}
\dmo{\inj}{in} 
\dmo{\Inj}{Inj} 
\dmo{\Ker}{Ker}
\dmo{\Kadd}{K_0^{add}}
\dmo{\Kex}{K_0^{exa}}
\dmo{\Kexhigher}{K^{exa}_*}
\dmo{\Ktr}{K_0^{tri}}
\dmo{\Mackey}{Mack} 
\dmo{\CohMackey}{CohMack} 
\dmo{\Map}{Map}%
\dmo{\Mod}{Mod}
\dmo{\Modbicat}{\mathsf{Mod}}
\dmo{\Comod}{Comod}
\dmo{\Nat}{Nat}
\dmo{\Qcoh}{Qcoh}
\dmo{\coh}{coh}
\dmo{\fgmod}{mod}
\dmo{\fgfree}{free}
\dmo{\stmod}{stmod}
\dmo{\StMod}{StMod}
\dmo{\latt}{latt}
\dmo{\Mor}{Mor}%
\dmo{\Obj}{Obj}
\dmo{\Proj}{Proj} 
\dmo{\fgproj}{proj} 
\dmo{\pr}{pr}
\dmo{\PsFunJJ}{\PsFun_{\JJ_!}^{\JJ^\prime\textrm{\!-}\mathsf{oplax}}}
\dmo{\PsFunJop}{\PsFun_{{{\JJ}_{{}_{*}}}}}
\dmo{\PsFunJ}{\PsFun_{\JJ_!}}
\dmo{\PsFunoplax}{\PsFun^{\mathsf{oplax}}}
\dmo{\PsFun}{\mathsf{PsFun}} 
\dmo{\PsNat}{\mathsf{PsNat}}
\dmo{\PsMon}{\mathsf{PsMon}} 
\dmo{\PsMod}{\mathsf{PsMod}} 
\dmo{\BrPsMon}{\mathsf{BrPsMon}}
\dmo{\SymPsMon}{\mathsf{SymPsMon}}
\dmo{\Rad}{Rad}
\dmo{\Res}{Res}
\dmo{\SH}{SH}
\dmo{\Sh}{Sh}
\dmo{\Spanname}{{\sf Span}}
\dmo{\Spec}{Spec}
\dmo{\Stab}{Stab}
\dmo{\twoFun}{2\mathsf{Fun}}
\dmo{\tr}{tr}
\nc{\ababs}{{\sl ab absurdo}}
\nc{\Add}{\mathsf{Add}}
\nc{\ADD}{\mathsf{ADD}}
\nc{\ADDic}{\mathsf{ADD}^{\ic}}
\nc{\ADDer}{\mathsf{ADDer}}
\nc{\ADDick}{\mathsf{ADD}_\kk{}^{\!\!(\ic)}} 
\nc{\adhoc}{{\sl ad hoc}}
\nc{\adjto}{\rightleftarrows}
\nc{\adj}{\dashv\,}
\nc{\afortiori}{{\sl a fortiori}}
\nc{\aka}{{a.\,k.\,a.}\ }
\nc{\all}{\mathsf{all}}
\nc{\apriori}{{\sl a priori}}
\nc{\ass}{\mathrm{ass}} 
\nc{\bbA}{\mathbb{A}}
\nc{\bbB}{\mathbb{B}}
\nc{\bbC}{\mathbb{C}}
\nc{\bbD}{\mathbb{D}}
\nc{\bbF}{\mathbb{F}}
\nc{\bbI}{\mathbb{I}}
\nc{\bbM}{\mathbb{M}}
\nc{\bbN}{\mathbb{N}}
\nc{\bbP}{\mathbb{P}}
\nc{\bbQ}{\mathbb{Q}}
\nc{\bbR}{\mathbb{R}}
\nc{\bbZ}{\mathbb{Z}}
\nc{\bs}{\backslash}
\nc{\BurnG}{\cat{A}(G)}
\nc{\cat}[1]{\mathcal{#1}}
\nc{\Cat}{\mathsf{Cat}}
\nc{\CAT}{\mathsf{CAT}}
\nc{\cf}{{\sl cf.}\ }
\nc{\Cf}{{\sl Cf.}\ }
\nc{\colim}{\mathop{\mathrm{colim}}}
\nc{\costar}{**}
\nc{\co}{{\mathrm{co}}}
\nc{\DD}{\cat{D}}
\nc{\Displ}{\displaystyle}
\nc{\diag}[1]{\overline{#1}} 
\nc{\offdiag}[1]{{#1}^\dagger} 
\nc{\doublequot}[3]{#1\backslash #2/#3}
\nc{\Ecell}{\rotatebox[origin=c]{90}{$\Downarrow$}} 
\nc{\eg}{{\sl e.g.}\ } 
\nc{\Eg}{{\sl E.g.}\ } 
\nc{\eps}{\varepsilon}
\nc{\equalby}[1]{\overset{\textrm{#1}}{=}}
\nc{\exact}{\mathsf{ex}}
\nc{\faithful}{\mathsf{faithful}}
\nc{\faith}{\mathsf{faithf}}
\nc{\final}{\textrm{\scriptsize{\ding{93}}}} 
\nc{\Funadd}{\Fun_{\amalg}}
\nc{\Funplus}{\Fun_{+}}
\nc{\fun}{\mathrm{fun}} 
\nc{\GG}{\mathbb{G}}
\nc{\gpdG}{{\groupoidf_{\!\smallslash\!G}}} 
\nc{\gpdGzero}{{\groupoidf_{\!\smallslash\!G_0}}\!} 
\nc{\gpdfover}[1]{\groupoidf_{\!\smallslash\!#1}}
\nc{\gpd}{\groupoid}%
\nc{\gps}{\mathsf{groups}} 
\nc{\groconn}{\groupoid_{\mathsf{conn}}}
\nc{\groupoidf}{\groupoid{}^{\smallfaithful}}
\nc{\gpdf}{\groupoidf} 
\nc{\groupoid}{\mathsf{gpd}}
\nc{\group}{\mathsf{group}} 
\nc{\Gsets}{G\sset}
\nc{\HGfK}{\doublequot{H}{G}{f(K)}}%
\nc{\HGK}{\doublequot HGK}
\nc{\Homcat}[1]{\Hom_{\cat #1}}
\nc{\hooklongleftarrow}{\longleftarrow\joinrel\rhook}
\nc{\hooklongrightarrow}{\lhook\joinrel\longrightarrow}
\nc{\hook}{\hookrightarrow}
\nc{\Hsets}{H\mathsf{-sets}}
\nc{\ic}{\mathsf{ic}}
\nc{\ICAdd}{\Add_{\ic}}%
\nc{\ICADD}{\ADD_{\ic}}%
\nc{\Idcat}[1]{\Id_{\cat{#1}}}
\nc{\id}{\mathrm{id}}
\nc{\Id}{\mathrm{Id}}
\nc{\ie}{{\sl i.e.}\ }
\nc{\into}{\mathop{\rightarrowtail}}
\nc{\inv}{^{-1}}
\nc{\Iout}[1]{\Ivo{\sout{#1}}}
\nc{\isocell}[1]{\undersett{ #1}{\overset{\sim}{\Ecell}}} 
\nc{\backisocell}[1]{\undersett{ #1}{\overset{\sim}{\Wcell}}} 
\nc{\Isocell}[1]{\undersett{ #1}{\overset{\sim}{\Longrightarrow}}}
\nc{\isoEcell}{\overset{\sim}{\Rightarrow}} 
\nc{\isotoo}{\stackrel{\sim}\longrightarrow}
\nc{\isoto}{\buildrel \sim\over\to}
\nc{\Ivo}[1]{{\color{OliveGreen}#1}}
\nc{\JJ}{\mathbb{J}}
\nc{\kk}{\Bbbk}
\nc{\KK}{\mathrm{KK}}
\nc{\leps}{{}^{\ell}\eps}
\nc{\leta}{{}^{\ell}\eta}
\nc{\loccit}{{\sl loc.\ cit.}}
\nc{\lotoo}[1]{\overset{#1}{\,\longleftarrow\,}}
\nc{\loto}[1]{\overset{#1}{\leftarrow}}
\nc{\lto}{\leftarrow}
\nc{\lun}{\mathrm{lun}} 
\nc{\Mackintro}[1]{(Mack\,\ref{Mack-#1-intro})}
\nc{\Mack}[1]{(Mack\,\ref{Mack-#1})}
\nc{\Mid}{\,\big|\,}
\nc{\MMod}{\,\text{-}\Mod}%
\nc{\PProj}{\,\text{-}\Proj}
\nc{\CComod}{\,\text{-}\Comod}
\dmo{\mods}{mod}%
\nc{\mmods}{\,\text{-}\mathrm{mod}}%
\nc{\MM}{\cat{M}}
\nc{\Muniv}{\cat{M}_{\mathsf{univ}}}
\nc{\Ncell}{\rotatebox[origin=c]{0}{$\Uparrow$}} 
\nc{\NEcell}{\rotatebox[origin=c]{135}{$\Downarrow$}} 
\nc{\NN}{\cat{N}}
\nc{\noloc}{\nobreak\mspace{6mu plus 1mu}{:}\nonscript\mkern-\thinmuskip\mathpunct{}\mspace{2mu}}
\nc{\NWcell}{\rotatebox[origin=c]{-135}{$\Downarrow$}} 
\nc{\oEcell}[1]{\overset{\scriptstyle #1}{\Ecell}} 
\nc{\oWcell}[1]{\overset{\scriptstyle #1}{\Wcell}} 
\nc{\ointo}[1]{\overset{#1}{\rightarrowtail}}
\nc{\olto}[1]{\overset{#1}\lto}
\nc{\onto}{\mathop{\twoheadrightarrow}}
\nc{\op}{{\mathrm{op}}}
\nc{\xto}[1]{\xrightarrow{#1}}
\nc{\oto}[1]{\overset{#1}\to}
\nc{\Paul}[1]{{\color{Blue}#1}}
\nc{\pih}[1]{\tau_{1}#1}%
\nc{\Pout}[1]{\Paul{\sout{#1}}}
\nc{\qquadtext}[1]{\qquad\textrm{#1}\qquad}
\nc{\quadtext}[1]{\quad\textrm{#1}\quad}
\nc{\ra}{\rightarrow}
\nc{\reps}{{}^{r\!}\eps}
\nc{\restr}[1]{{|_{\scriptstyle #1}}}
\nc{\reta}{{}^{r\!}\eta}
\nc{\run}{\mathrm{run}} 
\nc{\Sad}{\mathsf{Sad}}
\nc{\SAD}{\mathsf{SAD}}
\nc{\sbull}{{\scriptscriptstyle\bullet}}
\nc{\Scell}{\rotatebox[origin=c]{0}{$\Downarrow$}} 
\nc{\SEcell}{\rotatebox[origin=c]{45}{$\Downarrow$}} 
\nc{\SET}[2]{\big\{\,#1\Mid#2\,\big\}}
\nc{\set}{\mathsf{set}} 
\nc{\Set}{\mathsf{Set}}
\nc{\smallfaithful}{\mathsf{f}}
\nc{\smallslash}{{}^{\scriptscriptstyle/}}
\nc{\smat}[1]{\left(\begin{smallmatrix} #1 \end{smallmatrix}\right)}
\nc{\spanG}{{\widehat{\mathsf{gp}\,\,}\!\!\mathsf{d}}{}^\smallfaithful_{\!{}^{\scriptscriptstyle/}\!G}}
\nc{\Spanhat}{\textrm{\sf S}\widehat{\textrm{\sf pan}}} %
\nc{\Span}{\Spanname}
\nc{\sset}{\textrm{-}\set}
\nc{\str}{\mathsf{str}}
\nc{\SWcell}{\rotatebox[origin=c]{-45}{$\Downarrow$}} 
\nc{\too}{\mathop{\longrightarrow}\limits}
\nc{\tristars}{\begin{center} $ *\;*\;* $ \end{center}}
\nc{\tSpan}{\pih{\Spanname}}
\nc{\Unit}{\mathbb{1}}
\nc{\undersett}[1]{\underset{\scriptstyle #1}}
\nc{\un}{\mathrm{un}} 
\nc{\vcorrect}[1]{{\vphantom{\vbox to #1em{}}}}
\nc{\Wcell}{\rotatebox[origin=c]{90}{$\Uparrow$}} 
\nc{\what}[1]{\widehat{\cat{#1}}}
\nc{\xra}{\xrightarrow}
\nc{\xBur}{\mathrm{B^c}} 
\nc{\xBurk}{ \mathrm{B}^{\mathrm{c}}_{\kk} } 
\nc{\Bur}{\mathrm{B}} 
\nc{\Burk}{\Bur_{\kk}} 
\nc{\isoTo}{\overset{\sim}{\Rightarrow}}
\nc{\isoc}[3]{#1\,{\diamond}_{_{\!#3}}#2}
\nc{\Isoc}[3]{(\isoc{#1}{#2}{#3})}
\nc{\lproj}{\mathrm{Lp}} 
\nc{\rproj}{\mathrm{Rp}} 
\begin{document}


\title{On Krull--Schmidt bicategories}

\author{Ivo Dell'Ambrogio}
\date{\today}

\address{\ \medbreak
\noindent Univ.\ Lille, CNRS, UMR 8524 - Laboratoire Paul Painlev\'e, F-59000 Lille, France}
\email{ivo.dell-ambrogio@univ-lille.fr}
\urladdr{http://math.univ-lille1.fr/$\sim$dellambr}

\begin{abstract} \normalsize
We study the existence and uniqueness of direct sum decompositions in additive bicategories. 
We find a simple definition of Krull--Schmidt bicategories, for which we prove the uniqueness of decompositions into indecomposable objects as well as a characterization in terms of splitting of idempotents and properties of 2-cell endomorphism rings.
Examples of Krull--Schmidt bicategories abound, with many arising from the various flavors of 2-dimensional linear representation theory.
\end{abstract}

\thanks{Author partially supported by Project ANR ChroK (ANR-16-CE40-0003) and Labex CEMPI (ANR-11-LABX-0007-01).}

\subjclass[2020]{20J05, 18B40, 18N10, 18N25} 
\keywords{Bicategory, Krull--Schmidt property, 2-representation theory.}

\maketitle

\tableofcontents

\section{Introduction and results}
\label{sec:intro} %

A Krull--Schmidt category is an additive category in which every object admits a decomposition into a direct sum of finitely many objects which are strongly indecomposable, \ie whose endomorphism rings are local. 
In fact, in a Krull--Schmidt category strongly indecomposable objects coincide with those which are indecomposable for direct sums.
Most importantly, every object is a direct sum of indecomposable ones in a unique way, up to isomorphism and permutation of the factors.
There is a nice characterization of Krull--Schmidt categories: They are precisely those additive categories in which all idempotent morphisms split and all endomorphism rings are semiperfect in the sense of Bass. See \eg~\cite{Krause15}.

The goal of this note is to prove versions of the above eminently useful facts in the setting of additive \emph{bi}categories. 
Perhaps surprisingly, the new bicategorical theory is arguably more elegant and easier to use than its classical counterpart.  

The usual 1-categorical Krull--Schmidt property holds, in particular, for categories of finite dimensional representations of groups and algebras. It is a cornerstone of representation theory which helps alleviate the pain of leaving the semisimple case for the modular case, where it becomes imperative to understand the global structure of categories of representations. 
The Krull--Schmidt property is also surprisingly subtle (\eg it holds for finite length modules over any ring but fails for artinian modules already over certain semilocal noetherian commutative rings~\cite{FHLV95} \cite{Facchini03}), and it has benefited from some serious investigation.
Similarly, as \emph{2-representation theory} (in its various directions) is beginning to venture beyond the semisimple case too, it seems wise to prepare a suitable Krull--Schmidt theory. 
We do this in Sections~\ref{sec:prelims}-\ref{sec:bc}. 

The final \Cref{sec:exas} discusses examples of Krull--Schmidt bicategories. 
They include various bimodule bicategories of rings (such as rings with noetherian center), bicategories of Mackey 2-motives in the sense of Balmer--Dell'Ambrogio (\Cref{Exa:M2M}~-- our original motivating example), 
semisimple 2-categories such as that of finite dimensional 2-vector spaces \`a~la Kapranov--Voevodsky--Neuchl (\Cref{Exa:2vect}) or 2-Hilbert spaces \`a~la Baez (\Cref{Exa:2hilb}), 
finite module categories over finite tensor categories \`a~la Etingof--Ostrik (\Cref{Cor:tens-cat}), 
the 2-category of finite dimensional 2-representations of any 2-group over any field (\Cref{Cor:2reps}),
and the 2-category of 2-representations of a finitary linear 2-category \`a~la Mazorchuk--Miemietz (\Cref{Cor:MM}).
Underlying the representation-theoretic examples is \Cref{Thm:KS-PsFun}, which categorifies the classical fact that finite dimensional representations of any algebra form a Krull--Schmidt category.
In view of all this, we expect Krull--Schmidt bicategories to enter the basic toolkit of any future modular (\ie nonsemisimple) linear 2-representation theory, where global structural questions should gain in importance relative to combinatorial ones.
\begin{center} $*\;*\;*$ \end{center}

We now present our theoretical results in details.

We say a bicategory is \emph{additive} if it is locally additive, \ie its Hom categories are additive and its composition functors are additive in both variables, and if it admits all finite direct sums of its objects; see \Cref{sec:prelims} for details. For instance additive categories, additive functors and natural transformations form a very large additive bicategory.
A nonzero object $X$ in an additive bicategory is \emph{indecomposable} if a direct sum decomposition $X\simeq X_1\oplus X_2$ necessarily implies that $X_1 \simeq 0$ or $X_2 \simeq 0$ (where $\simeq$ denotes equivalence). 
We say $X$ is \emph{strongly indecomposable} if its 2-cell endomorphism ring $\End_{\cat B(X,X)}(\Id_X)$ is a connected commutative ring (it is always commutative by the Eckmann--Hilton argument).
Here is our main result:

\begin{Thm*} [{See \Cref{Thm:KRS}}]
Suppose that an object $X$ of an additive bicategory admits two direct sum decompositions $Y_1\oplus \cdots \oplus Y_n$ and $Z_1\oplus \cdots \oplus Z_m$ into strongly indecomposable objects.
Then $n=m$ and there are equivalences $Y_k\simeq Z_{\sigma(k)}$ for all~$k$ after some permutation $\sigma \in \Sigma_n$ of the factors.
\end{Thm*}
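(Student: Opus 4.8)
The plan is to reduce this two-dimensional statement to one-dimensional ring theory by passing to $2$-cell endomorphism rings. For any object $W$ write $R_W := \End_{\cat{B}(W,W)}(\Id_W)$; by the Eckmann--Hilton argument recalled above each $R_W$ is commutative. First I would record how a direct sum decomposition of the object $X$ is reflected in $R_X$. A decomposition $X \simeq Y_1 \oplus \cdots \oplus Y_n$ comes with structure $1$-cells $\iota_k \colon Y_k \to X$ and $\pi_k \colon X \to Y_k$ satisfying the biproduct relations $\pi_j \iota_k \cong \delta_{jk}\,\Id_{Y_k}$ and $\bigoplus_k \iota_k\pi_k \cong \Id_X$ in the additive category $\cat{B}(X,X)$. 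The latter isomorphism exhibits $\Id_X$ as a direct sum of the idempotent $1$-cells $e_k := \iota_k\pi_k$, and the composites $\Id_X \twoheadrightarrow e_k \hookrightarrow \Id_X$ furnish pairwise orthogonal idempotent $2$-cells $p_1,\dots,p_n \in R_X$ with $\sum_k p_k = 1$. Standard additive algebra identifies the corner ring with the endomorphism ring of the summand, $p_k R_X p_k \cong \End_{\cat{B}(X,X)}(e_k)$, while the relation $\pi_k\iota_k \cong \Id_{Y_k}$ gives a Morita-type isomorphism $\End_{\cat{B}(X,X)}(\iota_k\pi_k) \cong \End_{\cat{B}(Y_k,Y_k)}(\pi_k\iota_k) \cong R_{Y_k}$, obtained by conjugating a $2$-cell by $\iota_k$ and $\pi_k$ and back. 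Putting these together yields $p_k R_X p_k \cong R_{Y_k}$.

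Second, I would match strong indecomposability with primitivity of idempotents. By definition $Y_k$ is strongly indecomposable exactly when $R_{Y_k}$ is connected, and since $R_X$ is commutative the corner $p_k R_X p_k = p_k R_X$ is connected if and only if $p_k$ is a primitive idempotent of $R_X$. Thus both hypothesized decompositions furnish complete sets of pairwise orthogonal primitive idempotents of $R_X$: say $1 = \sum_{k=1}^n p_k$ from the $Y$'s and $1 = \sum_{l=1}^m q_l$ from the $Z$'s. Here I invoke the elementary uniqueness of such decompositions in a commutative ring: from $p_k = p_k \cdot 1 = \sum_l p_k q_l$, which decomposes the primitive idempotent $p_k$ into orthogonal idempotents $p_k q_l$, exactly one summand is nonzero and equals $p_k$; the symmetric argument then produces a bijection $\sigma$ with $p_k = q_{\sigma(k)}$, and in particular $n = m$.

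Third, and this is where the genuinely bicategorical content lies, I would upgrade the equalities $p_k = q_{\sigma(k)}$ of idempotent $2$-cells to equivalences $Y_k \simeq Z_{\sigma(k)}$ of objects. Splitting the single idempotent $p_k = q_{\sigma(k)}$ inside the additive category $\cat{B}(X,X)$ yields, by uniqueness of idempotent splittings, an isomorphism of $1$-cells $\iota^Y_k \pi^Y_k \cong \iota^Z_{\sigma(k)}\pi^Z_{\sigma(k)}$; call this common idempotent $1$-cell $e$. Both $Y_k$ and $Z_{\sigma(k)}$ are then object-level splittings of the \emph{same} idempotent $1$-cell $e$ on $X$ (each sits inside $X$ via $\iota,\pi$ with $\pi\iota \cong \Id$ and $\iota\pi \cong e$). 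The desired conclusion $Y_k \simeq Z_{\sigma(k)}$ follows once one knows that an idempotent $1$-cell admits an essentially unique splitting at the level of objects, the $2$-dimensional analogue of the Karoubi envelope being well defined up to equivalence.

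The main obstacle is precisely this last lifting step, and it is the reason the theorem is about the bicategory and not about the commutative ring $R_X$ alone: an isomorphism class of idempotent $2$-cells records only the $1$-cell summand $e$ of $\Id_X$, whereas the objects $Y_k$ and $Z_{\sigma(k)}$ carry additional $0$-cell data. I would therefore lean on the prior development (the splitting-of-idempotents characterization advertised in the abstract) for two facts: (a) each object summand of $X$ canonically equips $e = \iota\pi$ with the structure of a split idempotent $1$-cell, and (b) such split idempotent $1$-cells have object splittings that are unique up to equivalence. Granting (a)--(b), the agreement $p_k = q_{\sigma(k)}$ forces agreement of the associated split idempotent $1$-cells and hence $Y_k \simeq Z_{\sigma(k)}$, completing the argument. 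A secondary point to pin down is the Morita isomorphism $p_k R_X p_k \cong R_{Y_k}$ together with the verification that the off-diagonal Hom-spaces $\Hom_{\cat{B}(X,X)}(e_j,e_k)$ cause no trouble in the idempotent bookkeeping; here the commutativity of $R_X$ does the work.
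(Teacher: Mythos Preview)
Your proposal is correct and follows essentially the same route as the paper's first proof: reduce both decompositions to complete families of orthogonal primitive idempotents in the commutative ring $\End_{\cat B}(\Id_X)$, invoke the uniqueness of such families, and then lift the resulting equalities of idempotent $2$-cells back to equivalences of summands. Your lifting step is phrased via uniqueness of object-level splittings of a fixed idempotent $1$-cell, whereas the paper packages it as \Cref{Prop:idemp-char} (two splitting data are equivalent iff their associated idempotent elements of $\End_{\cat B}(\Id_X)$ agree), but this is the same content; your Morita identification $p_kR_Xp_k\cong R_{Y_k}$ is likewise what the paper obtains from the product decomposition of $\End_{\cat B}(\Id_X)$ in \Cref{Rem:matrix-not}.
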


This suggests the following definition: An additive bicategory is \emph{Krull--Schmidt} if every object is equivalent to a direct sum of finitely many strongly indecomposable objects, in the above sense. We easily deduce (\Cref{Cor:KRS}) that indecomposable and strongly indecomposable objects coincide  in a Krull--Schmidt bicategory, hence that every object admits a unique decomposition into indecomposable ones.

We also obtain a very satisfying characterization of Krull--Schmidt bicategories.
Let us call a commutative ring \emph{semiconnected} if it is a direct product of finitely many connected rings, \ie if its Zariski spectrum has finitely many topological connected components.
We also say an additive bicategory is \emph{weakly block complete}\footnote{Unfortunately, ``idempotent complete'' and ``block complete'' already have different meanings in this context.} if every decomposition $1_X = e_1 + e_2$ of the identity 2-cell $1_X \in \End_{\cat B}(\Id_X)$ as a sum of two orthogonal idempotent 2-cells is induced by a direct sum decomposition $X\simeq X_1 \oplus X_2$ at the level of objects (see details in \Cref{Def:bc}).
Then: 

\begin{Thm*} [{See \Cref{Thm:KS-char}}]
An additive bicategory is Krull--Schmidt if and only if it is weakly block complete and the 2-cell endomorphism ring $\End(\Id_X)$ of every object $X$ is semiconnected.
\end{Thm*}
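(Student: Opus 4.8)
The plan is to prove both implications by reducing everything to the 2-cell endomorphism rings via a single structural observation, which I would isolate as the technical heart of the argument: whenever an object decomposes as a direct sum $X \simeq X_1 \oplus \cdots \oplus X_n$, there is a natural ring isomorphism $\End(\Id_X) \cong \prod_{i=1}^n \End(\Id_{X_i})$. This follows by unwinding the universal property of direct sums. For any $W$ one has $\cat{B}(X,W) \simeq \prod_i \cat{B}(X_i, W)$ and $\cat{B}(W,X) \simeq \prod_i \cat{B}(W,X_i)$, so that $\cat{B}(X,X)$ is equivalent to the \emph{matrix category} $\prod_{i,j} \cat{B}(X_j, X_i)$, under which $\Id_X$ corresponds to the diagonal object with entries $\Id_{X_i}$ and zero off-diagonal $1$-cells (the biproduct relations $\rho_i \iota_j \cong \delta_{ij}\Id_{X_i}$). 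Vertical composition of $2$-cells is computed entrywise, the off-diagonal entries are morphisms between zero objects, and hence $\End(\Id_X) \cong \prod_i \End(\Id_{X_i})$ as rings. The essential extra point is that this isomorphism is compatible with the dictionary of \Cref{Def:bc}: the idempotent of $\End(\Id_X)$ induced by a regrouped decomposition $X \simeq (\bigoplus_{i\in S}X_i)\oplus(\bigoplus_{i\notin S}X_i)$ corresponds under the isomorphism to the subset-indicator idempotent $\sum_{i\in S} p_i$, where $p_i$ is the $i$-th coordinate idempotent.

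For the forward implication, assume $\cat{B}$ is Krull--Schmidt and fix $X$, with a decomposition $X \simeq \bigoplus_i X_i$ into strongly indecomposables. Each $R_i := \End(\Id_{X_i})$ is connected by hypothesis, so the isomorphism above exhibits $\End(\Id_X) \cong \prod_i R_i$ as a finite product of connected rings, i.e. as a semiconnected ring. For weak block completeness, let $1_X = e_1 + e_2$ be any decomposition into orthogonal idempotents. Transporting $e_1$ to $\prod_i R_i$ and using that a product of \emph{connected} rings has only subset-indicator idempotents, I get $e_1 = \sum_{i\in S} p_i$ and $e_2 = \sum_{i \notin S} p_i$ for a unique subset $S$. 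By the compatibility noted above, this is precisely the decomposition induced by $X \simeq (\bigoplus_{i\in S}X_i)\oplus(\bigoplus_{i\notin S}X_i)$, which is what weak block completeness demands.

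For the reverse implication, assume $\cat{B}$ is weakly block complete with every $\End(\Id_X)$ semiconnected, and fix $X$. Write $\End(\Id_X) \cong \prod_{i=1}^n R_i$ with each $R_i$ connected; this yields a complete family of orthogonal idempotents $1_X = p_1 + \cdots + p_n$ whose corners $p_i\End(\Id_X)p_i \cong R_i$ are connected. I would peel off one summand at a time by induction on $n$. For the inductive step, apply weak block completeness to $1_X = p_1 + (p_2 + \cdots + p_n)$ to obtain $X \simeq X_1 \oplus X'$ inducing this decomposition. The structural isomorphism $\End(\Id_X) \cong \End(\Id_{X_1}) \times \End(\Id_{X'})$, together with the compatibility sending $p_1 \mapsto (1,0)$, identifies $\End(\Id_{X_1})$ with the corner $p_1\End(\Id_X)p_1 \cong R_1$, which is connected, so $X_1$ is strongly indecomposable; and it identifies $\End(\Id_{X'})$ with $\prod_{i\geq 2} R_i$, a semiconnected ring with one fewer connected factor. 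The induction hypothesis then decomposes $X'$, and hence $X$, into strongly indecomposables. The base cases $n = 0$ (the zero ring, so $X \simeq 0$ is the empty direct sum) and $n = 1$ ($X$ already strongly indecomposable) are immediate.

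The main obstacle is not the commutative-algebra bookkeeping: semiconnectedness as a finite product of connected rings, and the classification of idempotents in such products as subset-indicators, are routine once $\End(\Id_X)$ is known to be commutative (by Eckmann--Hilton, as noted in the text). The delicate part is establishing and then consistently using the compatibility between three things: object-level direct sum decompositions, the product decomposition of the $2$-cell endomorphism ring, and the notion of an \emph{induced} idempotent decomposition from \Cref{Def:bc}. Getting this dictionary exactly right --- in particular checking that regrouping the summands of a fixed Krull--Schmidt decomposition realizes \emph{every} subset-indicator idempotent, and that splitting off an idempotent returns the expected complementary corner on the residual object --- is where the bicategorical coherence must be handled with care, and is the step on which both implications ultimately rest.
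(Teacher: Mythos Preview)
Your proof is correct and follows essentially the same route as the paper: both directions hinge on the ring isomorphism $\End(\Id_X)\cong\prod_i\End(\Id_{X_i})$ of \Cref{Rem:matrix-not}, the identification of idempotents in a finite product of connected rings with subset indicators, and an (implicit or explicit) induction using weak block completeness for the reverse implication. The only notable difference is that for the forward direction the paper invokes \Cref{Prop:idemp-char} to certify that the regrouped decomposition realizes the given idempotent, whereas you obtain this directly from the matrix description; your route is slightly more elementary here, since \Cref{Prop:idemp-char} is stronger than what is needed at this point.
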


Some comments are warranted. 

As far as we can see, our theory does not recover its classical counterpart because we cannot usefully view an additive category as an additive bicategory. Thus our definitions and results are just analogs rather than generalizations.

On its face, our 2-categorical definition of a strongly indecomposable object $X$ may appear to be unrelated to the original 1-categorical notion.
A direct 2-categorical analog of ``$\End(X)$ is local'' would be to require that whenever $F$ and $G$ are two 1-cells $X\to X$ which are not equivalences, then $F\oplus G\colon X\to X$ is not an equivalence either. But it turns out that the latter property is a consequence of our definition, and as soon as the category $\End (X)$ is idempotent complete (a mild hypothesis) the two become equivalent. 
In fact, all reasonable options we could think of become equivalent under the same hypothesis (\Cref{Cor:strongly-indec-X}). 

Obviously, in an additive 1-category endomorphism rings are rarely commutative and there are no 2-cell endomorphism rings. 
This might explain why passing to a higher-categorical setting actually appears to make indecomposables and the uniqueness of decompositions \emph{easier} to handle---insofar as semiconnected rings are easier than semiperfect ones.
It also makes examples easier to recognize, \eg we do not feel any pressing need to discuss higher versions of bi-chain conditions, finite length objects, etc. In order to recognize a Krull--Schmidt bicategory using our characterization, it suffices to ensure weak block completeness (which can always be implemented by \cite[Thm.\,A.7.23]{BalmerDellAmbrogio20}) and to have a reasonable grasp of the 2-cell Homs (\eg it suffices that they are not unreasonably large).

We thought it instructive to give two different proofs of the uniqueness theorem. 
The first proof (in \Cref{sec:KS}), which seems not to have any 1-categorical analog, easily follows from two facts. Firstly, that a direct summand $Y$ of $X$ is determined up to equivalence by the idempotent element of $\End(\Id_X)$ corresponding to~$1_{Y}$; note that, by definition, the summand is strongly indecomposable precisely when the idempotent is primitive. Secondly, that in a commutative ring there cannot be two different decompositions of~$1$ as a sum of orthogonal primitive idempotents. 
The second proof (in \Cref{sec:bc}) is a calque of the usual 1-categorical proof, and proceeds by recursively comparing summands of two decompositions of the same object via the given equivalences. 
Both proofs are easy in principle but require a close inspection of the bicategorical notion of ``direct summand'', which we carry out in \Cref{sec:dirsums}.
To wit, in weakly block complete bicategories we characterize a direct summand $Y$ of $X$ in a few equivalent ways: as a special kind of ambijunction $Y \leftrightarrows X$, or (forgetting the object~$Y$) as a special kind of Frobenius monad on~$X$, or quite magically (forgetting the 1-cell $X\to X$) as an idempotent in $\End(\Id_X)$.

\begin{Ack*}
I am grateful to Alexis Virelizier for pointing me to \cite{DouglasReutter18pp} and to Paul Balmer and an anonymous referee for useful comments.
\end{Ack*}

\section{Preliminaries on direct sums}
\label{sec:prelims} %

Most of these preliminaries are also covered in \cite[\S A.7]{BalmerDellAmbrogio20}, where the reader may find more details and context.

\begin{Conv}
We will use the standard terminology for bicategories, as recalled \eg in \cite[App.\,A]{BalmerDellAmbrogio20}.
However, whenever convenient and without further mention we will pretend our bicategories are strict 2-categories in order to simplify calculations and to more easily talk about (internal) monads; this is justified by the coherence theorem for bicategories (\cite[Ch.\,8]{JohnsonYau21}).
If $\cat B$ is a bicategory, we write $\cat B(X,Y)$ for its Hom categories and $\End_{\cat B(X,Y)}(F)$ or just $\End_\cat B(F)$ or $\End(F)$ for the 2-cell endomorphism set of a 1-cell $F\colon X\to Y$. Internally to a bicategory, $\cong$ and $\simeq$ stand for isomorphism and equivalence, respectively; $\Id_X$ and $\id_F$ for identity 1-cells and identity 2-cells, respectively; and also $1_X$ or just $1$ is short for~$\id_{\Id_X}$.
Products, coproducts, initial and final objects in a bicategory are understood in the sense of \emph{pseudo}\-(co)\-limits (a.k.a.\ bilimits \cite[\S5]{JohnsonYau21}), rather than strict ones; in particular, they are only unique up to equivalence rather than isomorphism.
We mostly ignore set-theoretical size issues as irrelevant to this topic.
\end{Conv}

\begin{Def}
A bicategory $\cat B$ is \emph{locally additive} if each Hom category $\cat B(X,Y)$ is additive and each composition functor $\cat B(Y,Z)\times \cat B(X,Y)\to \cat B(X,Z)$ is additive in both variables.
\end{Def}

\begin{Def} \label{Def:dir-sum}
Let $\cat B$ be a locally additive bicategory.
A \emph{direct sum} of two objects $X_1$ and $X_2$ is a diagram of 1-cells of $\cat B$ of the form 
\begin{equation} \label{eq:dir-sum-diag}
\xymatrix{
X_1 
 \ar@<.5ex>[r]^-{I_1} & 
X 
 \ar@<.5ex>[l]^-{P_1} 
 \ar@<-.5ex>[r]_-{P_2} &
X_2 
 \ar@<-.5ex>[l]_-{I_2}
}
\end{equation}
such that there exist isomorphisms 
$P_iI_i \cong \Id_{X_i}$ and $P_iI_j \cong 0$ (for $i\neq j$) and $I_1P_1 \oplus I_2 P_2 \cong \Id_X$, the latter two being a zero object and a  direct sum in the additive categories $\Hom_\cat B(X_j,X_i)$ and $\End_\cat B(X)$, respectively.
As usual, we will write $X_1 \oplus X_2$ to indicate the object $X$ or even the whole diagram; 
direct sums of $n\geq 2$ objects are defined similarly; and the direct sum of no objects is by definition a zero objet of~$\cat B$, \ie one which is both initial and final.
\end{Def}

\begin{Def} \label{Def:add-bicat}
We say that a bicategory is \emph{additive} if it is locally additive and if it admits arbitrary finite direct sums of its objects.
\end{Def}

\begin{Exa} \label{Exa:ADD}
The (very large) 2-category $\ADD$ of (non-necessarily small) additive categories, additive functors and natural transformations is an additive bicategory. 
Direct sums are provided by the (strict) finite Cartesian products of categories, with their projection functors $P_i\colon \cat A_1 \times \cat A_2\to \cat A_i$ and with inclusion functors defined by $I_1\colon A_1\mapsto (A_1,0)$ and $I_2\colon A_2 \mapsto (0,A_2)$.

\end{Exa}

\begin{Def} \label{Def:indec}
An object $X$ of an additive bicategory $\cat B$ is \emph{indecomposable} if whenever there is an equivalence $X\simeq X_1 \oplus X_2$ then $X_1\simeq 0$ or $X_2 \simeq 0$.
\end{Def}

\begin{Rem}  \label{Rem:matrix-not}
One easily checks that any direct sum as in \Cref{Def:dir-sum} is also a product $(X,P_1,P_2)$ and a coproduct $(X,I_1, I_2)$ of $X_1$ and~$X_2$ (and similarly with $n\geq 2$ factors). 
It follows that direct sums are unique up to equivalence. 
Also, every direct sum decomposition $X\simeq X_1\oplus X_2$ induces equivalences of additive categories
\begin{equation} \label{eq:decomp-cats}
\cat B(Y, X) \simeq \cat B(Y,X_1) \times \cat B(Y, X_2)
\quad \textrm{ and } \quad
\cat B(X,Y) \simeq \cat B(X_1,Y) \times \cat B(X_2,Y)
\end{equation}
for all objects~$Y$. 
It is therefore possible to use matrix notation in the usual way for 1-cells as well as 2-cells between two directs sums, just as one does inside each additive Hom category.
In particular (\cf \cite[Rem.\,A.7.15]{BalmerDellAmbrogio20}), we may decompose the category $\End_\cat B(X)$ into four factors.
Then $\Id_X$ corresponds to the diagonal matrix $\mathrm{diag}(\Id_{X_1}, \Id_{X_2})$ and its endomorphism ring $\End_{\cat B(X,X)}(\Id_X)$ is also diagonal (\cf \Cref{Lem:comm-matrices} if necessary), so that we get an isomorphism of commutative rings
\[
\End_{\cat B}(\Id_X) \cong \End_\cat B(\Id_{X_1}) \times \End_\cat B(\Id_{X_2}).
\]
The latter is equivalent to a decomposition of $1_X= \id_{\Id_X}$ as a sum $e_1+e_2$ of two orthogonal idempotents, with $e_i$ corresponding to~$1_{X_i}$.
Given such a decomposition of~$1_X$, however, nothing guarantees that it arises as above from a direct sum decomposition $X\simeq X_1\oplus X_2$. Whence:
\end{Rem}

\begin{Def} \label{Def:bc}
An additive bicategory is \emph{weakly block complete} if for every object $X$ and every idempotent element $e=e^2$ of the ring $\End_\cat B(\Id_X)$ there exists a direct sum decomposition $X\simeq X_1\oplus X_2$ under which the 2-cells $e$ and $1_X-e$ correspond to $[\,{}^1_0\,{}^0_0\,]$ and $[\,{}^0_0\,{}^0_1\,]$, respectively, as in \Cref{Rem:matrix-not}.
As in \cite{BalmerDellAmbrogio20}, we say that $\cat B$ is \emph{block complete} if moreover every Hom category of $\cat B$ is idempotent complete.
\end{Def}

\begin{Thm} [{See \cite[A.7.23--24]{BalmerDellAmbrogio20}}]
\label{Thm:bc}
For every additive bicategory $\cat B$, there exists a block complete bicategory $\cat B^\flat$ and a 2-fully faithful pseudofunctor $\cat B\hookrightarrow \cat B^\flat$ which is 2-universal among additive pseudofunctors to block complete bicategories. 
\end{Thm}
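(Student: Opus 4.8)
The plan is to build $\cat B^\flat$ by two successive Cauchy-style completions, realizing it as the bicategorical reflection of $\cat B$ onto block complete bicategories, and to obtain the universal property by composing the two reflections. \emph{First} I would split idempotent 2-cells. Replace each Hom category $\cat B(X,Y)$ by its idempotent completion $\cat B(X,Y)^\natural$ while keeping the same objects. Since idempotent completion is functorial and preserves additivity, and the composition bifunctors are additive in each variable, they extend to the completions; transporting the associators and unitors yields a locally idempotent complete bicategory $\cat B^\natural$ with a 2-fully faithful pseudofunctor $\cat B \hook \cat B^\natural$. Note that $\Id_X$ survives as an object of $\cat B(X,X)^\natural$ and that $\End_{\cat B^\natural}(\Id_X) = \End_{\cat B}(\Id_X)$, so local completion does not disturb the rings governing block completeness. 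A Hom-wise application of the universal property of idempotent completion, together with coherence, shows that $\cat B \hook \cat B^\natural$ is 2-universal among additive pseudofunctors into locally idempotent complete bicategories.

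\emph{Second} I would split object-level idempotents. Let $\cat B^\flat$ have as objects the pairs $(X,e)$ with $X$ an object of $\cat B^\natural$ and $e = e^2 \in \End_{\cat B^\natural}(\Id_X)$ an idempotent 2-cell, thinking of $(X,e)$ as ``the summand of $X$ cut out by $e$''. Using that $\cat B^\natural$ is locally idempotent complete, split $e$ in $\cat B^\natural(X,X)$ and declare the resulting 1-cell $E\colon X\to X$ to be $\Id_{(X,e)}$. Define the Hom category $\cat B^\flat((X,e),(Y,f))$ to be the retract of $\cat B^\natural(X,Y)$ cut out by the idempotent endofunctor $F \mapsto f * F * e$ (whiskering by $e$ and $f$); concretely its 2-cells $\alpha$ are those fixed by this idempotent, so that the morphisms genuinely ``live on the summands''. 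Composition is inherited from $\cat B^\natural$, and the relations $e*e=e$, $f*f=f$ make it well defined on the retracts. The embedding is $X\mapsto (X,1_X)$, under which $\cat B^\flat((X,1_X),(Y,1_Y)) \simeq \cat B^\natural(X,Y)$, so it is again 2-fully faithful. Direct sums are computed by $(X,e)\oplus(Y,f) = (X\oplus Y,\, e\oplus f)$ using the matrix description of \Cref{Rem:matrix-not}, and one checks $(X,e)\oplus(X,1_X-e) \simeq (X,1_X) = X$, so that the formal splitting is realized by an honest direct sum decomposition.

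I would then verify that $\cat B^\flat$ is block complete. Its Hom categories are retracts of the idempotent complete categories $\cat B^\natural(X,Y)$, hence idempotent complete. For weak block completeness, observe that $\End_{\cat B^\flat}(\Id_{(X,e)})$ is isomorphic to the corner ring $e\,\End_{\cat B^\natural}(\Id_X)\,e$; an idempotent there is an idempotent $e'$ of $\End(\Id_X)$ with $e'e = ee' = e'$, and the objects $(X,e')$ and $(X,e-e')$ provide the direct sum decomposition required by \Cref{Def:bc}. Thus every idempotent of an identity-2-cell ring already splits at the level of objects, and the two completions together produce a genuinely block complete bicategory.

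Finally, the universal property. Given an additive pseudofunctor $\Phi\colon \cat B \to \cat C$ with $\cat C$ block complete, I would extend it to $\Phi^\flat\colon \cat B^\flat \to \cat C$ by sending $(X,e)$ to the summand of $\Phi(X)$ determined by the idempotent $\Phi(e) \in \End_{\cat C}(\Id_{\Phi X})$ (transported along the unit coherence isomorphism $\Phi(\Id_X)\cong\Id_{\Phi X}$), which exists precisely because $\cat C$ is weakly block complete, and by inducing the action on 1- and 2-cells through the resulting decompositions in $\cat C$. The splittings are unique up to coherent equivalence, and assembling them into a pseudofunctor, checking that $\Phi^\flat$ restricts to $\Phi$ along the embedding, and proving that precomposition with $\cat B \hook \cat B^\flat$ induces an equivalence between the bicategory of pseudofunctors $\cat B^\flat \to \cat C$ and the bicategory of additive pseudofunctors $\cat B \to \cat C$ is where the real work lies. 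The main obstacle is exactly this last step: not the tedious-but-routine verification of the pentagon and triangle axioms for $\cat B^\flat$ (whose identities are now twisted, being images of the idempotents $e$), but the bookkeeping of the \emph{2-dimensional} universal property, where one must match data at all three levels---pseudofunctors, pseudonatural transformations, and modifications---and show that the essentially unique choices of idempotent splittings in $\cat C$ are natural enough to glue into the required equivalence.
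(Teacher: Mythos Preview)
The paper does not give a proof of this theorem: it is stated with a citation to \cite[A.7.23--24]{BalmerDellAmbrogio20} and no proof environment follows. There is therefore no ``paper's own proof'' to compare your attempt against.

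That said, your outline is the expected one and matches the construction in the cited reference: first complete each Hom category under idempotents, then formally adjoin a summand $(X,e)$ for every idempotent $e\in\End(\Id_X)$, with Hom categories cut out by the whiskering idempotent $F\mapsto f*F*e$ (which splits because you have already made the Homs idempotent complete). Your identification $\End_{\cat B^\flat}(\Id_{(X,e)})\cong e\,\End(\Id_X)$ and the verification that $(X,e)\oplus(X,1_X-e)\simeq X$ are the key points confirming block completeness, and you have them. You are also honest that what you have written is a sketch: the verification of the bicategorical coherence axioms for $\cat B^\flat$ and, especially, the full 2-universal property (equivalence of pseudofunctor bicategories, not merely existence of an extension) are genuinely laborious and are only gestured at here. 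That is not a mathematical gap so much as an acknowledged omission; the cited reference carries out these details.
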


\begin{Rem} \label{Rem:wbc}
Using \Cref{Thm:bc}, we can also produce a similarly universal \emph{weak block completion} $\cat B^{w\flat}$ for any additive bicategory~$\cat B$: 
It suffices to take the smallest 2-full sub-bicategory of $\cat B^\flat$ which contains (the image of)~$\cat B$ and is weakly block complete. It will have the same objects as $\cat B^\flat$ but may lack some of its 1-cells.
\end{Rem}

In \Cref{sec:exas} we will need the fact that block completeness and weak block completeness are inherited pointwise, in the sense that bicategories of pseudofunctors inherit these properties from their target bicategory. More precisely:

\begin{Ter} \label{Ter:PsFun}
Let $\kk$ be a commutative ring.  
A bicategory is \emph{$\kk$-linear} if its Hom categories and composition functors are $\kk$-linear (\ie enriched in $\kk$-modules). 
For instance, a locally additive bicategory is a $\mathbb Z$-linear bicategory which also admits finite direct sums of parallel 1-cells.
A pseudofunctor $\cat F\colon \cat B \to \cat C$ between $\kk$-linear bicategories is \emph{$\kk$-linear} if each functor $\cat F\colon \cat B(X,Y)\to \cat C(\cat FX, \cat FY)$ is $\kk$-linear, \ie preserves $\kk$-linear combinations of 2-cells.
Note that a $\kk$-linear pseudofunctor preserves whatever direct sums of 1-cells and objects exist in its source, as the latter are equationally defined (\cf \cite[Prop.\,A.7.14]{BalmerDellAmbrogio20}).
We will denote by
\[
\PsFun_\kk(\cat B, \cat C)
\]
the bicategory of $\kk$-linear pseudofunctors $\cat B\to \cat C$, pseudonatural transformations between them, and modifications;
see \eg \cite[\S A.1]{BalmerDellAmbrogio20} for details.
\end{Ter}

\begin{Prop} \label{Prop:levelwise-wbc}
If $\cat C$ is (weakly) block complete then so is $\PsFun_\kk(\cat B, \cat C)$.
\end{Prop}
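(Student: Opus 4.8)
The plan is to carry out every construction objectwise, applying the (weak) block completeness of the target $\cat C$ one object of $\cat B$ at a time, and then to glue the resulting data into a genuine pseudofunctor (respectively, pseudonatural transformation). The modification axioms will be exactly what is needed to make this gluing coherent, so the proof is really a book-keeping argument built on a single naturality computation.

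First I would unwind what weak block completeness asks of $\cat D := \PsFun_\kk(\cat B, \cat C)$. An object of $\cat D$ is a $\kk$-linear pseudofunctor $\cat F\colon \cat B\to\cat C$, and (pretending $\cat C$ is strict, as permitted by our conventions) an idempotent $e=e^2\in\End_{\cat D}(\Id_{\cat F})$ is a modification from $\Id_{\cat F}$ to itself, that is, a family $e=(e_X)_{X\in\cat B}$ of idempotents $e_X\in\End_\cat C(\Id_{\cat F X})$ satisfying the modification condition $\cat F f \ast e_X = e_Y \ast \cat F f$ (whiskerings, both $2$-cells $\cat F f\Rightarrow\cat F f$) for every $1$-cell $f\colon X\to Y$ of $\cat B$. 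Applying weak block completeness of $\cat C$ to each $e_X$ yields a direct sum decomposition $\cat F X\simeq (\cat F X)_1\oplus(\cat F X)_2$, with inclusion and projection $1$-cells $I_i^X, P_i^X$, under which $e_X$ becomes $\smat{1&0\\0&0}$ as in \Cref{Rem:matrix-not}. I set $\cat F_i(X):=(\cat F X)_i$ on objects.

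The technical heart is to promote this to pseudofunctors $\cat F_i$ and an equivalence $\cat F\simeq \cat F_1\oplus\cat F_2$ in $\cat D$. The crucial point is that the modification condition forces each $\cat F f$ to be block diagonal: writing $\cat F f\cong\bigoplus_{i,j} I_i^Y P_i^Y\,\cat F f\,I_j^X P_j^X$, the idempotent $e_Y\ast\cat F f$ is the projection onto the rows $i=1$ whereas $\cat F f\ast e_X$ is the projection onto the columns $j=1$; restricting the equality of these two idempotents to the off-diagonal summands forces $P_1^Y\cat F f I_2^X\cong 0$ and $P_2^Y\cat F f I_1^X\cong 0$. I then define $\cat F_i f:=P_i^Y\circ\cat F f\circ I_i^X$ and $\cat F_i\alpha:=P_i^Y\ast\cat F\alpha\ast I_i^X$ on $1$- and $2$-cells; these are $\kk$-linear since $\cat F$ is and whiskering is $\kk$-bilinear in $\cat C$. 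Block diagonality gives $\cat F f\circ I_i^X\cong I_i^Y\circ\cat F_i f$, from which the pseudofunctoriality isomorphisms $\cat F_i(g\circ f)\cong \cat F_i g\circ\cat F_i f$ and $\cat F_i(\Id_X)\cong\Id_{\cat F_i X}$ are obtained by restricting those of $\cat F$, their coherence axioms following from those of $\cat F$ by a diagram chase.

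Finally I would exhibit the direct sum. Using block diagonality once more, the families $(I_i^X)_X$ and $(P_i^X)_X$ assemble into pseudonatural transformations $\cat F_i\to\cat F$ and $\cat F\to\cat F_i$, and the identities $P_i^X I_i^X\cong\Id$, $P_i^X I_j^X\cong 0$ for $i\neq j$, and $I_1^X P_1^X\oplus I_2^X P_2^X\cong\Id_{\cat F X}$ all hold objectwise, hence hold as modifications; this realizes $\cat F\simeq\cat F_1\oplus\cat F_2$ with $e$ corresponding to $\smat{1&0\\0&0}$, proving weak block completeness. For the block complete case one argues one level down: an idempotent $2$-cell of $\cat D$ is a modification $\mu=(\mu_X)_X$ on a pseudonatural transformation $\sigma\colon\cat F\to\cat G$, with each $\mu_X$ idempotent in the idempotent complete category $\cat C(\cat F X,\cat G X)$; splitting each $\mu_X$ there and assembling the splittings---using the modification condition on $\mu$ exactly as above to verify pseudonaturality of the split transformation---shows every Hom category of $\cat D$ is idempotent complete. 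The main obstacle I anticipate is this assembly step: checking that the objectwise data genuinely satisfy the pseudofunctor coherence axioms and the pseudonaturality/modification axioms, rather than the mere existence of the pieces, which is immediate from the hypothesis on $\cat C$. The one conceptual input that makes the entire book-keeping go through is the block diagonality of $\cat F f$ forced by the naturality of $e$; everything else is a routine, if lengthy, coherence verification inherited from $\cat F$ and from $\cat C$.
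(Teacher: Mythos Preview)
Your proposal is correct and follows essentially the same pointwise strategy as the paper: split each $e_X$ in~$\cat C$, then assemble. If anything, you are more explicit than the paper about the one nontrivial point, namely that the modification condition $\cat F f\ast e_X = e_Y\ast\cat F f$ forces the off-diagonal blocks $P_1^Y\,\cat F f\,I_2^X$ and $P_2^Y\,\cat F f\,I_1^X$ to vanish; the paper sketches the assembly rather tersely and even defers justification to the characterizations of direct summands in the following section.
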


\begin{proof}
First note that $\PsFun_\kk(\cat B, \cat C)$ inherits a pointwise $\kk$-linear structure. Namely, suppose $\cat F_1,\cat F_2\colon \cat B\to \cat C$ are pseudofunctors, $t,s\colon \cat F_1\Rightarrow \cat F_2$ pseudonatural transformations, $M,M'\colon t\Rrightarrow s$ modifications, and $\lambda \in \kk$ a scalar. 
Then there is a modification $M + \lambda M'\colon t\Rrightarrow s$ whose component at each object $X\in \cat B$ is defined by setting $(M + \lambda M')_X:= M_X + \lambda M'_X $ in the $\kk$-module $\Hom_{\cat B(\cat F_1 X, \cat F_2X)}(t_X , s_X)$. 
Similarly, $\PsFun_\kk(\cat B, \cat C)$ inherits direct sums of 1-cells and objects from~$\cat C$: 
The components in $\cat C$ of the transformation $t \oplus s\colon \cat F_1 \Rightarrow \cat F_2$ are
\[
(t\oplus s)_X := t_X \oplus s_X \colon \cat F_1X \to \cat F_2X
\]
for all objects $X\in \cat B$ and the diagonal
\[
(t\oplus s)_F:= t_F \oplus s_F = \left(\, {}^{t_F}_0 \,\; {}^0_{s_F} \right) \colon \cat F_2F\circ (t\oplus s)_X \overset{\sim}{\Longrightarrow} (t \oplus s)_Y\circ \cat F_1 F
\]
for all 1-cells $F\colon X\to Y$ of~$\cat B$ (for the latter, we use that horizontal composition in $\cat C$ preserves direct sums of 1-cells), with the evident direct sum structure maps.

For the direct sum $\cat F_1 \oplus \cat F_2$ of pseudofunctors,  set $(\cat F_1 \oplus \cat F_2)(X):= \cat F_1 X \oplus \cat F_2 X$ on objects, with diagonal component $\kk$-linear functors (at each pair $X,Y\in \Obj\cat B$)
\[
\cat F_1 \oplus \cat F_2:= \left(\, {}^{\cat F_{1}}_0 \,\; {}^0_{\cat F_{2}} \right) \colon \cat B(X,Y) \longrightarrow \cat C(\cat F_1 X \oplus \cat F_2 X, \cat F_1 Y \oplus \cat F_2 Y),
\]
where the right-hand side is decomposed into four categories as in \Cref{Rem:matrix-not}. 

It remains to see that $\PsFun_\kk(\cat B, \cat C)$ also inherits the splitting of 1-cells and objects.
Suppose $M= M^2\colon t \Rrightarrow t$ is an idempotent modification. 
Then each component $M_X\colon t_X\Rightarrow t_X$ is an idempotent 2-cell of~$\cat C$, giving rise to a decomposition $t_X \cong \img(M_X) \oplus \img (\id_{t_X} - M_X)$ as soon as that idempotent splits in  $\cat C(\cat F_1X, \cat F_2X)$.
We can then assemble these splittings into an isomorphism $t \cong \img(M) \oplus \img (\id_t - M)$ in the additive category $\PsNat(\cat F_1, \cat F_2)$ of transformations and modifications.
Thus if $\cat C$ is locally idempotent complete so is $\PsFun_\kk(\cat B, \cat C)$.
If $\cat C$ is weakly block complete, in the special case when $t=\Id_{\cat F}$ is the identity transformation of $\cat F:= \cat F_1 = \cat F_2\colon \cat B\to \cat C$ we also have decompositions $\cat FX \simeq Y_{X} \oplus Z_{X}$ of objects of~$\cat C$ with $1_{Y_{X}}$ and $1_{Z_{X}}$ corresponding under these equivalences to the identity 2-cells of $\img(M_X)$ and $\img(1_{\cat FX} - M_X)$, respectively. 
Any choice of such adjoint equivalences assembles into the required decomposition $\Id_\cat F \simeq \img(M)\oplus \img (1_\cat F - M)$ in $\PsNat(\cat F_1, \cat F_2)$. (If necessary, to see why the latter assertion is true one may use the characterizations of direct summands treated in the next section.)
\end{proof}

\begin{Rem}
In \cite{DouglasReutter18pp} a notion of ``idempotent completion'' for linear 2-categories is used which in general is stronger than our  block completion: It requires every Hom category to be idempotent complete and every separable monad to split (\ie to arise from a separable adjunction). 
\end{Rem}

\section{Characterizations of direct summands}
\label{sec:dirsums} %

We work throughout in a locally additive bicategory.
Note that, when referring to a direct sum $X_1\oplus X_2$ as in~\eqref{eq:dir-sum-diag}, we were implicitly assuming a choice was made for its four projection and injection 1-cells, but not for the 2-isomorphisms implementing their relations.
We can do better:

\begin{Prop} \label{Prop:adj-sums}
Consider any direct sum $X= X_1 \oplus X_2$  as in~\eqref{eq:dir-sum-diag}.
Then the 2-cells implementing the direct sum relations can be chosen so that they simultaneously form four adjunctions $P_i \dashv I_i$ and $I_i\dashv P_i$ ($i= 1,2$).
More precisely,
write
\[
\xymatrix@L=5pt{
P_i I_i 
  \ar@<1ex>@{=>}[r]^-{\varepsilon_i}
  \ar@{}[r]|-{\sim} & 
\Id_{X_i}
  \ar@<1ex>@{=>}[l]^-{\overline{\eta}_i}
}
\quad
 \textrm{ and } 
\quad
\xymatrix@L=5pt{
I_1P_1 \oplus I_2 P_2
  \ar@<1ex>@{=>}[r]^-{
   \left[ \begin{array}{cc} \!\!\!\scriptstyle{\overline{\varepsilon}_1} \!\!&\!\! \scriptstyle{\overline{\varepsilon}_2} \!\!\!\!\! \end{array} \right] 
  }
  \ar@{}[r]|-{\sim} & 
\Id_X
  \ar@<1ex>@{=>}[l]^-{
  \left[
  \begin{array}{c}
  \!\!\!\scriptstyle{\eta_1}\!\!\!\!\! \\
  \!\!\!\scriptstyle{\eta_2}\!\!\!\!\!
  \end{array}
  \right]
  } ,
}
\]
for the 2-cell components; they can be chosen so that, besides satisfying the relations
\begin{equation} \label{eq:dir-sum-eqs}
\varepsilon_i = (\overline{\eta}_i)^{-1}
\quad\quad\quad
\eta_i \overline{\varepsilon}_j = 
	\left\{
	\begin{array}{ll}
	\id_{I_iP_i}  & \textrm{ if } i=j \\
	0 & \textrm{ if } i\neq j
	\end{array}
	\right.
\quad\quad\quad
\overline{\varepsilon}_1\eta_1 + \overline{\varepsilon}_2\eta_2 = 1_X 
\end{equation}
they also partake in the adjunctions (for $i=1,2$)
\[
(P_i \dashv I_i, \eta_i, \varepsilon_i)
\quad \textrm{ and } \quad
(I_i \dashv P_i, \overline{\eta}_i, \overline{\varepsilon}_i)
\]
as units or counits.
 The analogous statement holds for sums of $n\geq 2$ objects.
\end{Prop}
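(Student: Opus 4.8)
The plan is to install the easy part of the data first and then bootstrap it to genuine adjunctions. Choosing any isomorphisms $\varepsilon_i\colon P_iI_i\xrightarrow{\sim}\Id_{X_i}$ and setting $\overline{\eta}_i:=\varepsilon_i^{-1}$ takes care of the relation $\varepsilon_i=(\overline{\eta}_i)^{-1}$; choosing any isomorphism $\bigoplus_k I_kP_k\xrightarrow{\sim}\Id_X$ and reading off its inclusions $\overline{\varepsilon}_i\colon I_iP_i\Rightarrow\Id_X$ and the projections $\eta_i\colon\Id_X\Rightarrow I_iP_i$ of its inverse produces data satisfying the matrix-unit relations $\eta_i\overline{\varepsilon}_j=\delta_{ij}\,\id_{I_iP_i}$ and $\sum_i\overline{\varepsilon}_i\eta_i=1_X$ of \eqref{eq:dir-sum-eqs}. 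The whole content is then that these 2-cells can be \emph{re}chosen so as to also satisfy the triangle identities of the four would-be adjunctions. My first move is to show that all eight triangle identities collapse to a single family of compatibilities between the horizontal biproduct data and the vertical isomorphisms, namely
\[
(\diamond_i)\colon\ I_i\varepsilon_i=\overline{\varepsilon}_iI_i\ \colon\ I_iP_iI_i\Rightarrow I_i,\qquad\qquad (\diamond_i')\colon\ \varepsilon_iP_i=P_i\overline{\varepsilon}_i\ \colon\ P_iI_iP_i\Rightarrow P_i.
\]

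For the reduction I would exploit the vanishing forced by $P_iI_j\cong 0$ for $i\neq j$: since composition is additive it annihilates the zero 1-cell, so $I_kP_kI_i\cong 0$ and hence $\eta_kI_i=0$ for $k\neq i$, and dually $P_i\eta_k=0$. Right-whiskering $\sum_k\overline{\varepsilon}_k\eta_k=1_X$ by $I_i$ and discarding the vanishing terms yields $(\overline{\varepsilon}_iI_i)(\eta_iI_i)=\id_{I_i}$, while whiskering $\eta_i\overline{\varepsilon}_i=\id_{I_iP_i}$ gives $(\eta_iI_i)(\overline{\varepsilon}_iI_i)=\id_{I_iP_iI_i}$; thus $\overline{\varepsilon}_iI_i$ and $\eta_iI_i$ are mutually inverse isomorphisms, and likewise $P_i\overline{\varepsilon}_i,\,P_i\eta_i$ on the other side. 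Granting $(\diamond_i)$ and $\overline{\eta}_i=\varepsilon_i^{-1}$, the two triangle identities living on $I_i$ (for $P_i\dashv I_i$ and $I_i\dashv P_i$) reduce to $(\overline{\varepsilon}_iI_i)(\eta_iI_i)=\id_{I_i}$, which we just proved; symmetrically the two identities on $P_i$ reduce to $(\diamond_i')$ together with $(P_i\overline{\varepsilon}_i)(P_i\eta_i)=\id_{P_i}$. Finally $(\diamond_i)$ forces $(\diamond_i')$: using $(\varepsilon_iP_i)I_i=\varepsilon_i(P_iI_i)=(P_iI_i)\varepsilon_i$ (Eckmann--Hilton interchange) $=P_i(I_i\varepsilon_i)=(P_i\overline{\varepsilon}_i)I_i$, it remains to note that right-whiskering by $I_i$ is faithful on $\cat B(X,X_i)(P_iI_iP_i,P_i)$, a retraction being available precisely because $P_ie_i=\id_{P_i}$ for the idempotent $e_i:=\overline{\varepsilon}_i\eta_i$.

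This leaves the genuine difficulty: arranging $(\diamond_i)$ itself. I would keep $\varepsilon_i$ fixed and correct the completeness isomorphism by replacing $\overline{\varepsilon}_i\rightsquigarrow\overline{\varepsilon}_i\theta_i$ and $\eta_i\rightsquigarrow\theta_i^{-1}\eta_i$ for an automorphism $\theta_i$ of $I_iP_i$, which manifestly preserves the matrix-unit relations; $(\diamond_i)$ then demands $\theta_iI_i=(\overline{\varepsilon}_iI_i)^{-1}(I_i\varepsilon_i)$, i.e.\ that a prescribed automorphism of $I_iP_iI_i$ be the right-whisker by $I_i$ of some $\theta_i$. \textbf{I expect this de-whiskering to be the main obstacle:} a priori the prescribed automorphism interleaves a right-whisker $\eta_iI_i$ with a left-whisker $I_i\varepsilon_i$, and there is no formal reason a left-whisker should be a right-whisker. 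I would resolve it constructively by setting $\theta_i:=(I_i\varepsilon_iP_i)\circ\big((I_iP_i)\eta_i\big)$ and verifying $\theta_iI_i=(\overline{\varepsilon}_iI_i)^{-1}(I_i\varepsilon_i)$ through a middle-four-interchange zig-zag, the essential cancellations again being $P_ie_i=\id_{P_i}$ and $\eta_i\overline{\varepsilon}_i=\id_{I_iP_i}$. As a conceptual reassurance that no obstruction can survive, the biproduct is at once a product and a coproduct (\Cref{Rem:matrix-not}), and decomposing $\cat B(Z,X)(H,I_iK)\simeq\prod_j\cat B(Z,X_j)(P_jH,P_jI_iK)\simeq\cat B(Z,X_i)(P_iH,K)$ exhibits $P_i\dashv I_i$ abstractly; this guarantees that compatible 2-cells exist, so the explicit construction only has to pin them down.

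The case of $n\geq 2$ summands is identical, with $i,j,k$ ranging over $\{1,\dots,n\}$ throughout; no new phenomenon arises, since every step is indexwise and uses only $P_iI_j\cong\delta_{ij}\,\Id_{X_i}$ and $\sum_i\overline{\varepsilon}_i\eta_i=1_X$.
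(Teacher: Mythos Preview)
Your correction $\theta_i = (I_i\varepsilon_iP_i)\circ((I_iP_i)\eta_i)$ is exactly the paper's $\varphi_i$, and your reduction of the eight triangle identities to the compatibilities $(\diamond_i)$ and $(\diamond_i')$ is correct and clarifying. The one genuine gap is the step ``$\varepsilon_i(P_iI_i)=(P_iI_i)\varepsilon_i$ (Eckmann--Hilton interchange)'' in your deduction of $(\diamond_i')$ from~$(\diamond_i)$. The interchange law yields $(\alpha*\id)(\id*\beta)=(\id*\beta)(\alpha*\id)$, not $\alpha*\id=\id*\alpha$; the equality of the two whiskerings of $\varepsilon_i$ by $P_iI_i$ is precisely the idempotent-(co)monad condition you are trying to establish and is \emph{not} a formal consequence of the bicategory axioms. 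Your faithfulness-of-right-whiskering argument is fine; it is its input $(\varepsilon_iP_i)I_i=(P_i\overline{\varepsilon}_i)I_i$ that remains unproved.

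The paper sidesteps this: with the same $\varphi_i=\theta_i$ in hand, it verifies the triangle identity $(\varepsilon_iP_i)(P_i\eta_i)=\id_{P_i}$ for $P_i\dashv I_i$ directly by a short commutative diagram, the key input being the observation (which you also made) that $P_i\overline{\beta}_i$ and $P_i\alpha_i$ are mutually inverse already \emph{before} correction. The companion identity $(I_i\varepsilon_i)(\eta_iI_i)=\id_{I_i}$ is then obtained by noting that this composite is an \emph{invertible idempotent}, hence the identity: idempotency follows formally from the first triangle identity, and invertibility from the same whiskering observation. The adjunction $I_i\dashv P_i$ is handled symmetrically. In your framework this ``invertible idempotent'' trick gives a valid replacement for the spurious interchange step: from $(\diamond_i)$ you get one triangle identity on~$I_i$, whence $(\varepsilon_iP_i)(P_i\eta_i)$ is idempotent; since both factors are invertible it equals~$\id_{P_i}$, which is exactly~$(\diamond_i')$.
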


\begin{Def} \label{Def:adj-sum}
An \emph{adjoint direct sum} in a locally additive bicategory is the data of a direct sum diagram together with a choice of 2-cells as in \Cref{Prop:adj-sums}.
\end{Def}

\begin{Rem} \label{Rem:almost-isos}
Consider any direct sum diagram \eqref{eq:dir-sum-diag}, say with 2-cell components $\beta_i\colon P_iI_i\overset{\sim}{\Rightarrow} \Id_{X_i}$ whose inverse we will write~$\overline{\alpha}_i$, and $[\overline{\beta}_1 \;\; \overline{\beta}_2]\colon I_1P_1 \oplus I_2P_2\overset{\sim}{\Rightarrow} \Id_X$ with inverse ${}^t[\alpha_1 \;\; \alpha_2]$. 
These 2-cells will satisfy the direct sum relations as in~\eqref{eq:dir-sum-eqs}, of course. 
Moreover, not only are $\beta_i$ and $\overline{\alpha}_i$ mutually inverse, but we also have
\[
P_i \overline{\beta}_i = (P_i \alpha_i)^{-1} \colon P_i \overset{\sim}{\Rightarrow} P_i
\quad \textrm{ and } \quad
\overline{\beta}_i I_i = (\alpha_i I_i)^{-1}\colon I_i \overset{\sim}{\Rightarrow} I_i.
\]
Indeed, these 2-cell pairs are already mutual inverses on one side (even before applying $P_i$ or~$I_i$) by one of the defining relations.
To see that they are inverses on the other side too, it suffices to whisker the relation $\overline{\beta}_1\alpha_1 + \overline{\beta}_2\alpha_2 = 1_X$ by $P_i$ or~$I_i$, respectively, and use that $P_iI_j \cong 0$ for $i\neq j$.
We will often use this observation.
\end{Rem}

\begin{proof}[Proof of \Cref{Prop:adj-sums}]
This is a sharper version of \cite[Prop.\,1.1.3]{DouglasReutter18pp}, and the proof is similar.
Start with a direct sum diagram with some choice of 2-cells, with notation as in \Cref{Rem:almost-isos}.
For each~$i$, consider the automorphism 
\[
\varphi_i \colon 
\xymatrix@C=14pt{ 
I_iP_i
  \ar@{=>}[rr]^-{I_iP_i \alpha_i}_-\sim &&
I_iP_iI_iP_i 
  \ar@{=>}[rr]^-{I_i \beta_i P_i}_-\sim  &&
I_iP_i
}
\]
of~$I_iP_i$, whose inverse is $\varphi_i^{-1}= (I_iP_i \overline{\beta}_i)(I_i \overline{\alpha}_i P_i)$ by \Cref{Rem:almost-isos}.
We claim that the following slightly corrected 2-cells 
\[
\eta_i := \varphi_i^{-1} \alpha_i
\quad\quad
\varepsilon_i := \beta_i
\quad\quad
\overline{\eta}_i := \overline{\alpha}_i  
\quad\quad
\overline{\varepsilon}_i := \overline{\beta}_i \varphi_i
\]
satisfy the required conditions.
They again form a direct sum for the same 1-cells, indeed the relations \eqref{eq:dir-sum-eqs} are immediately checked. 
It remains to see that we have two adjunctions $(P_i \dashv I_i, \eta_i , \varepsilon_i)$ and $(I_i \dashv P_i, \overline{\eta}_i , \overline{\varepsilon}_i)$.
For the first adjunction, \Cref{Rem:almost-isos} and the commutative diagram on the left (where we dropped the~$i$'s)
\[
\xymatrix@C=14pt@R=16pt{
P
  \ar@/^3ex/@{=>}[rrrrd]^-{P\eta}
  \ar@{=>}[d]_{P\alpha} && &&  \\
PIP 
  \ar@{=>}[rr]^-{PI\overline{\alpha} P}
  \ar@{=>}[d]_{\beta P} &&
PIPIP
   \ar@{=>}[rr]^(.43){PIP\overline{\beta}}
   \ar@{=>}[d]_{\beta PIP} &&
PIP
  \ar@{=>}[d]_-{\varepsilon P} &&  \\
P 
 \ar@{=>}[rr]^-{\overline{\alpha} P} &&
PIP
  \ar@{=>}[rr]^-{P\overline{\beta}}  &&
P 
}
\!\!\!\!\!\!\!\!\!\! 
\xymatrix@C=14pt@R=16pt{
P
  \ar@{=>}[rr]^-{\overline{\eta} P}
  \ar@{=>}[d]_{P\alpha} &&
PIP 
  \ar@/^3ex/@{=>}[ddrr]^{P \overline{\varepsilon}} 
  \ar@{=>}[d]_{PIP \alpha}  && \\
PIP 
  \ar@{=>}[rr]^-{\overline{\alpha} PIP}
  \ar@{=>}[d]_{\beta P} &&
PIPIP
  \ar@{=>}[d]_{PI\beta P} && \\
P 
  \ar@{=>}[rr]^-{\overline{\alpha} P} &&
PIP
  \ar@{=>}[rr]^-{P \overline{\beta}} && 
P
}
\]
verify the triangular identity $(\varepsilon_i P_i)(P_i \eta_i)=\id_{P_i}$. 
This also implies $(I_i \varepsilon_i)(\eta_i I_i) = (I_i \varepsilon_i)(I_i \varepsilon_i P_iI_i)(I_i P_i \eta_i I_i)(\eta_i I_i) = (I_i \varepsilon_i)(\eta_i I_i)(I_i \varepsilon_i)(\eta_i I_i)$ is an idempotent $I_i\Rightarrow I_i$. 
As the latter 2-cell is invertible (by \Cref{Rem:almost-isos}) it must be equal to~$\id_{I_i}$, and we get the second triangular identity.
For the second adjunction, the above diagram on the right displays (again thanks to \Cref{Rem:almost-isos}) the triangular identity $(P_i \overline{\varepsilon}_i)(\overline{\eta}_i P_i)= \id_{P_i} $, and the other one follows by the precise same argument as before.
\end{proof}

\begin{Rem} \label{Rem:yoneda-proof}
There is a far less elementary but perhaps more suggestive proof of \Cref{Prop:adj-sums}, which goes as follows. 
Firstly note that the 2-category $\ADD$ of additive categories has a canonical choice of \emph{adjoint} direct sums: Just take Cartesian products, with the projections $P_i$ and injections $I_i$ as in \Cref{Exa:ADD} and the evident 2-cells (involving solely identity and zero natural transformations). Indeed, in this case we even have $P_iI_i = \Id$ and all required triangular identities are trivially true. 
Secondly, for any locally additive (or just $\mathbb Z$-linear) bicategory~$\cat B$, the bicategory $\PsFun_\mathbb Z(\cat B^\op,\ADD)$ of additive pseudofunctors (\Cref{Ter:PsFun}) inherits finite adjoint direct sums  from $\ADD$, by the very same level-wise construction as in the proof of \Cref{Prop:levelwise-wbc}.
Thirdly, observe that adjoint direct sums (or even just their 2-cells) can be transported along (adjoint) equivalences. 
Finally, recall (\eg from \cite[Rem.\,A.7.25]{BalmerDellAmbrogio20}) that for any locally additive bicategory~$\cat B$ there is a Yoneda embedding $\cat B\to \PsFun_\mathbb Z (\cat B^\op, \ADD)$ which is an additive pseudofunctor and a biequivalence $F\colon \cat B\overset{\sim}{\to}\cat B'$ onto its full image~$\cat B' \subset \PsFun_\mathbb Z (\cat B^\op, \ADD)$. 
Using $F$ and a pseudo\-inverse~$F^{-1}$, any direct sum diagram that exists in $\cat B$ can therefore be corrected into an adjoint sum by the previous remarks.
\end{Rem}

In the following, we will tacitly assume all our direct sums to be adjoint in the sense of \Cref{Def:adj-sum}.
Let us now study a single summand at a time.

\begin{Def} \label{Ter:dirsum}
In a locally additive bicategory, a \emph{splitting datum} of an object~$X$ consists of an object~$Y$, two 1-cells $I\colon Y\to X$ and $P\colon X\to Y$ and two adjunctions
\[
(P \dashv I, \eta, \varepsilon)
\quad \textrm{ and } \quad
(I \dashv P, \overline{\eta}, \overline{\varepsilon})
\]
such that $\varepsilon = \overline{\eta}^{-1}$ and $\eta \overline{\varepsilon}=\id_{IP}$.
A splitting datum for $X$ is a \emph{direct summand of~$X$} if it appears as one of the ambijunctions $I_i\dashv P_i\dashv I_i\colon X_i\leftrightarrows X$ for some decomposition into an adjoint direct sum $X\simeq X_1\oplus \cdots \oplus X_n$ as in \Cref{Def:adj-sum}. 
A \emph{morphism} of splitting data $(Y,I,P,\eta,\varepsilon,\overline{\eta},\overline{\varepsilon})$ and $(Y',I',P',\eta',\varepsilon',\overline{\eta}',\overline{\varepsilon}')$ is a 1-cell $F\colon Y\to Y'$ together with a 2-isomorphism $\theta_F\colon FP\cong P'$, and a \emph{2-morphism} $(F_1,\theta_1)\Rightarrow (F_2,\theta_2)$ is a 2-cell $\alpha \colon F_1\Rightarrow F_2$ such that $\theta_2 (\alpha P)= \theta_1$.
(In other words, forgetting some structure, here we are considering splitting data as objects of the comma bicategory of $\cat B$ under~$X$.)
We thus obtain a notion of \emph{equivalence} of splitting data for~$X$.
\end{Def}

In the following series of remarks, we mention several ways one can slice and repackage the information contained in a splitting datum. 

\begin{Rem}  \label{Rem:refl}
An adjunction in a bicategory is a \emph{reflection} if its counit  is invertible and a \emph{coreflection} if its unit  is.
Thus the adjunctions $(P \dashv I, \eta, \varepsilon)$ and $(I \dashv P, \overline{\eta}, \overline{\varepsilon})$ in \Cref{Ter:dirsum} display $Y$ as a reflection, respectively a coreflection, of~$X$. 
Indeed the counit $\varepsilon$ and the unit $\overline{\eta}$ are mutually inverse, and we also have $\eta\overline{\varepsilon} = \id_{IP}$.
\end{Rem}

\begin{Rem} \label{Rem:monad}
Recall that a monad or comonad on an object $X$ in a bicategory is \emph{idempotent} if its (co)multiplication is invertible.
Thus writing $E := I P\colon X\to X$, we see that a splitting datum induces both a monad and a comonad on~$X$
\[
\big( E,  \xymatrix{ E^2 \ar@{=>}[r]_-\sim^-{I \varepsilon P} & E } , \xymatrix{ \Id_X \ar@{=>}[r]^-{\eta} & E } \big) 
\quad \textrm{ and } \quad
\big( E, \xymatrix{ E \ar@{=>}[r]_-\sim^-{I \overline{\eta} P } & E^2 } , \xymatrix{ E \ar@{=>}[r]^-{\overline{\varepsilon}} & \Id_X} \big)
\]
with the same underlying 1-cell~$E$ which are idempotent and satisfy $\eta\overline{\varepsilon}=\id_{E}$.
\end{Rem}

\begin{Rem} \label{Rem:loc}
An idempotent monad $(T\colon X\to X, \mu\colon T^2\overset{\sim}{\Rightarrow} T, \iota\colon \Id_X \Rightarrow T)$ on an object~$X$ is the same thing as a \emph{localization} of~$X$, namely a pair $(T,\iota\colon \Id_X\Rightarrow T)$ such that $T\eta$ and $\eta T\colon T \Rightarrow T^2$ are equal and invertible (their common inverse then recovers the multiplication~$\mu$).
Dually, an idempotent comonad is the same as a \emph{colocalization}. 
Thus \Cref{Rem:monad} says that a splitting datum amounts to a  pair
\[
(E , \eta \colon \Id_X \Rightarrow E ) 
\quad \textrm{ and } \quad
(E , \overline{\varepsilon} \colon E \Rightarrow \Id_X)
\]
of a localization and a colocalization of~$X$, respectively, with the same underlying 1-cell~$E= IP$ and such that $\overline{\varepsilon}$ is a section of~$\eta$.
\end{Rem}

\begin{Rem} \label{Rem:special-Frob}
The monad and comonad of \Cref{Rem:monad} form an \emph{idempotent special Frobenius monad} $(T, \mu, \iota, \delta, \epsilon)$ on~$X$ such that~$\iota \epsilon = \id_{T}$. 
Indeed, the latter equation is $\eta \overline{\varepsilon} = \id_{E}$ again; 
the monad and comonad automatically form a Frobenius monad as they originate from an ambijunction (see \eg \cite[Prop.\,7.4]{DellAmbrogio21pp}); 
 ``idempotent'' refers to multiplication~$\mu$ and comultiplication~$\delta$ being invertible;
 and ``special'' refers to the equation $\mu\delta=\id_E$, which holds since $\mu =(I \varepsilon P) = (I \overline{\eta} P)^{-1} = \delta^{-1}$.
\end{Rem}

\begin{Def} \label{Def:assoc-idemp}
For any splitting datum $(Y,I,P,\eta,\varepsilon,\overline{\eta},\overline{\varepsilon})$ for~$X$, the composite 2-cell $e := \overline{\varepsilon} \eta\colon \Id_X \Rightarrow \Id_X$ is an idempotent element of the commutative ring $\End_\cat B(\Id_X)$.
We call it \emph{the idempotent associated} to the given splitting datum.
Note that the idempotents $e_i$ associated to the direct summands of a direct sum decomposition $X\simeq X_1 \oplus \cdots \oplus X_n$ are orthogonal and their sum is~$1_X$.
\end{Def}

Our last goal for this section is to show that a splitting datum is determined up to equivalence by its associated idempotent (\Cref{Prop:idemp-char}).
Let us first recall that the ``image'' of a localization is similarly uniquely, essentially because it is both the Kleisli object and the Eilenberg--Moore object of the associated monad.

\begin{Lem} \label{Lem:EM-UP}
Let $I\colon Y \leftrightarrows X \!:  P$ and $I'\colon Y' \leftrightarrows X \!:  P'$ be two reflections of an object $X$ in a bicategory~$\cat B$, as in \Cref{Rem:refl}. Let $(T,\iota)$ and $(T',\iota')$ be the associated localizations of~$X$, as in  \Cref{Rem:loc}.
There is an isomorphism $T\cong T'$ identifying $\iota$ with~$\iota'$ if and only if there is an equivalence $Y\simeq Y'$ identifying the two adjunctions.
\end{Lem}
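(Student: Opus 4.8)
The statement has an easy direction and a substantial one; the content lies in the ``only if'' direction, which I would settle by an explicit construction of the equivalence, the conceptual backbone being that $Y$ is the Eilenberg--Moore (equivalently, since $T$ is idempotent, the Kleisli) object of~$T$. First recall the translation supplied by \Cref{Rem:loc} and \Cref{Ter:dirsum}: here $T=IP$ with $\iota=\eta$, and $T'=I'P'$ with $\iota'=\eta'$, while the counits $\varepsilon\colon PI\overset\sim\Rightarrow\Id_Y$ and $\varepsilon'\colon P'I'\overset\sim\Rightarrow\Id_{Y'}$ are invertible precisely because we are dealing with reflections (\Cref{Rem:refl}). The invertibility of these counits is the feature I intend to exploit throughout.

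For the ``if'' direction, an equivalence $F\colon Y\to Y'$ identifying the adjunctions supplies in particular 2-isomorphisms $FP\cong P'$ and $I\cong I'F$; composing them gives $T=IP\cong I'FP\cong I'P'=T'$, and the compatibility of $F$ with the units forces this isomorphism to carry $\iota=\eta$ to $\iota'=\eta'$.

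For the ``only if'' direction, suppose given $\phi\colon T\overset\cong\Rightarrow T'$ with $\phi\eta=\eta'$. I would take as candidate equivalence
\[ F:=P'I\colon Y\to Y' \qquad\text{with quasi-inverse}\qquad G:=PI'\colon Y'\to Y. \]
To see $GF\cong\Id_Y$, rewrite $GF=P\,(I'P')\,I=P\,T'\,I$ and whisker $\phi^{-1}$ by $P$ and $I$ to obtain $PT'I\cong P(IP)I=(PI)(PI)$, which is $\cong\Id_Y$ by two applications of the invertible counit~$\varepsilon$. Symmetrically $FG=P'\,(IP)\,I'=P'TI'\cong P'T'I'=(P'I')(P'I')\cong\Id_{Y'}$ using $\phi$ and~$\varepsilon'$. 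Hence $F$ is an equivalence. The structure 2-isomorphism $\theta_F\colon FP\cong P'$ of \Cref{Ter:dirsum} is then produced as $FP=P'(IP)=P'T\cong P'T'=P'I'P'\cong P'$ (whiskering $\phi$ by $P'$, then applying $\varepsilon'$), and dually one gets $I'F\cong I$ from $\phi^{-1}$ and~$\varepsilon$.

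The main obstacle is not these underlying 1-cell isomorphisms but the coherence bookkeeping: one must verify that $\theta_F$ is compatible with the units and counits, so that $F$ is a genuine morphism of reflections carrying $\eta$ to $\eta'$ (and $\varepsilon$ to $\varepsilon'$), and that $G$ witnesses an equivalence of this \emph{structured} data rather than a bare equivalence of objects. Since every 2-cell in sight is assembled from $\phi$ (which satisfies $\phi\eta=\eta'$) and the invertible counits, each such verification is a finite diagram chase. I would organize these chases around the observation that, for a localization, the multiplication $\mu=(\iota T)^{-1}=(T\iota)^{-1}$ is determined by the unit alone, so that $\phi$ is automatically a morphism of monads; with this in hand the entire statement also follows abstractly from the universal property of the Eilenberg--Moore object, which is exactly what guarantees the explicit 2-cells above cohere.
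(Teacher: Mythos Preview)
Your proposal is correct and follows essentially the same route as the paper: both construct the candidate equivalence as $F=P'I$ with quasi-inverse $G=PI'$, and both use the invertibility of the counits together with the hypothesis $\phi\eta=\eta'$ to produce the required 2-isomorphisms $GF\cong\Id_Y$ and $FG\cong\Id_{Y'}$. The paper carries out one triangular identity explicitly via a commutative diagram, whereas you relegate the coherence verifications to ``finite diagram chases''; this is a difference of presentation rather than of strategy.
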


\begin{proof}
Write $\eta,\varepsilon$ and $\eta',\varepsilon'$ for the unit and counit of $P\dashv I$ and $P'\dashv I'$, respectively.
By hypothesis $\varepsilon$ and $\varepsilon'$ are invertible, and suppose that $(T,\iota)\cong (T',\iota')$, \ie that there is an invertible 2-cell $\theta \colon IP \cong I'P'$ such that $\theta \eta = \eta' \colon \Id_X \Rightarrow I'P'$.
We claim that  the composites $F:= P'I\colon Y\to Y'$ and $G:= PI' \colon Y'\to Y$ are part of an adjoint equivalence $Y\simeq Y'$ in~$\cat B$.

Indeed, from the triangular identity $(\varepsilon P)(P \eta)=\id_P$ and the invertibility of $\varepsilon$ we deduce that $P\eta \colon P\overset{\sim}{\Rightarrow} PIP$ is invertible. 
From $(P\theta) (P\eta) = P(\theta \eta) = P\eta'$ we further deduce the invertibility of $P\eta' \colon P \Rightarrow PI'P'$, hence of the composite
\[
\alpha\colon 
\xymatrix{
\Id_Y
 \ar@{=>}[r]^-{\varepsilon^{-1}} &
PI
 \ar@{=>}[r]^-{P \eta' I} &
PI'P'I = GF.
}
\]
By symmetry, we similarly obtain the isomorphism
\[
\beta\colon
\xymatrix{
\Id_{Y'}
 \ar@{=>}[r]^-{{\varepsilon'}^{-1}} &
P'I'
 \ar@{=>}[r]^-{P' \eta I'} &
P'IPI' = FG.
}
\]
The commutative diagram of isomorphisms
\[
\xymatrix{
P'I 
  \ar@/^6ex/@{=>}[rr]^-{F\alpha}
  \ar@/_3ex/@{=>}[dr]_-{\id} & 
P'I PI 
  \ar@{=>}[l]_-{\;\; P'I \varepsilon}
  \ar@{=>}[r]^-{\;\; P'IP \eta' I} & 
P'IPI'P'I 
  \ar@/^6ex/@{=>}[dd]^-{\beta^{-1}F} \\
& 
P'I
  \ar@{=>}[u]^{P'\eta I}
  \ar@{=>}[r]_-{\;\; P'\eta' I}
  \ar@/_3ex/@{=>}[dr]_-{\id} & 
P'I'P'I
  \ar@{=>}[u]^{P'\eta I' P'I}
  \ar@{=>}[d]_{\varepsilon ' P'I} \\
& & P'I
}
\]
(which uses a triangular identity of $P\dashv I$ and one of $P'\dashv I'$) displays one of the triangular identities for an adjunction $(F \dashv G,\alpha, \beta^{-1})$; the other one is similar. 

We must still verify that this adjoint equivalence $F\colon Y \simeq Y'\!: G$ matches the given adjunctions $P\dashv I$ and $P'\dashv I'$.
Indeed, we have the isomorphism $P'\varepsilon \colon FP = P'I P \overset{\sim}{\Rightarrow} P'$, and its mate under adjunction is an isomorphism $I'\cong IG$.
(One also checks immediately that $F\dashv G$ is in fact an adjoint equivalence both in the comma bicategory of~$\cat B$ under~$X$ and that over~$X$.)

The converse implication is obvious.
\end{proof}

\begin{Prop} \label{Prop:idemp-char}
Let $X$ be an object of an additive bicategory.
Let $I\colon Y \leftrightarrows X \! : P$ and $J\colon Z \leftrightarrows X \! : Q$ be two splitting data for~$X$ (\Cref{Ter:dirsum}). 
Then the two are equivalent, as splitting data for~$X$, if and only if their associated idempotent elements of $\End(\Id_X)$ are equal (\Cref{Def:assoc-idemp}). 
\end{Prop}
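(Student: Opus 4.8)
The plan is to exploit the fact that a splitting datum exhibits $IP$ as the image of its associated idempotent inside the $1$-category $\cat B(X,X)$, and then to feed this into \Cref{Lem:EM-UP}. Concretely, combining \Cref{Rem:monad} and \Cref{Rem:loc}, the $2$-cells $\eta\colon \Id_X \Rightarrow IP$ and $\overline{\varepsilon}\colon IP \Rightarrow \Id_X$ satisfy $\eta\circ\overline{\varepsilon} = \id_{IP}$ and $\overline{\varepsilon}\circ\eta = e$, so that $(IP,\eta,\overline{\varepsilon})$ is nothing but a splitting of the associated idempotent $e = \overline{\varepsilon}\eta$ in the additive category $\cat B(X,X)$; likewise $(JQ,\eta',\overline{\varepsilon}')$ splits $e'$. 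I will treat the two implications separately.

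For the ``if'' direction, assume $e = e'$. Since splittings of an idempotent are unique up to a unique compatible isomorphism, the $2$-cell $\phi := \eta'\circ\overline{\varepsilon}\colon IP \Rightarrow JQ$ is invertible and satisfies $\phi\circ\eta = \eta'$ and $\overline{\varepsilon}'\circ\phi = \overline{\varepsilon}$ (both identities are immediate from $\eta\circ\overline{\varepsilon}=\id_{IP}$, $\eta'\circ\overline{\varepsilon}'=\id_{JQ}$ and $e=e'$). By \Cref{Rem:loc} the localizations associated to the two reflections $P\dashv I$ and $Q\dashv J$ are exactly $(IP,\eta)$ and $(JQ,\eta')$, and $\phi$ is an isomorphism between them identifying the units. \Cref{Lem:EM-UP} then yields an equivalence $Y\simeq Z$ identifying the two adjunctions; unwinding its construction, this equivalence comes with an isomorphism $FP\cong Q$ and is therefore precisely an equivalence of splitting data in the sense of \Cref{Ter:dirsum}.

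For the ``only if'' direction, suppose the two splitting data are equivalent. By the (obvious) converse part of \Cref{Lem:EM-UP} applied to the underlying reflections, the equivalence induces an isomorphism $\phi\colon IP\cong JQ$ of the associated localizations with $\phi\circ\eta = \eta'$. It remains to check that $\phi$ also respects the sections, i.e. that $\overline{\varepsilon}'\circ\phi = \overline{\varepsilon}$; granting this, $e = \overline{\varepsilon}\circ\eta = \overline{\varepsilon}'\circ\phi\circ\eta = \overline{\varepsilon}'\circ\eta' = e'$, as wanted. This last compatibility is the real content, and the reason it holds is that in a splitting datum the colocalization is not extra data: the counit $\varepsilon$ is determined by the reflection's unit $\eta$, hence so is $\overline{\eta}=\varepsilon^{-1}$, and the counit $\overline{\varepsilon}$ of the coreflection $I\dashv P$ is in turn determined by its unit $\overline{\eta}$. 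Thus the whole splitting datum, and in particular the section $\overline{\varepsilon}$, is determined up to the given equivalence by the reflection alone, which forces $\overline{\varepsilon}'\circ\phi = \overline{\varepsilon}$.

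I expect this last step---verifying that the transported isomorphism $\phi$ matches the sections $\overline{\varepsilon}$ and $\overline{\varepsilon}'$, and not merely the retractions $\eta,\eta'$---to be the main obstacle, since the definition of a morphism of splitting data only records compatibility with the $P$-components. The cleanest way to discharge it is the ``unit determines counit'' observation above; alternatively one can construct $\phi$ explicitly from $\theta_F\colon FP\cong Q$ together with the comparison isomorphism $I\cong JF$ provided by the equivalence $F$ and a pseudo-inverse, and then verify both compatibilities by a direct mate computation using the triangular identities and the invertibility of the counits $\varepsilon,\varepsilon'$.
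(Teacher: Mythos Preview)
Your proposal is correct and follows essentially the same strategy as the paper. For the ``if'' direction both of you observe that $(IP,\eta,\overline{\varepsilon})$ and $(JQ,\eta',\overline{\varepsilon}')$ are two image factorizations of the same idempotent in $\cat B(X,X)$, obtain an isomorphism of localizations, and invoke \Cref{Lem:EM-UP}. For the ``only if'' direction, the paper carries out explicitly the mate computation you describe in your final paragraph: it writes down the two comparison $2$-cells $JF\Rightarrow I$ and $I\Rightarrow JF$ obtained from $\theta$ via the adjunctions $I\dashv P$ and $P\dashv I$ respectively, checks they are mutual inverses using $\eta_Z\overline{\varepsilon}_Z=\id$ and $\varepsilon_Y=\overline{\eta}_Y^{-1}$, and then reads off both compatibilities $\phi\,\eta_Y=\eta_Z$ and $\overline{\varepsilon}_Z\,\phi=\overline{\varepsilon}_Y$ from the triangular identities. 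Your abstract ``the coreflection is determined by the reflection'' argument is the conceptual reason this works, and the paper's computation is precisely its concrete verification; so the two routes coincide in substance.
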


\begin{proof}
Let us introduce the following notation for the units and counits of the four adjunctions $I \dashv P \dashv I$ and $J \dashv Q \dashv J$: 
\begin{align*}
\eta_Y \colon \Id_X\Rightarrow IP 
\quad \quad
\varepsilon_Y \colon PI \overset{\sim}{\Rightarrow} \Id_Y
\quad \quad
\overline{\eta}_Y \colon \Id_Y \overset{\sim}{\Rightarrow} PI
\quad\quad
\overline{\varepsilon}_Y \colon IP \Rightarrow \Id_X \\
\eta_Z \colon \Id_X\Rightarrow JQ 
\quad \quad
\varepsilon_Z \colon QJ \overset{\sim}{\Rightarrow} \Id_Z
\quad \quad
\overline{\eta}_Z \colon \Id_Z \overset{\sim}{\Rightarrow} QJ
\quad\quad
\overline{\varepsilon}_Z \colon JQ \Rightarrow \Id_X
\end{align*}
Recall that $\varepsilon_Y = \overline{\eta}_Y^{-1}$, $\varepsilon_Z = \overline{\eta}_Z^{-1}$, $\eta_Y \overline{\varepsilon}_Y=\id_{IP}$ and $\eta_Z \overline{\varepsilon}_Z=\id_{JQ}$ by definition.
The two associated idempotents are $e_Y:= \overline{\varepsilon}_Y\eta_Y $ and $e_Z:= \overline{\varepsilon}_Z\eta_Z $.

Suppose first that there is an equivalence $Y \simeq Z$ of splitting data, \ie an equivalence $F\colon Y\overset{\sim}{\to}Z$ equipped with an isomorphism $\theta\colon FP \overset{\sim}{\Rightarrow} Q$ in~$\cat B$. We must show that $e_Y= e_Z$.
By taking mates, we easily obtain $I \cong JF$ and therefore $IP\cong JQ$. 
But there is an ambiguity: We can take mates with respect either to the left or the right adjunctions, and the resulting isomorphism will either match $\eta_Y$ with $\eta_Z$ or $\overline{\varepsilon}_Y$ with $\overline{\varepsilon}_Z$, accordingly, but \emph{a~priori} not both. 
Luckily, the strong bond between the left and the right adjoint implies the two mates coincide. Let us be more precise.

The isomorphism $JF\cong I$ resulting from $I\dashv P$ amounts to the composite
\[
\phi \colon
\xymatrix{
JF
 \ar@{=>}[r]_-\sim^-{JF \,\overline{\eta}_Y \;} &
JFPI
 \ar@{=>}[r]_-\sim^-{J \theta I} &
JQI
 \ar@{=>}[r]^-{\overline{\varepsilon}_Z I} &
I
}
\]
(this formally requires choosing an adjoint quasi-inverse for~$F$, but then the associated data gets canceled out.)
Using $P\dashv I$ instead, we get the isomorphism 
\[
\psi\colon
\xymatrix{
I 
 \ar@{=>}[r]^-{\eta_Z I} &
JQI 
  \ar@{=>}[r]_-\sim^-{\;\; J \theta^{-1} \!I} &
JFPI 
  \ar@{=>}[r]_-\sim^-{JF \varepsilon_Y\;} &
JF
}
\]
From $\eta_Z \overline{\varepsilon}_Z=\id$ and $\varepsilon_Y = \overline{\eta}_Y^{-1}$, we immediately see that $\psi\phi = \id_{JF}$.
Hence $\phi$ and $\psi$ are mutually inverse. (Incidentally, note that the composite $\phi \psi$ computes as $e_Z I \colon I\Rightarrow I$, showing that $e_Z I = \id_{I}$.)

We thus obtain the isomorphism $(J \theta )(\psi P) \colon IP \overset{\sim}{\Rightarrow} JQ$ with inverse $(\phi P)(J \theta^{-1})$.
To prove that $e_Z = e_Y$, it therefore suffices to show that
\[
(J \theta )(\psi P) \eta_Y = \eta_Z
\quad \textrm{ and } \quad
\overline{\varepsilon}_Y (\phi P)(J \theta^{-1}) = \overline{\varepsilon}_Z .
\]
After expanding the definitions of $\phi$ and~$\psi$, these equations immediately follow from one of the triangular identities for the adjunction $P\dashv I$ or $I \dashv P$, respectively. 

To prove the converse implication, suppose now that $e_Y = e_Z$ for the two given splitting data. We must find an equivalence between the latter.

First of all, since $T:= IP \cong \img(e_Y)$ and $S:= JQ \cong \img(e_Z)$ are two images of the same morphism  in the additive category $\End_\cat B(\Id_X)$, there exists an isomorphism $\theta\colon T\overset{\sim}{\Rightarrow} S$ such that $\theta \eta_Y = \eta_Z$ and $\overline{\varepsilon}_Z \theta = \overline{\varepsilon}_Y$.
In particular, $\theta$ is an isomorphism $(T,\eta_Y) \cong (S,\eta_Z)$ of the localizations of~$X$ induced by the adjunctions $P\dashv I$ and $Q\dashv J$.
By \Cref{Lem:EM-UP}, there exists an equivalence $Y\overset{\sim}{\to}Z$ preserving these adjunctions. 
It is in particular an equivalence of splitting data.
\end{proof}

\begin{Rem} \label{Rem:wbc-vs-ic}
A similar treatment of categorified direct summands can be found in \cite{DouglasReutter18pp}. 
In particular, our uniqueness result corresponds to \cite[Prop.\,1.3.4]{DouglasReutter18pp}; neither result implies the other, strictly speaking, but the ideas are closely related. 
\end{Rem}

\section{Krull--Schmidt bicategories}
\label{sec:KS} %

We are now ready to prove our main results. Indeed, it will be a very easy task because most of the work is already done.

\begin{Def} \label{Def:str-indec}
We say that a nonzero object $X$ of a locally additive bicategory $\cat B$ is \emph{strongly indecomposable} if the commutative ring $\End_{\cat B(X,X)}(\Id_X)$ is connected.
\end{Def}

Let us justify the use of ``strongly'':

\begin{Rem} \label{Rem:str-indec-vs-indec} 
Strongly indecomposable objects are indecomposable. 
Indeed, nontrivial decompositions $X\simeq X_1 \oplus X_2$ produce (as in \Cref{Rem:matrix-not}) nontrivial decompositions $1_X = e_1 + e_2$  in orthogonal idempotents of $\End_\cat B(\Id_X)$, where $e_i$ is the idempotent associated to the direct summand~$X_i$ as in \Cref{Def:assoc-idemp}.
\end{Rem}

\begin{Thm} 
\label{Thm:KRS}
Let $X$ be an object of an additive bicategory~$\cat B$ which admits 
 two direct sum decompositions
\[
X\simeq Y_1 \oplus \cdots \oplus Y_n
\quad \textrm{ and }\quad
X \simeq Z_1 \oplus \cdots \oplus Z_m
\]
such that all $Y_k$ and $Z_\ell$ are strongly indecomposable (\Cref{Def:str-indec}).
Then $n=m$ and, after renumbering the factors, there are equivalences $Y_k \simeq Z_k$ for all~$k$.
\end{Thm}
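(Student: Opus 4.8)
The plan is to collapse the entire bicategorical statement onto an elementary fact about idempotents in a commutative ring, using the dictionary between direct summands and idempotents built in \Cref{sec:dirsums}. The only invariant that matters here is the commutative 2-cell endomorphism ring $R := \End_{\cat B}(\Id_X)$, and the whole argument will take place inside it.

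First I would attach idempotents to both decompositions. Fixing adjoint direct sum structures (\Cref{Def:adj-sum}), each summand $Y_k$ (resp.\ $Z_\ell$) is a splitting datum for $X$ with an associated idempotent $e_k := \overline{\varepsilon}_k \eta_k$ (resp.\ $f_\ell$) in $R$, and by \Cref{Def:assoc-idemp} these are orthogonal with $1_X = e_1 + \cdots + e_n = f_1 + \cdots + f_m$. Next I would identify strong indecomposability with primitivity: under the isomorphism $R \cong \prod_k \End_{\cat B}(\Id_{Y_k})$ of \Cref{Rem:matrix-not}, the idempotent $e_k$ corresponds to the unit of the $k$-th factor, so the corner ring $e_k R$ is isomorphic to $\End_{\cat B}(\Id_{Y_k})$. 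Hence $e_k$ is primitive (it admits no nontrivial orthogonal splitting) precisely when $\End_{\cat B}(\Id_{Y_k})$ is connected, i.e.\ precisely when $Y_k$ is strongly indecomposable (\Cref{Def:str-indec}); the same holds for the $f_\ell$. We thus have two decompositions of $1 \in R$ into orthogonal primitive idempotents.

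I would then invoke the purely algebraic uniqueness statement: in a commutative ring, a decomposition of $1$ into orthogonal primitive idempotents is unique up to permutation. Concretely, for each $i$ one expands $e_i = \sum_j e_i f_j$ into a sum of the pairwise orthogonal idempotents $e_i f_j$ (both orthogonality and idempotency of these products use commutativity); primitivity of $e_i$ forces exactly one summand to be nonzero, say $e_i f_{\sigma(i)} = e_i$, and a symmetric argument on the $f_j$ shows that $\sigma$ is a bijection satisfying $e_i = f_{\sigma(i)}$. In particular $n = m$. Finally, having the honest equalities $e_k = f_{\sigma(k)}$ in $R$, I would apply \Cref{Prop:idemp-char}: equality of associated idempotents yields an equivalence of the corresponding splitting data, which restricts to an equivalence $Y_k \simeq Z_{\sigma(k)}$ of the underlying objects. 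Renumbering the $Z_\ell$ along $\sigma$ then finishes the proof.

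The work is genuinely light, since all the delicate bicategorical analysis was front-loaded into \Cref{sec:dirsums}. The only steps demanding attention are the two translations across the object/idempotent divide: recognizing that strong indecomposability is exactly primitivity of the associated idempotent (via \Cref{Rem:matrix-not}), and, at the end, that mere \emph{equality} of idempotents in $R$ is already strong enough---through \Cref{Prop:idemp-char}---to produce equivalences of objects rather than only isomorphisms of 1-cells. Neither is a real obstacle, only a bookkeeping point; the commutative-ring lemma itself is elementary, and its appearance here---with no analog in the classical Krull--Schmidt proof---is precisely what makes this route so clean.
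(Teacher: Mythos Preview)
Your proposal is correct and follows essentially the same route as the paper's (first) proof in \Cref{sec:KS}: reduce to two complete families of orthogonal primitive idempotents in the commutative ring $\End_{\cat B}(\Id_X)$, use the elementary uniqueness of such families, and then apply \Cref{Prop:idemp-char} to recover equivalences of summands. Your write-up merely spells out in more detail why strong indecomposability of $Y_k$ translates to primitivity of~$e_k$ and why the commutative-ring lemma holds, but the architecture is identical.
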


\begin{proof}
By construction, the identities $1_{Y_k}$ ($k=1,\ldots,n$) correspond under the given equivalences to a complete family of orthogonal primitive idempotents $e_k$ in the commutative ring $\End_\cat B(\Id_X)$.
Here each $e_k$ is the idempotent associated with the direct summand~$Y_k$.
Similarly, the direct summands $Z_\ell$ yield a complete family of orthogonal primitive idempotents~$f_\ell$.
But then clearly, this being a commutative ring, each of the $e_k$ must be equal to exactly one of the~$f_\ell$.
Thus $n=m$ and we find a permutation $\sigma$ such that $e_k = f_{\sigma(k)}$ for all~$k$.
It follows by \Cref{Prop:idemp-char} that $Y_k$ and $Z_{\sigma(k)}$ are equivalent, even as direct summands (\ie splitting data) of~$X$.
\end{proof}

\begin{Def} \label{Def:KS-bicat}
We say an additive bicategory is \emph{Krull--Schmidt} if every nonzero object admits a direct sum decomposition into strongly indecomposable objects.
\end{Def}

\begin{Rem}
In a weakly block complete bicategory  (\Cref{Def:bc}), indecomposable objects and strongly indecomposable objects coincide. Indeed, by definition, each nontrivial decomposition of the 2-cell $1_X$ as a sum of orthogonal idempotents arises from a nontrivial direct sum decomposition of the object~$X$.
\end{Rem}

\begin{Cor} \label{Cor:KRS}
In a Krull--Schmidt bicategory (\Cref{Def:KS-bicat}), indecomposable and strongly indecomposable objects coincide, and every object decomposes in an essentially unique way into a direct sum of indecomposable objects.
\end{Cor}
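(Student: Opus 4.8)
The plan is to reduce everything to results already available: the one-directional implication recorded in \Cref{Rem:str-indec-vs-indec}, the defining property of Krull--Schmidt bicategories (\Cref{Def:KS-bicat}), and the uniqueness statement \Cref{Thm:KRS}. I would organize the argument by first establishing that the two notions of (strong) indecomposability coincide; once this is done, both the existence and the uniqueness of decompositions into indecomposables follow immediately by rereading the cited results.

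For the coincidence, recall that \Cref{Rem:str-indec-vs-indec} already gives that strongly indecomposable objects are indecomposable, so only the reverse implication needs work, and this is precisely where the Krull--Schmidt hypothesis enters. I would take a nonzero indecomposable object $X$ and invoke \Cref{Def:KS-bicat} to write $X \simeq Y_1 \oplus \cdots \oplus Y_n$ with each $Y_k$ strongly indecomposable, hence in particular nonzero. If $n \geq 2$, then grouping the factors as $X \simeq Y_1 \oplus (Y_2 \oplus \cdots \oplus Y_n)$ exhibits a direct sum decomposition (by associativity of finite direct sums, \Cref{Def:dir-sum}) in which both summands are nonzero, contradicting the indecomposability of $X$ in the sense of \Cref{Def:indec}. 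Hence $n = 1$ and $X \simeq Y_1$ is strongly indecomposable, as required.

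With the two notions thus identified, the existence half of the corollary is simply a restatement of \Cref{Def:KS-bicat}: every nonzero object decomposes into strongly indecomposable, \ie indecomposable, objects, while the zero object is the empty direct sum (a decomposition into no factors). For uniqueness, I would observe that \Cref{Thm:KRS}, whose conclusion concerns decompositions into strongly indecomposable objects, now reads verbatim as a statement about decompositions into indecomposable objects, giving that $n=m$ and $Y_k \simeq Z_{\sigma(k)}$ after a permutation. For the zero object one checks separately that a decomposition into nonzero indecomposables must be empty, since any nonzero summand of a zero object would be a retract of $0$ and hence itself zero.

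The only point that requires the slightest care---and thus the ``main obstacle,'' modest as it is---is the step asserting that a direct sum of $n \geq 2$ nonzero objects is genuinely decomposable. This rests on the fact (built into \Cref{Def:str-indec}) that strongly indecomposable objects are nonzero, together with the observation that a finite direct sum containing a nonzero summand is itself nonzero, which one sees because any summand of a zero object is again zero. Everything else in the corollary is a direct appeal to the already-proven \Cref{Thm:KRS} and \Cref{Rem:str-indec-vs-indec}, which is why the proof is as short as the surrounding discussion promises.
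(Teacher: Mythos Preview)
Your argument is correct and follows the same route as the paper: use \Cref{Rem:str-indec-vs-indec} for one implication, use the Krull--Schmidt hypothesis to force $n=1$ for an indecomposable~$X$, and then invoke \Cref{Thm:KRS} for uniqueness. The paper's proof is terser but substantively identical.
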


\begin{proof}
Strongly indecomposable objects are indecomposable by \Cref{Rem:str-indec-vs-indec}.
By hypothesis every object $X$ admits a decomposition $X\simeq X_1\oplus\cdots \oplus X_n$ into strongly indecomposables. If $X$ is indecomposable, $n=1$ hence $X\simeq X_1$ is strongly indecomposable.
The remaining statement now follows from \Cref{Thm:KRS}. 
\end{proof}

\begin{Def} \label{Def:semi-con}
As in the introduction, we  say a commutative ring is \emph{semiconnected} if it is a direct product of finitely many connected rings.
\end{Def}

\begin{Thm} \label{Thm:KS-char}
An additive bicategory is Krull--Schmidt if and only if it is weakly block complete and every 2-cell endomorphism ring is semiconnected.
\end{Thm}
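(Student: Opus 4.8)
The plan is to reduce everything to elementary commutative algebra in the 2-cell endomorphism rings, exploiting the two translations already available to us. On one side, \Cref{Rem:matrix-not} tells us that a direct sum decomposition $X\simeq X_1\oplus\cdots\oplus X_n$ induces a ring isomorphism $\End_\cat B(\Id_X)\cong\prod_i \End_\cat B(\Id_{X_i})$ under which the idempotents $1_{X_i}$ become a complete orthogonal family; on the other, weak block completeness (\Cref{Def:bc}) lets us read such idempotents backwards, realizing any splitting of $1_X$ in $\End_\cat B(\Id_X)$ as an object-level decomposition. The only facts about rings we need are that an object is strongly indecomposable (\Cref{Def:str-indec}) exactly when $\End_\cat B(\Id_X)$ is connected, i.e.\ has no idempotents besides $0\neq 1$, and that a commutative ring is semiconnected (\Cref{Def:semi-con}) exactly when $1$ splits into a finite complete family of orthogonal primitive idempotents whose corner rings are connected.

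For the forward direction I would assume $\cat B$ is Krull--Schmidt and fix an object $X$, which I may take nonzero since otherwise both conclusions are vacuous. Choosing a decomposition $X\simeq W_1\oplus\cdots\oplus W_n$ into strongly indecomposables, \Cref{Rem:matrix-not} gives $\End_\cat B(\Id_X)\cong\prod_{i=1}^n\End_\cat B(\Id_{W_i})$, a finite product of connected rings, so $\End_\cat B(\Id_X)$ is semiconnected. For weak block completeness, let $f_1,\dots,f_n$ be the associated idempotents (\Cref{Def:assoc-idemp}), a complete orthogonal family with $\sum_i f_i=1_X$. Given an arbitrary idempotent $e\in\End_\cat B(\Id_X)$, its component in each connected factor $\End_\cat B(\Id_{W_i})$ is idempotent, hence $0$ or $1$; writing $S=\{i:\text{the }i\text{-th component is }1\}$ gives $e=\sum_{i\in S}f_i$. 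Regrouping the summands as $X'=\bigoplus_{i\in S}W_i$ and $X''=\bigoplus_{i\notin S}W_i$ yields $X\simeq X'\oplus X''$ whose idempotent associated to $X'$ is $\sum_{i\in S}f_i=e$; this is precisely the decomposition demanded by \Cref{Def:bc}.

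For the converse I would assume $\cat B$ is weakly block complete with every 2-cell endomorphism ring semiconnected, fix a nonzero $X$, and induct on the number of connected factors. Since $X$ is nonzero, $\id_{\Id_X}\neq 0$ and so $\End_\cat B(\Id_X)$ is a nonzero ring; thus we may write $\End_\cat B(\Id_X)\cong R_1\times\cdots\times R_n$ with each $R_i$ connected (hence nonzero) and $n\geq 1$. If $n=1$ the ring is connected, so $X$ is itself strongly indecomposable and the one-term decomposition suffices. If $n\geq 2$, let $e_1$ be the primitive idempotent cutting out $R_1$; \Cref{Def:bc} produces a decomposition $X\simeq X_1\oplus X_1'$ realizing $e_1$ and $1_X-e_1$, with both summands nonzero because $e_1\neq 0\neq 1_X-e_1$. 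By \Cref{Rem:matrix-not} the factors are identified as $\End_\cat B(\Id_{X_1})\cong e_1\End_\cat B(\Id_X)\cong R_1$, which is connected so $X_1$ is strongly indecomposable, and $\End_\cat B(\Id_{X_1'})\cong(1_X-e_1)\End_\cat B(\Id_X)\cong R_2\times\cdots\times R_n$, semiconnected with $n-1$ factors. The induction hypothesis decomposes $X_1'$, and appending $X_1$ decomposes $X$.

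Since the heavy lifting is done by \Cref{Rem:matrix-not} and \Cref{Def:bc}, there is no single hard step: the argument is careful bookkeeping. The one point deserving attention is the well-foundedness of the induction in the converse, where I must confirm that the complementary summand $X_1'$ again has a semiconnected endomorphism ring with strictly fewer connected factors. This hinges on the compatibility of the isomorphism of \Cref{Rem:matrix-not} with the chosen idempotent, namely the identifications of $\End_\cat B(\Id_{X_1})$ and $\End_\cat B(\Id_{X_1'})$ with the corner rings $e_1\End_\cat B(\Id_X)$ and $(1_X-e_1)\End_\cat B(\Id_X)$, which guarantees that the connected-factor count drops by one at each stage and the recursion terminates.
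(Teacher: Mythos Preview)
Your proof is correct and follows essentially the same approach as the paper's: both directions translate between object-level direct sum decompositions and idempotent decompositions in the commutative ring $\End_\cat B(\Id_X)$ via \Cref{Rem:matrix-not} and \Cref{Def:bc}. The only cosmetic differences are that the paper cites \Cref{Prop:idemp-char} to justify that the regrouped summand $\bigoplus_{i\in S}W_i$ realizes the idempotent~$e$ (whereas you argue this directly, which is fine), and that the paper handles the converse in one step rather than by explicit induction on the number of connected factors.
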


\begin{proof}
Consider an idempotent 2-cell $e\colon \Id_X\Rightarrow \Id_X$ on an object~$X$ of  a Krull--Schmidt bicategory~$\cat B$. 
Let $X\simeq X_1 \oplus \cdots \oplus X_n$ be a decomposition in strongly indecomposable objects~$X_k$. 
This equivalence induces a decomposition $1_X = e_1 +\cdots + e_n$ in orthogonal primitive idempotents, where $e_k\in \End_\cat B(\Id_X)$ is the idempotent associated to~$X_k$.
It follows that $e= \sum_{k\in J} e_k$ for some subset $J\subseteq \{1,\ldots,n\}$.
By \Cref{Prop:idemp-char}, $X \simeq \big( \bigoplus_{k\in J} X_k \big) \oplus \big( \bigoplus_{k\not\in J} X_k \big)$ 
realizes  the decomposition $1_X = e + (1_X - e)$ at the level of objects. 
This shows that $\cat B$ is weakly block complete. 
Also, each ring $\End_\cat B(\Id_{X_k})\cong e_k \End_\cat B(\Id_X)$ is connected as $e_k$ is primitive.

Now suppose that $\cat B$ is weakly block complete and that for each object $X$ there is a decomposition $\End_\cat B(\Id_X)\cong R_1 \times \cdots \times R_n$ in finitely many connected rings~$R_k$. 
Then $1_X$ is a sum of $n$ primitive idempotents and, by weak block completeness, there follows a decomposition $X\simeq X_1 \oplus \cdots \oplus X_n$ with $\End(\Id_{X_k})\cong R_k$. 
In particular, each $X_k$ is strongly indecomposable. 
This shows that $\cat B$ is Krull--Schmidt.
\end{proof}

\section{Notions of strong indecomposability}
\label{sec:indecs} %

In this section we compare various candidates for the notion of strong indecomposable objects in an additive bicategory (\Cref{Cor:strongly-indec-X}). 
This further justifies our choice in \Cref{Def:str-indec}, if needed, and relates it to its 1-categorical analog.

\begin{Lem} \label{Lem:comm-matrices}
Suppose $A$ and $B$ are two objects in an additive category such that the endomorphism ring $\End(A\oplus B)$ of their sum is commutative. 
Then $A$ and $B$ are Hom-orthogonal and therefore $\End(A\oplus B)=\End(A)\times \End(B)$.
\end{Lem}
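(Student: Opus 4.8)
The plan is to exploit commutativity by pairing each off-diagonal morphism against the idempotent that projects onto the first summand. Fix the structure maps of the sum: injections $i_A\colon A\to A\oplus B$ and $i_B\colon B\to A\oplus B$ and projections $p_A, p_B$ satisfying $p_A i_A=\id_A$, $p_B i_B=\id_B$, $p_A i_B = 0 = p_B i_A$ and $i_A p_A + i_B p_B = \id_{A\oplus B}$. Writing endomorphisms of $A\oplus B$ as $2\times 2$ matrices in the usual way (so that composition is matrix multiplication), the element $e := i_A p_A$ is the idempotent $\smat{\id_A & 0 \\ 0 & 0}$.

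First I would show $\Hom(A,B)=0$. Given any $g\colon A\to B$, form the endomorphism $h := i_B\, g\, p_A$ of $A\oplus B$, which is the matrix $\smat{0 & 0 \\ g & 0}$. A one-line computation using the relations above gives $e\,h = 0$ while $h\,e = h$. By the assumed commutativity of $\End(A\oplus B)$ these two coincide, forcing $h=0$; whiskering by $p_B$ on the left and $i_A$ on the right then recovers $g = p_B\,h\,i_A = 0$. The symmetric argument, applied to an arbitrary $g'\colon B\to A$ via $h' := i_A\, g'\, p_B = \smat{0 & g' \\ 0 & 0}$, yields this time $e\,h'=h'$ and $h'\,e=0$, hence $h'=0$ and $g'=0$. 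Thus $A$ and $B$ are Hom-orthogonal.

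Finally, Hom-orthogonality says precisely that both off-diagonal entries $\Hom(B,A)$ and $\Hom(A,B)$ in the matrix description of $\End(A\oplus B)$ vanish, so every endomorphism is diagonal, $\smat{f & 0\\0 & g}$ with $f\in\End(A)$ and $g\in\End(B)$. Since multiplication of diagonal matrices is entrywise, this identification is an isomorphism of rings $\End(A\oplus B)\cong\End(A)\times\End(B)$, as claimed.

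The whole argument is elementary matrix bookkeeping with the two idempotents $i_A p_A$ and $i_B p_B$; there is no real obstacle beyond choosing the auxiliary endomorphism ($h$ or $h'$) so that commuting it past $e$ isolates exactly the off-diagonal morphism one wants to kill.
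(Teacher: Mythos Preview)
Your proof is correct and essentially identical to the paper's: both kill the off-diagonal entries by commuting them against a diagonal idempotent and reading off the resulting equation. The only cosmetic difference is that the paper uses the idempotent $\smat{0&0\\0&1}$ for one direction and $\smat{1&0\\0&0}$ for the other, whereas you use $e=\smat{1&0\\0&0}$ for both---which works just as well.
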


\begin{proof}
For $x\in \Hom(A,B)$ and $y\in \Hom(B,A)$, consider the elements $u=[\,{}^1_0 \,{}^0_0\,]$, $v=[\,{}^0_0\,{}^0_1\,]$, $f= [\,{}^0_x\,{}^0_0\,]$ and $g=[\,{}^0_0\,{}^y_0\,]$ of 
$\End(A\oplus B)$. By commutativity, the calculations $vf = f$ and $fv=0$ imply $x=0$, whereas $ug = g$ and $gu=0$ imply $y=0$.
\end{proof}

\begin{Prop} \label{Prop:indecs-general}
Consider the following properties of a (nonzero) additive mon\-oid\-al category~$\cat A$, \ie a category which is additive and monoidal and whose tensor functor $\otimes \colon \cat A\times \cat A\to \cat A$ is additive in both variables:
\begin{enumerate} [\rm(1)]
\item \label{it:localunit}
The commutative endomorphism ring $\End_\cat A(\Unit)$ of the tensor unit $\Unit$ is connected.
\item \label{it:unit}
The tensor unit object $\Unit$ is indecomposable in~$\cat A$ for the direct sum.
\item \label{it:local}
If a sum of two objects of $\cat A$ is tensor-invertible, one of them must be too.
\item \label{it:tenscat}
If $\cat A$ is equivalent as a monoidal category to a product $ \cat A_1 \times \cat A_2$ of two additive monoidal categories, then $\cat A_1\simeq 0$ or $\cat A_2\simeq 0$.
\end{enumerate}
Then   \eqref{it:localunit}$\Rightarrow$\eqref{it:unit}$\Rightarrow$\eqref{it:local}$\Rightarrow$\eqref{it:tenscat}, and if $\cat A $ is idempotent complete all four are equivalent.
\end{Prop}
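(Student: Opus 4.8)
The plan is to prove the linear chain (1)$\Rightarrow$(2)$\Rightarrow$(3)$\Rightarrow$(4) with no extra hypotheses, and then to close the cycle with the single implication (4)$\Rightarrow$(1) under idempotent completeness. For (1)$\Rightarrow$(2): a decomposition $\Unit \cong X_1 \oplus X_2$ splits the identity of $\Unit$ as $1 = e_1 + e_2$ into orthogonal idempotents of the commutative ring $\End_\cat A(\Unit)$ with $\img(e_i) \cong X_i$; connectedness forces some $e_i = 0$, whence $X_i \cong 0$. For (2)$\Rightarrow$(3): writing $L := X_1 \oplus X_2$ for the invertible sum with inverse $L^{-1}$, distributing the tensor yields $\Unit \cong L \otimes L^{-1} \cong (X_1 \otimes L^{-1}) \oplus (X_2 \otimes L^{-1})$, so indecomposability of $\Unit$ kills one summand; applying the quasi-inverse equivalence $-\otimes L$ then shows that the corresponding $X_i$ is itself zero, leaving the other factor equivalent to the invertible object $L$.

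For (3)$\Rightarrow$(4), a monoidal equivalence $\cat A \simeq \cat A_1 \times \cat A_2$ carries $\Unit$ to $(\Unit_1,\Unit_2) = (\Unit_1, 0) \oplus (0, \Unit_2)$. Since the unit is always tensor-invertible, property (3) makes one summand, say $(\Unit_1, 0)$, invertible; but any putative inverse $(Y_1, Y_2)$ gives $(\Unit_1, 0) \otimes (Y_1, Y_2) \cong (Y_1, 0)$, which can equal $(\Unit_1, \Unit_2)$ only if $\Unit_2 \cong 0$, and a monoidal category with zero unit is itself zero. Hence $\cat A_2 \simeq 0$.

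The substantive implication is (4)$\Rightarrow$(1), and here idempotent completeness enters. Arguing contrapositively, I would assume $\End_\cat A(\Unit)$ carries a nontrivial idempotent $e$. Each object $X$ acquires a natural idempotent endomorphism $e_X$, namely $e \otimes \id_X$ read through the unitor, and the decisive coherence identity is $e_{X \otimes Y} = e_X \otimes \id_Y = \id_X \otimes e_Y$. Splitting every $X$ along $e_X$ and $\id_X - e_X$ (possible by idempotent completeness), I would let $\cat A_1$ and $\cat A_2$ be the full subcategories on which $e$ acts as the identity, resp.\ as zero, and verify that $X \mapsto (e_X X, (\id_X - e_X)X)$ is a monoidal equivalence $\cat A \simeq \cat A_1 \times \cat A_2$. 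The identity above forces $X \otimes Y \cong 0$ whenever $X \in \cat A_1$ and $Y \in \cat A_2$ (as then $\id_{X \otimes Y} = e_X \otimes \id_Y = \id_X \otimes e_Y = 0$), which is exactly the vanishing of cross terms that makes the product monoidal, with $\img(e)$ serving as the unit of $\cat A_1$. Nontriviality of $e$ makes both factors nonzero, contradicting (4).

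The main obstacle is this last step: establishing the central-action identity $e_{X \otimes Y} = e_X \otimes \id_Y = \id_X \otimes e_Y$ (an Eckmann--Hilton / centrality computation with the unitors and associator) and then packaging the pointwise splittings into a bona fide monoidal functor together with a monoidal quasi-inverse. By contrast, the three forward implications reduce to routine manipulations of split idempotents and of the autoequivalences given by tensoring with an invertible object.
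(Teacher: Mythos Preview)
Your proposal is correct and follows essentially the same route as the paper's proof: the three forward implications are handled by the same manipulations (splitting idempotents on $\Unit$, distributing the tensor over a sum and using indecomposability of $\Unit$, then exploiting that one partial unit becomes invertible), and for \eqref{it:tenscat}$\Rightarrow$\eqref{it:localunit} both you and the paper let an idempotent $e\in\End_\cat A(\Unit)$ act on every object via the unitor, split the resulting idempotents, and assemble the images into a monoidal product decomposition of~$\cat A$. The only cosmetic difference is that in \eqref{it:unit}$\Rightarrow$\eqref{it:local} you conclude one summand is zero (hence the other equals the invertible~$L$), whereas the paper directly identifies one summand with~$\Unit$ after tensoring; the content is the same.
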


\begin{proof}
\eqref{it:localunit}$\Rightarrow$\eqref{it:unit}:
Suppose that $\Unit \cong E_1 \oplus E_2$. 
Since $\End(\Unit)$ is commutative, we have $\End(\Unit)\cong \End(E_1) \times \End(E_2)$ by  \Cref{Lem:comm-matrices}. 
Assuming $\End(\Unit)$ is connected, we must therefore have $\End(E_i)=0$ and thus $E_i\cong 0$ for $i=1$ or $i=2$.

\eqref{it:unit}$\Rightarrow$\eqref{it:local}:
Suppose $A\oplus B$ has a two-sided tensor inverse~$C$, with $A,B,C\in \cat A$. Then $\Unit \cong (A \oplus B)\otimes C\cong (A \otimes C)\oplus (B \otimes C)$, hence if $\Unit$ is indecomposable we must have $\Unit \cong A \otimes C$ or $\Unit \cong B \otimes C$. Since $C$ is invertible, either $A$ or $B$ must be too.

\eqref{it:local}$\Rightarrow$\eqref{it:tenscat}:
Given an equivalence $\cat A\simeq \cat A_1 \times \cat A_2$ of additive monoidal categories, there follows in $\cat A$ a decomposition $\Unit \cong E_1 \oplus E_2$, where $E_i$ corresponds to the tensor unit of~$\cat A_i$. 
Since $\Unit$ is tensor invertible in~$\cat A$, then by~\eqref{it:local} either $E_1$ or~$E_2$ must be as well. 
Say $E_1$ is, with inverse~$F$. 
Since $E_1\otimes \cat A_2 \simeq 0$, it follows that $\cat A_2 = \Unit \otimes \cat A_2 = F \otimes E_1 \otimes \cat A_2 \simeq 0$.
Thus \eqref{it:tenscat} holds.

Finally, we show \eqref{it:tenscat}$\Rightarrow$\eqref{it:localunit} assuming that $\cat A$ is idempotent complete.
Let $\id_\Unit = e_1 + e_2$ for two orthogonal idempotents $e_1,e_2\colon \Unit \to \Unit$.
For every object $A \in \cat A$, write $e_i\!\!\downharpoonright_A$ for the endomorphism of $A$ corresponding to $e_i \otimes A$ under the structural isomorphism $A \cong \Unit \otimes A$.
Note that $e_1\!\!\downharpoonright_A$ and $e_2\!\!\downharpoonright_A$ are orthogonal idempotents on~$A$ with $\id_A = e_1 \!\!\downharpoonright_A + \; e_2 \!\!\downharpoonright_A$. 
By idempotent completion, we get a decomposition $A \cong A_1 \oplus A_2$ where $A_i = \img(e_i\!\!\downharpoonright_A)$.
One easily checks that this direct sum decomposition is functorial in~$A$ and gives rise to a decomposition $\cat A \simeq \cat A_1 \times \cat A_2$ of $\cat A$ as an additive monoidal category, where $\cat A_i$ is the full subcategory $\{A \in \cat A\mid e_i \!\!\downharpoonright_A = \id_A\}$ of~$ \cat A$.
By~\eqref{it:tenscat}, we must have for either~$i$ that $\cat A_i\simeq 0$, that is that $\img(e_i\!\!\downharpoonright_A)\cong 0$ for all $A$ and thus $e_i=0$.
This shows that the ring $\End_\cat A(\Unit)$ is connected.
\end{proof}

\begin{Rem} \label{Rem:non-tensor-variants}
In the situation of \Cref{Prop:indecs-general}, we could also consider the following two properties of the category $\cat A$ which do not depend on its monoidal structure:  
\begin{enumerate} [\rm(1)$'$]
\item \label{it:Id}
The commutative ring $\End(\Id_\cat A)$ of natural transformations $\Id_\cat A\Rightarrow \Id_\cat A$, \ie the categorical ``center'' of~$\cat A$, is connected.
\setcounter{enumi}{3}
\item \label{it:cat}
If $\cat A\simeq \cat A_1 \times \cat A_2$ as an additive category then $\cat A_1\simeq 0$ or $\cat A_2\simeq 0$. 
\end{enumerate}
One can verify that \eqref{it:Id}$'$$\Rightarrow$\eqref{it:localunit}, \eqref{it:cat}$'$$\Rightarrow$\eqref{it:tenscat} and \eqref{it:Id}$'$$\Rightarrow$\eqref{it:cat}$'$ in general, and that \eqref{it:Id}$'$$\Leftrightarrow$\eqref{it:cat}$'$ if $\cat A$ is idempotent complete. 
But \eqref{it:Id}$'$ and \eqref{it:cat}$'$ do not seem  otherwise relevant here.
\end{Rem}

The endomorphism category of any object in a locally additive bicategory can be viewed as an additive monoidal category, with tensor product given by horizontal composition. In this case, \Cref{Prop:indecs-general} translates as follows:

\begin{Cor} 
\label{Cor:strongly-indec-X}
For any (nonzero) object $X$ of a locally additive bicategory~$\cat B$, we have implications \eqref{it:localunit-X}$\Rightarrow$\eqref{it:unit-X}$\Rightarrow$\eqref{it:local-X}$\Rightarrow$\eqref{it:tenscat-X} between the following properties, and all four are in fact equivalent if the additive category $\End_\cat B(X)$ is idempotent complete:
\begin{enumerate} [\rm(1)]
\item \label{it:localunit-X}
The commutative endomorphism ring $\End_\cat B(\Id_X)$ is connected.
\item \label{it:unit-X}
The identity 1-cell $\Id_X$ is indecomposable in the additive category~$\End_\cat B(X)$.
\item \label{it:local-X}
If a sum of two 1-cells $X\to X$ is an equivalence, then one of them must be too.
\item \label{it:tenscat-X}
$\End_\cat B(X)$ is indecomposable as an additive monoidal category.
\qed
\end{enumerate}
\end{Cor}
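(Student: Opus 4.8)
The plan is to obtain the corollary as a direct instance of \Cref{Prop:indecs-general}, applied to the additive monoidal category $\cat A := \End_\cat B(X) = \cat B(X,X)$. First I would check that this is a legitimate instance: by local additivity the Hom category $\cat B(X,X)$ is additive and its horizontal composition $\circ\colon \cat B(X,X)\times \cat B(X,X)\to \cat B(X,X)$ is additive in both variables, so $(\cat B(X,X),\circ,\Id_X)$ is an additive monoidal category of the kind the proposition accepts (the associativity and unit constraints being inherited from the ambient bicategory; no strictness is required, and in any case we work up to the coherence theorem as in the Conventions).

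Next I would spell out the dictionary between the four conditions of \Cref{Prop:indecs-general} and those of the corollary. The tensor unit of $\cat A$ is $\Unit = \Id_X$, so condition~\eqref{it:localunit} reads as the connectedness of $\End_\cat B(\Id_X)$, which is~\eqref{it:localunit-X}; condition~\eqref{it:unit} reads as the indecomposability of $\Id_X$ in $\cat B(X,X)$, which is~\eqref{it:unit-X}; and condition~\eqref{it:tenscat} is~\eqref{it:tenscat-X} verbatim. The one translation deserving a word is \eqref{it:local}$\leftrightarrow$\eqref{it:local-X}: a 1-cell $F\colon X\to X$, viewed as an object of $\cat A$, is tensor-invertible exactly when there is a $G$ with $F\circ G\cong \Id_X\cong G\circ F$, which is precisely the condition that $F$ be an equivalence $X\to X$ in $\cat B$. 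Hence ``a sum of two objects of $\cat A$ is tensor-invertible'' is literally ``a sum of two 1-cells $X\to X$ is an equivalence''.

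With this dictionary in hand, the implications \eqref{it:localunit-X}$\Rightarrow$\eqref{it:unit-X}$\Rightarrow$\eqref{it:local-X}$\Rightarrow$\eqref{it:tenscat-X}, together with their equivalence when $\End_\cat B(X)$ is idempotent complete, become verbatim transcriptions of the corresponding statements of \Cref{Prop:indecs-general}, and there is nothing further to prove. I do not foresee a genuine obstacle: the whole substance is the identification of tensor-invertibility in $\cat B(X,X)$ with being an equivalence, plus the routine observation that $\cat B(X,X)$ carries the structure of an additive monoidal category. If anything, the only point worth double-checking is that the possibly non-strict monoidal structure on $\cat B(X,X)$ causes no difficulty, and indeed it does not, since \Cref{Prop:indecs-general} was formulated for \emph{arbitrary} additive monoidal categories.
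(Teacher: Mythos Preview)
Your proposal is correct and is exactly the paper's own approach: the corollary is stated immediately after the sentence ``The endomorphism category of any object in a locally additive bicategory can be viewed as an additive monoidal category, with tensor product given by horizontal composition,'' and is then marked with a \qed\ as a direct translation of \Cref{Prop:indecs-general}. Your identification of tensor-invertibility in $\End_\cat B(X)$ with being an equivalence $X\to X$ is the only point requiring a word, and you handle it correctly.
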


\begin{Rem} \label{Rem:str-indec-1cat}
Property \eqref{it:local-X} is the evident 2-categorical analog of being a \emph{strongly indecomposable} object $A$ of an additive (1-)category~$\cat A$. Recall the latter means that $\End_\cat A(A)$ is a local ring, that is: If a sum $f+g$ of two morphisms $f,g\colon A\to A$ is invertible then either $f$ or $g$ is invertible. 
Since we do not wish to assume idempotent completion of our Hom categories, we are left with the choice of at least four reasonable 2-categorical analogs of strong indecomposability.
In \Cref{Def:str-indec}, we chose the strongest and most useful one. 
It is also the only one which always implies the usual indecomposability and which is stable under (weak) block completion.
\end{Rem}

\section{Direct comparison of direct summands}
\label{sec:bc} %

In this section we explain how to categorify the usual proof of the uniqueness of decompositions in strongly indecomposables.
This does not use the characterization of direct summands via idempotents (\Cref{Prop:idemp-char}), but instead requires us to understand how to ``cut down'' a direct summand by another (\Cref{Prop:crucial-splitting}). 
It is a considerably harder task than its 1-categorical analog, which is a triviality.

\begin{Lem} \label{Lem:splitting}
Suppose that $\cat B$ is weakly block complete, and that a 1-cell $E\colon X\to X$ is part of an idempotent monad $(E,\mu, \iota)$ and of an idempotent comonad $(E,\delta, \epsilon)$ such that $\iota\epsilon = \id_E$.
Then there is a decomposition $X\simeq Y\oplus Z$ such that the direct summand $Y$ induces the given monad and comonad as in \Cref{Rem:monad}.
\end{Lem}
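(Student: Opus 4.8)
The plan is to reduce the whole statement to the splitting of a single idempotent $2$-cell, and then to use weak block completeness to realize that splitting at the level of objects. First I would form the $2$-cell
\[
e := \epsilon \iota \colon \Id_X \Rightarrow \Id_X
\]
and observe that the hypothesis $\iota\epsilon = \id_E$ makes it idempotent, since $e^2 = \epsilon(\iota\epsilon)\iota = \epsilon\iota = e$. In fact the very same hypothesis says that the triple $(E,\iota,\epsilon)$ is precisely a \emph{splitting} of $e$ in the additive category $\End_\cat B(X)$: we have $\epsilon\iota = e$ together with $\iota\epsilon = \id_E$, so that $E \cong \img(e)$ with $\iota$ the coretraction and $\epsilon$ the retraction.

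Next I would feed this idempotent $e$ into the weak block completeness of $\cat B$ (\Cref{Def:bc}) to obtain a direct sum decomposition $X \simeq Y \oplus Z$ under which $e$ and $1_X - e$ correspond to the diagonal idempotents, and I would take $Y$ to be the first summand. Being a direct summand, $Y$ is a splitting datum (\Cref{Ter:dirsum}) and therefore induces, as in \Cref{Rem:monad}, an idempotent monad and comonad on $X$ with common underlying $1$-cell $E_Y := I_Y P_Y$, unit $\eta_Y$ and counit $\overline{\varepsilon}_Y$ satisfying $\eta_Y\overline{\varepsilon}_Y = \id_{E_Y}$. By construction and by \Cref{Def:assoc-idemp}, the associated idempotent $\overline{\varepsilon}_Y\eta_Y$ equals $e$, so that $(E_Y,\eta_Y,\overline{\varepsilon}_Y)$ is a \emph{second} splitting of the very same idempotent $e$ in $\End_\cat B(X)$.

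Since splittings of an idempotent in any additive category are unique up to a unique compatible isomorphism, I would then produce the canonical $2$-isomorphism $\theta := \eta_Y\epsilon \colon E \overset{\sim}{\Rightarrow} E_Y$ (with inverse $\iota\,\overline{\varepsilon}_Y$) and note that it \emph{simultaneously} satisfies $\theta\iota = \eta_Y$ and $\overline{\varepsilon}_Y\theta = \epsilon$; this two-sided compatibility is forced precisely by the relations $\iota\epsilon = \id_E$ and $\eta_Y\overline{\varepsilon}_Y = \id_{E_Y}$, and it is exactly what lets a single $\theta$ serve for both the monad and the comonad. It then remains to upgrade $\theta$ from an isomorphism of (co)localization data to an isomorphism of the full monad and comonad structures, i.e.\ to check that it intertwines $\mu$ with $I_Y\varepsilon_Y P_Y$ and $\delta$ with $I_Y\overline{\eta}_Y P_Y$; transporting the given monad and comonad along $\theta$ will then exhibit them as the ones induced by the summand $Y$.

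I expect this last verification to be the main obstacle, though it is more bookkeeping than genuine difficulty. By \Cref{Rem:loc} an idempotent monad is completely determined by its localization data $(E,\iota)$, its multiplication being the common inverse of the equal invertible $2$-cells $E\iota = \iota E$, and dually the idempotent comonad is determined by its colocalization data $(E,\epsilon)$. Because $\theta$ already matches $\iota$ with $\eta_Y$ and $\epsilon$ with $\overline{\varepsilon}_Y$, whiskering these identities by $E$ (respectively $E_Y$) and applying the interchange law shows that $\theta$ carries $\mu$ to the multiplication of $Y$ and $\delta$ to its comultiplication, which finishes the argument.
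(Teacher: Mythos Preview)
Your proof is correct and follows essentially the same route as the paper: form the idempotent $e=\epsilon\iota$, split the object via weak block completeness, and identify the resulting summand's induced (co)monad with the given one by comparing the two splittings of~$e$ in $\End_\cat B(X)$. The paper compresses your final two paragraphs into the single sentence ``we can now easily conclude,'' so your version is simply a more explicit rendering of the same argument.
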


\begin{proof}
Since $\iota\epsilon = \id_E$, the 2-cell $e:= \epsilon \iota\colon  \Id_X \Rightarrow E \Rightarrow \Id_X$ is an idempotent element of the ring  $\End_\cat B(\Id_X)$, and so is $f:= 1_X - e$.
By weak block completion, we deduce a direct sum decomposition $X \simeq Y \oplus Z$ such that $\Id_Y$ and $\Id_Z$ correspond to $\img(e)$ and $\img(f)$ respectively. 
Moreover, since $\Id_X \cong \img(e)\oplus \img(f)$ and since $\epsilon$ is a split mono by hypothesis, we have $\img(e)= \img (\iota) \cong E$ in the category $\End_\cat B(X)$, compatibly with the respective structure 2-cells. We can now easily conclude.
\end{proof}

\begin{Prop} 
\label{Prop:crucial-splitting}
Let $Y$ and $Z$ be two direct summands (for two different decompositions) of an object~$X$ in a weakly block complete additive bicategory~$\cat B$, with structural 1-cells denoted respectively by
\[
Y \overset{I}{\longrightarrow} X \overset{P}{\longrightarrow} Y
\quad \textrm{ and } \quad
Z \overset{J}{\longrightarrow} X \overset{Q}{\longrightarrow} Z .
\]
Suppose that $PJQI$ is an equivalence $Y\overset{\sim}{\to} Y$.
Then $QI\colon Y\to Z$ and $PJ\colon Z\to Y$ are part of a direct sum decomposition $Z \simeq Y\oplus Y'$.
\end{Prop}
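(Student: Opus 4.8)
The plan is to realize $Y$ as a direct summand of $Z$ by producing on $Z$ the idempotent special Frobenius structure required by \Cref{Lem:splitting}, using the hypothesis on $PJQI$ to repair the failure of the relevant composites to be identities.

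First I would assemble the given data into a single ambijunction between $Y$ and $Z$. Each summand comes (\Cref{Prop:adj-sums}, \Cref{Ter:dirsum}) with an ambijunction $I \dashv P \dashv I$ and $J \dashv Q \dashv J$. Composing right adjoints shows that $PJ$ is right adjoint to $QI$, and composing left adjoints shows that $PJ$ is also left adjoint to $QI$; hence $QI \dashv PJ \dashv QI$ is an ambijunction $Y \leftrightarrows Z$, with units and counits obtained by pasting those of the four constituent adjunctions. Write $\eta_1, \epsilon_1$ for the unit and counit of $QI \dashv PJ$ and $\eta_2, \epsilon_2$ for those of $PJ \dashv QI$; note $\eta_1, \epsilon_2\colon \Id_Y \rightleftarrows PJQI = W$ and $\eta_2, \epsilon_1\colon \Id_Z \rightleftarrows QIPJ$.

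Next, because $W = PJQI$ is an equivalence, I would fix an adjoint quasi-inverse $W^{-1}$ together with its coherence isomorphisms and set $E := QI\, W^{-1}\, PJ \colon Z \to Z$. The point of inserting $W^{-1}$ is that $(W^{-1}PJ)(QI) \cong \Id_Y$, so $E$ is genuinely idempotent ($E^2 \cong E$), whereas the untwisted composite $QIPJ$ carries only the Frobenius monad coming from the ambijunction, whose multiplication $QI\,\epsilon_2\,PJ$ fails to be invertible precisely because $\epsilon_2\colon W \Rightarrow \Id_Y$ is not (there being in general no $2$-isomorphism $W \cong \Id_Y$ at all). Threading $W^{-1}$ and its coherence cells through $\eta_2, \epsilon_1$ and through $\eta_1, \epsilon_2$, I would define a unit $\iota\colon \Id_Z \Rightarrow E$, a multiplication $\mu\colon E^2 \Rightarrow E$, a counit $\epsilon\colon E \Rightarrow \Id_Z$ and a comultiplication $\delta\colon E \Rightarrow E^2$, and check that $(E,\mu,\iota)$ is an idempotent monad, $(E,\delta,\epsilon)$ an idempotent comonad, and $\iota\epsilon = \id_E$ --- that is, that $E$ is an idempotent special Frobenius monad in the sense of \Cref{Rem:special-Frob}. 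This verification, which amounts to the same kind of $2$-cell surgery (now guided by the equivalence $W$) as in the proof of \Cref{Prop:adj-sums}, is the technical heart of the argument and the step I expect to be the main obstacle; it is exactly what makes the statement harder than its trivial $1$-categorical shadow.

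Finally I would feed this structure into \Cref{Lem:splitting}. Since $\cat B$ is weakly block complete, the lemma yields a decomposition $Z \simeq Y^* \oplus Y'$ whose first summand $Y^*$ induces $(E,\mu,\iota,\delta,\epsilon)$ as in \Cref{Rem:monad}, so that the structural $1$-cells of $Y^*$ inside $Z$ have composite $E = QI\,W^{-1}\,PJ$. As $E$ factors through $Y$ with $(W^{-1}PJ)(QI)\cong \Id_Y$, its image is equivalent to $Y$, whence $Y^* \simeq Y$; under this equivalence the inclusion is $QI$ and the projection is $W^{-1}PJ$, so $QI$ and $PJ$ are the structural $1$-cells (up to the twist by the equivalence $W$) of the decomposition $Z \simeq Y \oplus Y'$. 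If one prefers to bypass \Cref{Lem:splitting}, the same conclusion follows by forming the associated idempotent $e := \epsilon\iota \in \End_{\cat B}(\Id_Z)$, splitting it by weak block completeness, and invoking \Cref{Prop:idemp-char} to identify the resulting summand with $Y$.
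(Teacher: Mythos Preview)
Your proposal rests on a false premise. You assert that the counit $\epsilon_2\colon W=PJQI\Rightarrow \Id_Y$ of the composite adjunction $PJ\dashv QI$ ``fails to be invertible \ldots\ there being in general no $2$-isomorphism $W\cong \Id_Y$ at all'', and this is what drives you to insert~$W^{-1}$. But the paper's proof shows precisely the opposite: writing $\varepsilon=\varepsilon_Y\circ(P\,\overline{\varepsilon}_Z\,I)$ for this counit, one has $\varepsilon_Y$ invertible by the splitting-datum axioms, and the hypothesis that $PJQI$ is an equivalence forces $P\,\overline{\varepsilon}_Z\,I$ to be invertible too (it is a split mono because $\overline{\varepsilon}_Z$ is, and a split epi because the counit of the adjoint equivalence $PJQI\dashv PJQI$ is an isomorphism). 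Hence $\varepsilon$ \emph{is} invertible and $PJQI\cong \Id_Y$. The analogous argument shows the unit $\overline{\eta}$ of $QI\dashv PJ$ is invertible as well.

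Once you know this, the detour through $E=QI\,W^{-1}PJ$ is unnecessary and in fact counterproductive. The paper works directly with $E:=QIPJ$: the composite ambijunction $PJ\dashv QI\dashv PJ$ already equips $E$ with a monad and a comonad structure, the relation $\iota\epsilon=\id_E$ drops out immediately from $\varepsilon_Z=\overline{\eta}_Z^{-1}$ and $\eta_Y\overline{\varepsilon}_Y=\id_{IP}$, and the only nontrivial point is that $\mu=QI\,\varepsilon\,PJ$ and $\delta$ are invertible~--- which is exactly the computation above. Feeding this into \Cref{Lem:splitting} then yields $Z\simeq Y\oplus Y'$ with $QI$ and $PJ$ themselves as the structural $1$-cells, as the proposition demands. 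By contrast, your twisted $E$ does not arise as $RL$ for any evident adjunction $L\dashv R$ (try it: composing $PJ\dashv QI$ with $W^{-1}\dashv W$ gives back $QIPJ$, not $QIW^{-1}PJ$), so the ``threading'' you allude to would have to be done by hand; and even if it succeeds you end up with projection $W^{-1}PJ$ rather than~$PJ$, leaving the actual statement unproved.
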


\begin{proof}
By \Cref{Lem:splitting}, it suffices to show that $E:= (QI)(PJ)$ is both part of an idempotent monad $(E, \mu, \iota)$ and an idempotent comonad $(E,\delta, \epsilon)$ such that  $\iota \epsilon = \id_{E}$.
Suppose $P,I$ and $Q,J$ are part of two splitting data for~$X$ as usual, and write the units and counits of the four adjunctions $I\dashv P\dashv I$ and $J \dashv Q \dashv J$ as follows:
\begin{align*}
\eta_Y \colon \Id_X\Rightarrow IP 
\quad \quad
\varepsilon_Y \colon PI \overset{\sim}{\Rightarrow} \Id_Y
\quad \quad
\overline{\eta}_Y \colon \Id_Y \overset{\sim}{\Rightarrow} PI
\quad\quad
\overline{\varepsilon}_Y \colon IP \Rightarrow \Id_X \\
\eta_Z \colon \Id_X\Rightarrow JQ 
\quad \quad
\varepsilon_Z \colon QJ \overset{\sim}{\Rightarrow} \Id_Z
\quad \quad
\overline{\eta}_Z \colon \Id_Z \overset{\sim}{\Rightarrow} QJ
\quad\quad
\overline{\varepsilon}_Z \colon JQ \Rightarrow \Id_X
\end{align*}
Thus we have $\varepsilon_Y = (\overline{\eta}_Y)^{-1}$ and $\eta_Y \overline{\varepsilon}_Y = \id_{IP}$, and similarly for~$Z$.
Then the two composite adjunctions $PJ\dashv QI \dashv PJ$ have the following units and counits:
\begin{align*}
\eta \colon \xymatrix{ \Id_Z \ar@{=>}[r]^-{\overline{\eta}_Z}_-\sim & QJ \ar@{=>}[r]^-{Q \, \eta_Y J } & QIPJ }
\quad\quad\quad\quad
\varepsilon \colon \xymatrix{ PJQI \ar@{=>}[r]^-{P\, \overline{\varepsilon}_ZI} & PI \ar@{=>}[r]^-{\varepsilon_Y}_-\sim & \Id_Y }
\\
\overline{\eta} \colon  \xymatrix{ \Id_Y \ar@{=>}[r]^-{\overline{\eta}_Y}_-\sim & PI \ar@{=>}[r]^-{P \, \eta_Z I} & PJQI }
\quad\quad\quad\quad
\overline{\varepsilon} \colon \xymatrix{ QI PJ \ar@{=>}[r]^-{Q \, \overline{\varepsilon}_Y J } & QJ \ar@{=>}[r]^-{\varepsilon_Z}_-\sim & \Id_Z  }
\end{align*}
The monad $(E, \mu, \iota)$ and the comonad $(E,\delta, \epsilon)$ on~$Z$ are those induced by $PJ\dashv QI$ and $QI \dashv PJ$, respectively. In particular $\iota = \eta$ and $ \epsilon = \overline{\varepsilon}$, so that the required relation
 $ \iota \epsilon = \eta \overline{\varepsilon} = \id_{QIPJ} $
immediately follows from $\varepsilon_Z = (\overline{\eta}_Z)^{-1}$ and $\eta_Y \overline{\varepsilon}_Y = \id_{IP}$.

It remains to verify that this monad and this comonad are idempotent.
For the monad, recall that by definition its multiplication is $\mu = (QI) \varepsilon (PJ)$, that is
\[
\mu \colon  
\xymatrix{ 
{ E  E = QI PJ QI PJ}
  \ar@{=>}[rr]^-{QI P \, \overline{\varepsilon}_Z  IPJ} &&
QI PI PJ 
  \ar@{=>}[rr]^-{QI  \varepsilon_Y PJ}_-\sim && QIPJ = E
 },
\]
hence we must show that $QI P\, \overline{\varepsilon}_Z  IPJ$ is invertible. 

In fact, we can show that $P\,\overline{\varepsilon}_Z I$ is already invertible.
Indeed, the composite $PJQI$ is an equivalence by hypothesis. 
It follows that the composite adjunction $PJQI \dashv PJQI$ is an adjoint equivalence. 
In particular, its counit 
\[
\alpha\colon
\xymatrix@C=16pt{
PJQIPJQI 
  \ar@{=>}[rr]^-{\;\, PJQ \,\overline{\varepsilon}_Y JQI} &&
PJQJQI 
  \ar@{=>}[rr]^-{PJ  \varepsilon_Z QI}_-\sim &&
PJQI
  \ar@{=>}[rr]^-{P\, \overline{\varepsilon}_Z I} &&
PI 
  \ar@{=>}[rr]^-{\varepsilon_Y}_-\sim &&
\Id_Y 
}
\]
is an isomorphism.
Thus $P \,\overline{\varepsilon}_Z I$ is a split epi, since $(PJ \varepsilon_Z QI)(PJQ \,\overline{\varepsilon}_Y JQI)\alpha^{-1}\varepsilon_Y$ provides a section for it.
But $P \,\overline{\varepsilon}_Z I$ is also a split mono, because $\overline{\varepsilon}_Z$ is a split mono by construction.
Therefore $P \,\overline{\varepsilon}_Z I$ is invertible as claimed.

The proof that the comonad is idempotent is similar and is left to the reader.
\end{proof}

\begin{proof}[Alternative proof of \Cref{Thm:KRS}]
We start by replacing $\cat B$ with its block completion $\cat B^\flat$ as in \Cref{Thm:bc}. 
Note that the embedding $\cat B\hookrightarrow \cat B^\flat$ preserves direct sums (because it is an additive pseudofunctor) and strongly indecomposable objects (because it is 2-fully faithful). 
Also by being 2-fully faithful, it reflects whether two given 1-cells of~$\cat B$ form (adjoint) equivalences, as we are setting out to establish.
Thus we may and will assume that $\cat B$ is block complete. 

For all $k$ and~$\ell$, let
\[
Y_k \overset{I_k}{\longrightarrow} X \overset{P_k}{\longrightarrow} Y_k 
\quad \textrm{ and } \quad
Z_\ell \overset{J_\ell}{\longrightarrow} X \overset{Q_\ell}{\longrightarrow} Z_\ell 
\]
denote the inclusion and projection 1-cells of the two given direct sum decompositions of~$X$, and let $E_k := I_k P_k$ and $F_\ell := J_\ell Q_\ell$ be the corresponding localizations of~$X$. The isomorphism
\[
\Id_X \cong F_1 \oplus \ldots \oplus F_s
\]
in $\End_\cat B(X)$ induces an isomorphism
\[
\Id_{Y_1} 
\cong P_1 I_1 
\cong P_1(F_1 \oplus \ldots \oplus F_s)I_1
\cong P_1F_1I_1 \oplus P_1F_2I_1 \oplus \ldots \oplus P_1F_sI_1
\]
in $\End_\cat B(Y_1)$. Since $Y_1$ is strongly indecomposable, by \Cref{Cor:strongly-indec-X}\,\eqref{it:local-X} we can find an $\ell\in \{1,\ldots , s\}$ such that $P_1F_\ell I_1$ is an equivalence $Y_1 \overset{\sim}{\to} Y_1$. 
By renumbering the~$Z_\ell$'s, we may assume that $\ell =1$. 
Thus
\begin{equation} \label{eq:PFI}
P_1 F_1 I_1 \cong (P_1 J_1) (Q_1 I_1) 
\end{equation}
is an equivalence. 
By \Cref{Prop:crucial-splitting} and the (weak) block completeness of~$\cat B$, the two composites $Q_1I_1\colon Y_1\to Z_1$ and $P_1J_1\colon Z_1\to Y_1$ are part of a direct sum decomposition $Z_1 \simeq Y_1\oplus Y_1'$.
As $Z_1$ is strongly indecomposable it is also indecomposable (\Cref{Rem:str-indec-vs-indec}), hence we must have $Y_1'\simeq 0$, that is $Q_1I_1$ and $P_1J_1$ are in fact an adjoint equivalence $Y_1 \simeq Z_1$ of direct summands of~$X$. 

Note that, by construction, the latter equivalence is the component $Y_1\to Z_1$ of the given composite equivalence (whose components are precisely the $Q_\ell I_k$)
\begin{equation} \label{eq:or-comp-eq}
Y_1 \oplus \underbrace{Y_2 \oplus \ldots \oplus Y_n}_{=:\, Y'} 
\overset{\sim}{\to} 
X 
\overset{\sim}{\to} 
Z_1 \oplus \underbrace{ Z_2 \oplus \ldots \oplus Z_m}_{=: \,Z'} \,.
\end{equation}
We claim that \eqref{eq:or-comp-eq} similarly restricts to an equivalence $Y'\simeq Z'$,
after which the conclusion of the theorem will follow by a routine inductive argument.
To prove the claim, note that the equivalence $Y_1\overset{\sim}{\to} Z_1$ identifies (up to isomorphism) $I_1$ with $J_1$ and $P_1$ with~$Q_1$. 
In particular we have $Q_\ell I_1\cong 0$ and $P_1 J_\ell\cong 0$ for $\ell \neq 1$, which means that \eqref{eq:or-comp-eq} has a diagonal $2\times2$ matrix form with respect to the sum decompositions $Y_1 \oplus Y'$ and $Z_1 \oplus Z'$. 
As \eqref{eq:or-comp-eq} is an equivalence, it follows that the second diagonal component $Y'\to Z'$ is also an equivalence, as claimed.
\end{proof}

\section{Examples}
\label{sec:exas} %

We finally consider some examples of Krull--Schmidt bicategories, concentrating on various kinds of ``2-representation theory''.
We begin with a few generalities:

\begin{Prop} \label{Prop:wb-crit}
Suppose $\cat B$ is an additive bicategory where the 2-cell endomorphism ring $\End_\cat B(\Id_X)$ of every object~$X$ is semiconnected (\Cref{Def:semi-con}). 
Then both its block completion~$\cat B^\flat$ (\Cref{Thm:bc}) and its weak block completion~$\cat B^{w\flat}$ (\Cref{Rem:wbc}) are Krull--Schmidt bicategories.
\end{Prop}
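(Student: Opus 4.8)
The plan is to reduce everything to the characterization of Krull--Schmidt bicategories in \Cref{Thm:KS-char}. Both completions are weakly block complete by design: $\cat B^\flat$ is block complete (\Cref{Thm:bc}), hence also weakly block complete, while $\cat B^{w\flat}$ is weakly block complete by its very construction (\Cref{Rem:wbc}). Moreover the two bicategories have the same objects, and since $\cat B^{w\flat}$ is a 2-full sub-bicategory of $\cat B^\flat$, the 2-cell endomorphism ring $\End(\Id_W)$ of any object $W$ is the same whether computed in $\cat B^{w\flat}$ or in $\cat B^\flat$. So, by \Cref{Thm:KS-char}, it suffices to prove the single statement: \emph{for every object $W$ of $\cat B^\flat$, the commutative ring $\End_{\cat B^\flat}(\Id_W)$ is semiconnected} (\Cref{Def:semi-con}).

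The key structural input is that every object $W$ of $\cat B^\flat$ is a direct summand of (the image of) an object $X$ of~$\cat B$. Indeed, by the construction of the block completion, each object is represented by a pair $(X,e)$ with $X\in \cat B$ and $e=e^2\in \End_\cat B(\Id_X)$, and weak block completeness of $\cat B^\flat$ exhibits a decomposition $X\simeq W\oplus W'$ realizing $e$ as in \Cref{Def:bc}. (No iteration of the splitting process is needed: since $\End_\cat B(\Id_X)$ is commutative, the idempotents of any corner ring $e\End_\cat B(\Id_X)$ are again idempotents of $\End_\cat B(\Id_X)$, already split once and for all.) Using that the embedding $\cat B\hookrightarrow \cat B^\flat$ is 2-fully faithful together with \Cref{Rem:matrix-not} applied to $X\simeq W\oplus W'$, I obtain ring isomorphisms
\[
\End_\cat B(\Id_X)\;\cong\; \End_{\cat B^\flat}(\Id_X)\;\cong\; \End_{\cat B^\flat}(\Id_W)\times \End_{\cat B^\flat}(\Id_{W'}).
\]
Hence $\End_{\cat B^\flat}(\Id_W)$ is a direct ring factor of $\End_\cat B(\Id_X)$, which is semiconnected by hypothesis.

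It then remains to record the elementary ring-theoretic fact that a direct factor of a semiconnected commutative ring is again semiconnected. Writing $R=\End_\cat B(\Id_X)\cong R_1\times\cdots\times R_n$ with each $R_i$ connected, a factorization $R\cong S\times S'$ corresponds to an idempotent, which must be of the form $(\varepsilon_1,\ldots,\varepsilon_n)$ with each $\varepsilon_i\in\{0,1\}$ since each $R_i$ has no nontrivial idempotents; thus $S$ is isomorphic to the product of those $R_i$ with $\varepsilon_i=1$, again a finite product of connected rings. (Equivalently, $\Spec S$ is a clopen piece of $\Spec R$ and so has finitely many connected components.) Applying this with $S=\End_{\cat B^\flat}(\Id_W)$ finishes the argument for both $\cat B^\flat$ and $\cat B^{w\flat}$.

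The main obstacle is the middle paragraph, specifically the claim that every object of the (weak) block completion is a direct summand of an object of~$\cat B$. This is where the actual content of the completion (\Cref{Thm:bc} and \Cref{Rem:wbc}) enters, and it must be extracted either from the explicit construction in \cite{BalmerDellAmbrogio20} or from the ``summand of a summand is a summand'' stability sketched above; once it is in hand, the remaining two paragraphs are purely formal.
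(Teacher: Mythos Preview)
Your proof is correct and follows essentially the same route as the paper's: both argue that the completions are weakly block complete by construction, that every object of either completion is a direct summand of an object coming from~$\cat B$ (so its 2-cell endomorphism ring is a direct factor of some $\End_\cat B(\Id_X)$), and that direct factors of semiconnected rings are semiconnected, concluding via \Cref{Thm:KS-char}. Your version simply supplies more detail (the explicit idempotent argument for direct factors, and the observation that $\cat B^{w\flat}$ and $\cat B^\flat$ share objects and 2-cell endomorphism rings), but the strategy is identical.
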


\begin{proof}
The block completion and weak block completion are both weakly block complete bicategories. 
Moreover, the canonical embeddings $\cat B \hookrightarrow \cat B^\flat$ and $\cat B\hookrightarrow \cat B^{w\flat}$ are 2-fully faithful pseudofunctors, and by construction the 2-cell endomorphism ring of every object in either completion is a direct factor of that of some object of~$\cat B$ (\cf \Cref{Rem:matrix-not}). Since direct factors of semiconnected rings are semiconnected, we conclude with the characterization  of \Cref{Thm:bc}.
\end{proof}

Clearly not every commutative ring is semiconnected (consider $R= \prod_{\mathbb N} \mathbb Z$), hence we need some criteria. 
The following two lemmas already go a long way:

\begin{Lem} \label{Lem:semi-conn-noetherian}
Every noetherian commutative ring is semiconnected.
\end{Lem}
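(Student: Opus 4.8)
The plan is to reformulate semiconnectedness entirely in terms of idempotents and then exploit the ascending chain condition. Recall that decompositions $R \cong R_1 \times \cdots \times R_n$ of a commutative ring correspond bijectively to complete families of orthogonal idempotents $1 = e_1 + \cdots + e_n$, with the factor $R_i \cong Re_i$ connected precisely when $e_i$ is \emph{primitive}, i.e.\ $e_i \neq 0$ and $e_i$ cannot be written as a sum of two nonzero orthogonal idempotents. Thus it suffices to show that the identity of a noetherian commutative ring $R$ can be written as a finite sum of orthogonal primitive idempotents.

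The key step is to rule out infinite orthogonal families of nonzero idempotents. First I would observe that if $e_1, e_2, e_3, \ldots$ were pairwise orthogonal nonzero idempotents, then (orthogonality making each partial sum idempotent, so that $Re_1 + \cdots + Re_k = R(e_1 + \cdots + e_k)$) the ideals
\[
R e_1 \subsetneq R(e_1 + e_2) \subsetneq R(e_1 + e_2 + e_3) \subsetneq \cdots
\]
form a strictly increasing chain: the inclusions are strict because $e_{k+1}$ lies in the $(k+1)$-st ideal but not in the $k$-th, as $e_{k+1}(e_1 + \cdots + e_k) = 0 \neq e_{k+1}$. This contradicts the noetherian hypothesis, so every orthogonal family of nonzero idempotents in $R$ is finite.

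To conclude, I would first note that every nonzero idempotent $e$ dominates a primitive one: were it not so, one could split $e$ repeatedly and read off an infinite orthogonal family of nonzero idempotents below $e$, contradicting the previous step. Now choose a \emph{maximal} orthogonal family $e_1, \ldots, e_n$ of primitive idempotents---necessarily finite, again by the previous step---and set $e := e_1 + \cdots + e_n$. If $e \neq 1$, then $1 - e$ is a nonzero idempotent, hence dominates a primitive idempotent orthogonal to all the $e_i$, contradicting maximality. Therefore $1 = e_1 + \cdots + e_n$ exhibits $R \cong Re_1 \times \cdots \times Re_n$ as a finite product of connected rings, which is exactly semiconnectedness.

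I do not expect any serious obstacle here, as the lemma is elementary; the only point requiring care is the verification that the displayed chain is strictly increasing, which is precisely where orthogonality and the noetherian condition are used. One could alternatively argue topologically: $\Spec R$ is a noetherian topological space, hence has only finitely many connected components, each of which is therefore clopen and so corresponds to an idempotent of $R$, yielding the same product decomposition.
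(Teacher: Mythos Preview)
Your argument is correct. The paper, however, takes the topological route you mention only as an afterthought: it observes that $\Spec R$ is a noetherian space, hence decomposes into finitely many irreducible closed subsets, each of which is connected, so there are only finitely many connected components. Your main argument is genuinely different and more elementary: you work directly with idempotents and the ascending chain condition on ideals, never invoking~$\Spec$. This buys self-containment---no appeal to the dictionary between idempotents and clopen subsets of the spectrum, nor to the decomposition of noetherian spaces into irreducibles---at the price of a slightly longer proof. Both are perfectly standard; your version has the minor advantage that the chain $R(e_1+\cdots+e_k)$ makes the use of noetherianity completely explicit.
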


\begin{proof}
This can be deduced from the fact that the spectrum of a noetherian ring is a finite union of irreducible closed subsets (\cite[Prop.\,I.1.5 and Ex.\,II.2.13]{Hartshorne77}): The latter being connected, there are only finitely many connected components.
\end{proof}

\begin{Lem} \label{Lem:sc-subrings}
Unital subrings of semiconnected rings are semiconnected.
\end{Lem}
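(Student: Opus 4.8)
The plan is to translate the property of being semiconnected into a statement purely about idempotent elements, after which the inclusion $R \subseteq S$ makes the result transparent. Throughout I will use that a commutative ring is connected precisely when its only idempotents are $0$ and $1$.

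First I would establish the reformulation: \emph{a commutative ring $A$ is semiconnected if and only if it has only finitely many idempotents.} For the forward implication, if $A \cong A_1 \times \cdots \times A_n$ with each $A_i$ connected, then every idempotent of $A$ is a tuple of idempotents of the factors, each of which lies in $\{0,1\}$; hence $A$ has exactly $2^n$ idempotents. For the converse, note that the idempotents of $A$ form a Boolean algebra under $e \wedge f := ef$, $\, e \vee f := e + f - ef$ and complement $1 - e$. If this algebra is finite it has finitely many atoms $g_1, \ldots, g_m$ (the minimal nonzero idempotents), which are pairwise orthogonal and satisfy $g_1 + \cdots + g_m = 1$. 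This yields a ring decomposition $A \cong A g_1 \times \cdots \times A g_m$ in which each factor $A g_j$ is connected, its only idempotents being $0$ and $g_j$ by minimality of $g_j$. Hence $A$ is semiconnected, and the reformulation is proved.

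Granting this, the lemma follows at once. If $R \subseteq S$ is a unital subring, so that $1_R = 1_S$, then every idempotent of $R$ is an idempotent of $S$, whence the set of idempotents of $R$ is a subset of that of $S$. Since $S$ is semiconnected, its set of idempotents is finite by the reformulation; therefore $R$ has only finitely many idempotents and is itself semiconnected.

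I expect the only genuine work to lie in the converse half of the reformulation, namely reconstructing the decomposition $A \cong \prod_{j} A g_j$ from the finite Boolean algebra of idempotents, i.e.\ verifying that the atoms are orthogonal and sum to~$1$. This is a standard but not wholly trivial argument (every nonzero idempotent dominates some atom, and an atom not below the sum of all atoms would contradict orthogonality); everything else reduces to the elementary observation that a unital subring inherits its idempotents from the ambient ring.
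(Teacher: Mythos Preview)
Your proof is correct, but the paper argues differently. The paper's proof is geometric: an injective morphism $R \hookrightarrow S$ of commutative rings induces a continuous map $\mathrm{Spec}(S) \to \mathrm{Spec}(R)$ with dense image (the cited exercise in Atiyah--Macdonald), and a continuous map with dense image from a space with finitely many connected components forces the target to have finitely many as well, since the closures of the images of the components cover it by finitely many connected closed subsets. Your route is purely algebraic: you characterize semiconnected rings as those with finitely many idempotents, after which the lemma becomes the trivial observation that idempotents of a unital subring are idempotents of the overring. Your approach is more self-contained and avoids any appeal to $\mathrm{Spec}$; the paper's is terser once the cited density fact is granted and ties in with the topological definition of ``semiconnected'' given in the introduction. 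The two are of course shadows of each other via the correspondence between idempotents of $A$ and clopen subsets of $\mathrm{Spec}(A)$.
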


\begin{proof}
This boils down to injective morphisms of commutative rings inducing maps with dense image between Zariski spectra (\cite[Ch.\,1 Ex.\,21(v)]{AtiyahMacdonald69}): In particular, they induce surjective maps between sets of connected components.
\end{proof}

The following criterion follows immediately from \Cref{Lem:semi-conn-noetherian} and~\Cref{Thm:KS-char}.

\begin{Cor} \label{Cor:KSbicat-homfin}
Suppose $\cat B$ is a weakly block complete bicategory which is $\kk$-linear over some noetherian commutative ring $\kk$ and such that every 2-cell Hom space is finitely generated as a $\kk$-module. 
Then $\cat B$ is a Krull--Schmidt bicategory. 
\qed
\end{Cor}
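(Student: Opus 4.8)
The plan is to invoke the characterization of Krull--Schmidt bicategories from \Cref{Thm:KS-char}. Since $\cat B$ is assumed weakly block complete, that theorem reduces the claim to a single point: showing that the commutative $2$-cell endomorphism ring $\End_\cat B(\Id_X)$ is semiconnected for every object~$X$. So everything comes down to a statement about one commutative ring at a time, with all the bicategorical content already absorbed into \Cref{Thm:KS-char}.

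First I would observe that each $\End_\cat B(\Id_X)$ is a commutative $\kk$-algebra---commutative by the Eckmann--Hilton argument noted in the introduction, and a $\kk$-algebra because $\cat B$ is $\kk$-linear---and that by hypothesis it is finitely generated as a $\kk$-module. The goal is then to upgrade this to the statement that $\End_\cat B(\Id_X)$ is \emph{noetherian as a ring}, after which \Cref{Lem:semi-conn-noetherian} immediately yields semiconnectedness.

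The key step, and the only one requiring more than bookkeeping, is the passage from ``finitely generated $\kk$-module'' to ``noetherian ring''. Here I would use the standard commutative-algebra fact that a finitely generated module over a noetherian ring is a noetherian module, so that $\End_\cat B(\Id_X)$ satisfies the ascending chain condition on $\kk$-submodules. Since the structure map $\kk \to \End_\cat B(\Id_X)$ makes every ideal of $\End_\cat B(\Id_X)$ into a $\kk$-submodule, the ascending chain condition holds in particular for ideals, whence $\End_\cat B(\Id_X)$ is a noetherian ring.

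With that in hand the proof closes immediately: $\End_\cat B(\Id_X)$ is a noetherian commutative ring, hence semiconnected by \Cref{Lem:semi-conn-noetherian}; as this holds for every~$X$ and $\cat B$ is weakly block complete, \Cref{Thm:KS-char} gives that $\cat B$ is Krull--Schmidt. The main (and really only) obstacle is the module-to-ring noetherianity step, which is entirely standard, so the argument is genuinely a one-line deduction once this observation is made.
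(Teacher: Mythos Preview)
Your proof is correct and matches the paper's approach exactly: the paper simply states that the corollary ``follows immediately from \Cref{Lem:semi-conn-noetherian} and~\Cref{Thm:KS-char}'', and you have correctly unpacked the implicit step that a commutative $\kk$-algebra finitely generated as a module over a noetherian~$\kk$ is itself a noetherian ring.
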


We now turn to more concrete situations, beginning with a non-example:

\begin{Exa} 
Recall from \Cref{Exa:ADD} the additive 2-category $\ADD$ of additive categories.
Consider its 2- and 1-full additive sub-bicategory $\ADDic$ of idempotent complete categories:
It is block complete (see \cite[A.7.18]{BalmerDellAmbrogio20}) but not Krull--Schmidt, because it contains $\cat A=\Mod R$ for rings $R$ whose center $\mathrm Z(R)\cong \End(\Id_\cat A)$ is not semiconnected. Here, and again below, we use the well-known identification of the center of a ring with the center $\End(\Id_{\Mod R})$ of its module category.
\end{Exa}

\begin{Exa} \label{Exa:add}
Consider the full 2-subcategory $\cat B$ of $\ADDic$ whose objects are the idempotent complete additive categories $\cat A$ such that $\End(\Id_\cat A)$ is semiconnected. 
Then $\cat B$ is Krull--Schmidt. 
Indeed, if $1_\cat A= e_1 + e_2 \in \End(\Id_{\cat A})$ for such a category $\cat A$ with $e_1,e_2$ two orthogonal idempotents, we get a corresponding decomposition $\cat A \simeq \cat A_1 \oplus \cat A_2$ in $\ADDic$, as the latter is weakly block complete. 
But then $\End(\Id_\cat A) = R_1 \times R_2$ with $R_k \cong \End(\Id_{\cat A_k})$, hence the categories $\cat A_k$ are again in $\cat B$ since direct factors of semiconnected rings are semiconnected.
Thus $\cat B$ is weakly block complete and we conclude with \Cref{Thm:KS-char}.
\end{Exa}

\begin{Exa} \label{Exa:rings}
Many variations of the bicategory $\cat B$ in \Cref{Exa:add}  are possible. 
For instance, we may consider the full subbicategory $\cat B'\subset \cat B$ of module categories $\cat A= \Mod R$ over rings $R$ whose center is semiconnected. 
(Indeed, if $\Mod R\cong \cat A_1 \oplus \cat A_2$ in $\ADDic$ for idempotents $e_1,e_2\in \End(\Id_{\Mod R})\cong \mathrm Z(R)$, we have $\cat A_k \simeq \Mod R_k$ for the rings $R_k:=e_k R e_k = Re_k$ ($k=1,2$). Moreover, each $\mathrm Z(R_k)$ is semiconnected since $\mathrm Z(R)$ is and $\mathrm Z(R) \cong \mathrm Z(R_1)\times \mathrm Z(R_2)$.)
Similarly, we may further restrict to rings~$R$ with noetherian center (by \Cref{Lem:semi-conn-noetherian}) and/or replace $\Mod R$ with the category of finitely presented/generated modules, or projective modules, finitely generated projectives, etc., indeed any option for which the previous splitting argument still goes through. We may also take module categories over finite dimensional algebras $R$  over a fixed field~$\kk$ and $\kk$-linear functors between them, since the center of a finite dimensional algebra is again finite dimensional hence noetherian.
\end{Exa}

\begin{Exa} \label{Exa:bimod}
Consider the bicategory of rings, bimodules and bimodules maps, and its sub-bicategory $\cat B''$ of rings whose center is semiconnected. 
Then $\cat B''$ is Krull--Schmidt.
To see this, just recall that by the Eilenberg-Watts theorem we may identify $\cat B''$ with the 2-full sub-bicategory of $\cat B' \subset \ADDic$ as in \Cref{Exa:rings} having the same objects but where the only 1-cells are the colimit-preserving functors.
Since all functors involved in direct sum decompositions are colimit-preserving (they all have two-sided adjoints), the same object decompositions of $\cat B'$ equally work in~$\cat B''$, hence $\cat B''$ is also weakly block complete.
As before, we may replace ``semiconnected'' with ``noetherian'', or we may work over a fixed~$\kk$.
\end{Exa}

Our original motivation for this article is the following application:

\begin{Cor} \label{Exa:M2M}
Let $\kk$ be any noetherian commutative ring.
Every bicategory of $\kk$-linear \emph{Mackey 2-motives} in the sense of \cite{BalmerDellAmbrogio20} is Krull--Schmidt.
Similarly, every bicategory of \emph{cohomological} Mackey 2-motives as in \cite{BalmerDellAmbrogio21pp} is Krull--Schmidt.
By \Cref{Thm:KRS}, we deduce the uniqueness of the motivic decomposition of every finite group in each of these bicategories.
\end{Cor}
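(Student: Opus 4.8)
The plan is to verify the hypotheses of \Cref{Cor:KSbicat-homfin} for each of these bicategories, namely that they are weakly block complete, $\kk$-linear over the noetherian ring~$\kk$, and have finitely generated 2-cell Hom modules. First I would recall that the bicategory of $\kk$-linear Mackey 2-motives of \cite{BalmerDellAmbrogio20} is constructed via a block-completion procedure and is therefore block complete, hence in particular weakly block complete; its $\kk$-linearity holds by construction. Thus two of the three hypotheses are immediate, and everything reduces to the finiteness of the 2-cell Homs.

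The heart of the matter is precisely this finite generation. Before completion, the 2-cells of the underlying span bicategory are $\kk$-linearizations of finite sets of (equivalence classes of) spans between spans of \emph{finite} groupoids, so the corresponding 2-cell Hom modules are free of finite rank over~$\kk$, and \emph{a~fortiori} finitely generated. Passing to the block completion only introduces objects that are direct summands of old ones, and by \Cref{Rem:matrix-not} the Hom module between two such summands is a direct summand of a Hom module between ambient objects. Since $\kk$ is noetherian, direct summands of finitely generated $\kk$-modules are again finitely generated, so finite generation propagates to the whole completed bicategory. \Cref{Cor:KSbicat-homfin} then applies and yields the Krull--Schmidt property. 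For the cohomological Mackey 2-motives of \cite{BalmerDellAmbrogio21pp} the same reasoning goes through verbatim: they again form a weakly block complete $\kk$-linear bicategory whose 2-cell Homs are (quotients of the above, hence still) finitely generated over~$\kk$, so \Cref{Cor:KSbicat-homfin} again gives the conclusion.

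The uniqueness statement is then a formal consequence: the motive of a finite group $G$ is a particular object of one of these bicategories, and since the bicategory is Krull--Schmidt, \Cref{Thm:KRS} (together with \Cref{Cor:KRS}) guarantees that any two decompositions of this object into indecomposable direct summands coincide up to permutation and equivalence of the factors. I expect the only real obstacle to be pinning down the finiteness of the 2-cell Homs for the precise construction in \cite{BalmerDellAmbrogio20}---in particular confirming that it survives block completion---but this reduces entirely to the elementary fact, recalled above, that direct summands of finitely generated modules over a noetherian ring remain finitely generated.
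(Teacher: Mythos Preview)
Your approach is essentially the same as the paper's: both verify weak block completeness via the block-completion construction, check that the 2-cell Homs are finitely generated over~$\kk$, and then invoke \Cref{Cor:KSbicat-homfin}. The paper's proof is terser---it simply asserts the 2-cell Hom spaces are finitely generated free $\kk$-modules and also notes one could use \Cref{Prop:wb-crit}---whereas you spell out why finite generation survives block completion. (Incidentally, your appeal to noetherianity for that step is harmless but unnecessary: a direct summand of a finitely generated module is a quotient of it, hence finitely generated over any ring.)
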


\begin{proof}
These bicategories are constructed as block completions, hence in particular they are weakly block complete. 
Moreover their 2-cell Hom spaces are easily seen to be finitely generated free $\kk$-modules. 
We may thus apply \Cref{Prop:wb-crit} or  \Cref{Cor:KSbicat-homfin}. (The cohomological case is also a variant of \Cref{Exa:bimod}.)
\end{proof}
\begin{center} $*\;*\;*$ \end{center}

From now on we look at various flavors of linear 2-representation theory, starting with some commonly used target 2-categories for said 2-representations:

\begin{Exa} \label{Exa:2vect}
Let $\kk$ be any field. Consider the 2-category $\mathsf{2FVect}_\kk$ of \emph{finite dimensional 2-vector spaces} (over~$\kk$) in the sense of Kapranov--Voevodsky \cite{KapranovVoevodsky94} and Neuchl~\cite{NeuchlPhD}.
Recall that $\mathsf{2FVect}_\kk$ can be defined as the $\kk$-linear 2-category of finite semisimple $\kk$-linear categories, $\kk$-linear functors and natural transformations.
Here a $\kk$-linear category is \emph{finite semisimple} if it is abelian and there is a finite set of absolutely simple objects (objects with one-dimensional endomorphism algebra) such that every other object is a direct sum of these in an essentially unique way. (Each finite semisimple category is in fact equivalent to $\mathrm{vect}_\kk^n$ for some~$n$.)
One easily checks that $\mathsf{2FVect}_\kk$ is block complete and all its 2-cell Hom $\kk$-vector spaces are finite dimensional. In particular it is Krull--Schmidt by \Cref{Cor:KSbicat-homfin}.
\end{Exa}

\begin{Exa} \label{Exa:2hilb}
Consider the 2-category $\textsf{2FHilb}$ of \emph{finite-dimensional 2-Hilbert spaces} in the sense of Baez~\cite{Baez97} (see also \cite{HeunenVicary19}). 
Recall that a 2-Hilbert space is a dagger category which is enriched on finite dimensional Hilbert spaces (compatibly with adjoints and inner products) and has finite direct sums and split idempotents; it is finite dimensional if it admits a finite set of simple objects generating all objects under finite direct sums.
Then $\textsf{2FHilb}$ is Krull--Schmidt. 
Indeed, there is a 2-functor $\textsf{2FHilb}\overset{\sim}{\to} \textsf{2FVect}_\mathbb C$ forgetting daggers and inner products which is a biequivalence of bicategories (this is analogous to the equivalence between the 1-categories of finite dimensional Hilbert spaces and finite dimensional complex vector spaces).
\end{Exa}

\begin{Exa} \label{Exa:CATfin}
Let $\kk$ be a field. 
A $\kk$-linear category is \emph{finite} if it is equivalent to that of finite dimensional representations of some finite dimensional associative unital $\kk$-algebra.
The 2-category $\CAT_\kk^{\mathsf{fin}}$ of finite $\kk$-linear categories, $\kk$-linear functors and natural transformations is Krull--Schmidt; it is indeed a variation of \Cref{Exa:rings}.
\end{Exa}

\begin{Exa} \label{Exa:CATtrfin}
For a field~$\kk$, consider the 2-category $\mathsf{TRI}_\kk^{\mathsf{fin}}$ whose objects are triangulated categories equivalent to the bounded derived category of some finite $\kk$-linear category (\Cref{Exa:CATfin}), triangulated functors and triangulated natural transformations. We leave it as an exercise to verify that $\mathsf{TRI}_\kk^{\mathsf{fin}}$ is Krull--Schmidt.
\end{Exa}

The next result will essentially take care of all our remaining examples:

\begin{Thm} \label{Thm:KS-PsFun}
Let $\kk$ be any commutative ring (for instance $\kk = \mathbb Z$ or a field).
Consider the $\kk$-linear bicategory $\PsFun_\kk(\cat B, \cat C)$ of $\kk$-linear pseudofunctors between two  $\kk$-linear bicategories $\cat B$ and~$\cat C$ (\Cref{Ter:PsFun}). 
Then $\PsFun_\kk(\cat B, \cat C)$ is Krull--Schmidt if $\cat C$ is Krull--Schmidt and $\cat B$ has finitely many equivalence classes of objects.
\end{Thm}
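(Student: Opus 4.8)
The plan is to verify the two conditions of the characterization in \Cref{Thm:KS-char}: that $\PsFun_\kk(\cat B, \cat C)$ is an additive, weakly block complete bicategory, and that each of its 2-cell endomorphism rings is semiconnected. The first condition is handled immediately by the machinery already in place. Since $\cat C$ is Krull--Schmidt it is in particular additive and weakly block complete (again by \Cref{Thm:KS-char}), so \Cref{Prop:levelwise-wbc} supplies both the pointwise additive structure on $\PsFun_\kk(\cat B, \cat C)$ and its weak block completeness. Thus the only substantive task is the computation of 2-cell endomorphism rings.

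For this, I would fix a $\kk$-linear pseudofunctor $\cat F\colon \cat B\to\cat C$ and study the commutative ring $R:=\End_{\PsFun_\kk(\cat B, \cat C)}(\Id_\cat F)$ of modifications $\Id_\cat F\Rrightarrow\Id_\cat F$. Such a modification $M$ is a family of 2-cells $M_X\in\End_\cat C(\Id_{\cat FX})$, indexed by the objects $X\in\cat B$, subject to the modification axiom, which (working strictly, as the paper's conventions allow) reads $\cat Ff\ast M_X = M_Y\ast\cat Ff$ for every 1-cell $f\colon X\to Y$. Composition and unit in $R$ are computed componentwise, so sending $M$ to the tuple of its components defines an injective unital ring homomorphism $R\hookrightarrow\prod_{X\in\Obj\cat B}\End_\cat C(\Id_{\cat FX})$.

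The crucial point — and the one place where real care is needed — is that the components of $M$ at equivalent objects determine one another. If $F\colon X\xrightarrow{\sim}X'$ is an equivalence in $\cat B$, then $\cat FF\colon\cat FX\to\cat FX'$ is an equivalence in $\cat C$, and the modification axiom $\cat FF\ast M_X = M_{X'}\ast\cat FF$ exhibits $M_{X'}$ as the conjugate of $M_X$ along $\cat FF$; since whiskering by an equivalence is a fully faithful (indeed invertible up to isomorphism) operation on 2-cells, $M_X$ completely determines $M_{X'}$, and in particular $M_X=0$ forces $M_{X'}=0$. Choosing representatives $X_1,\ldots,X_r$ of the \emph{finitely many} equivalence classes of objects of $\cat B$, it follows that the restriction map $R\to\prod_{i=1}^r\End_\cat C(\Id_{\cat FX_i})$, $M\mapsto(M_{X_i})_i$, is still an injective unital ring homomorphism: any $M$ whose components vanish on the $X_i$ vanishes on every object, since each $X$ is equivalent to some $X_i$. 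This is exactly the step that uses the hypothesis on $\cat B$, as it is what keeps the target a \emph{finite} product.

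To conclude, each factor $\End_\cat C(\Id_{\cat FX_i})$ is semiconnected because $\cat C$ is Krull--Schmidt (\Cref{Thm:KS-char}); a finite product of semiconnected rings is again semiconnected, being a finite product of finitely many connected rings (\Cref{Def:semi-con}); and a unital subring of a semiconnected ring is semiconnected by \Cref{Lem:sc-subrings}. Hence $R$ is semiconnected for every $\cat F$, and \Cref{Thm:KS-char} yields that $\PsFun_\kk(\cat B, \cat C)$ is Krull--Schmidt. Everything outside the third paragraph is a direct appeal to cited results; the main obstacle, and the heart of the argument, is making precise that the modification axiom collapses the \emph{a priori} infinite product of component rings onto a finite one by tying together components at equivalent objects.
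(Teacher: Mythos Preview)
Your proof is correct and follows essentially the same route as the paper: weak block completeness via \Cref{Prop:levelwise-wbc}, then embedding $\End(\Id_{\cat F})$ as a unital subring of a finite product of semiconnected rings and invoking \Cref{Lem:sc-subrings} and \Cref{Thm:KS-char}. The only cosmetic difference is that the paper first replaces $\cat B$ by a biequivalent sub-bicategory with finitely many objects (so the full product $\prod_{X\in\Obj\cat B}\End_\cat C(\Id_{\cat FX})$ is already finite), whereas you keep $\cat B$ as is and argue directly that components at equivalent objects determine one another, which amounts to the same reduction.
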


\begin{proof}
By replacing $\cat B$ with a biequivalent sub-bicategory if necessary, we may assume it has finitely many objects.
If $\cat C$ is weakly block complete then so  is $\PsFun_\kk(\cat B, \cat C)$ by \Cref{Prop:levelwise-wbc}.
Now recall that for (non-necessarily $\kk$-linear) pseudofunctors $\cat F_1, \cat F_2\colon \cat B\to \cat C$ and transformations $t,s\colon \cat F_1\Rightarrow \cat F_2$, a modification $M\colon t\Rrightarrow s$ consists of a family $\{M_X\}_X$ of 2-cells $M_X\colon t_X\Rightarrow s_X$ of~$\cat C$ indexed by the objects $X$ of~$\cat B$, satisfying some relations.
In particular, for every $\kk$-linear pseudofunctor $\cat F\colon \cat B\to \cat C$ the ring $\End(\Id_\cat F)$ of self-modifications of the identity transformation is a unital subring of the product $\prod_{X\in \Obj \cat B} \End_{\cat C(\cat FX, \cat FX)}(\Id_{\cat FX})$.
Being a product of finitely many semiconnected rings, the latter is semiconnected. 
Hence so is $\End(\Id_\cat F)$ by \Cref{Lem:sc-subrings}.
Once again, we conclude with \Cref{Thm:KS-char}.
\end{proof}

\begin{Rem} \label{Rem:KS-2Fun}
If $\cat B$ and $\cat C$ are 2-categories with $\cat C$ Krull--Schmidt, the same proof (including that of \Cref{Prop:levelwise-wbc}) shows that the 2-category $\2Fun^{\mathsf{str}}_\kk (\cat B, \cat C)$ of \emph{strict} 2-functors, \emph{strict} transformations (\ie 2-natural transformations) and modifications is also Krull--Schmidt.
In fact, if the 2-category $\cat B$ is small (which we had better assume anyway!), it should be possible to show that the 2-full inclusion $\2Fun^{\mathsf{str}}_\kk (\cat B, \cat C) \subset \PsFun_\kk(\cat B, \cat C)$ is a biequivalence by adapting to the $\kk$-linear case the coherence results for 2-monads due to Power and Lack; see \cite{Power89} \cite{Lack02b} \cite{Lack07b}.
\end{Rem}

A \emph{$\kk$-linear tensor category} is a monoidal categories whose tensor product is a $\kk$-linear functor of both variables. For a fixed such tensor category~$\cat A$, we may consider the 2-category $\PsMod\cat A$ whose objects are \emph{\textup($\kk$-linear left\textup) module categories over~$\cat A$}, whose 1-morphisms are \emph{module functors} and 2-morphisms \emph{module natural transformations}; see \eg \cite{Greenough10} for detailed definitions.
(These notions specialize those of pseudomonoids and their left pseudomodules \cite{DayStreet97} \cite{McCrudden00} to the  symmetric monoidal 2-category $\CAT_\kk$ of $\kk$-linear categories, $\kk$-linear functors and natural transformations.)
In fact:

\begin{Lem} \label{Lem:exp}
For any $\kk$-linear monoidal category~$\cat A$, let $\mathrm B\cat A$ denote its delooping, \ie the bicategory with a single object having $\cat A$ as monoidal endo-category.
Then there is a canonical biequivalence (actually an isomorphism) of $\kk$-linear 2-categories between $\PsMod \cat A$ as defined above and $\PsFun_\kk (\mathrm B \cat A, \CAT_\kk)$ (\Cref{Ter:PsFun}).
\end{Lem}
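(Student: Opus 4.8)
The plan is to treat the lemma as a definitional unwinding: once both sides are written out explicitly, they match term by term, so the content lies entirely in careful bookkeeping rather than in any substantive argument. First I would fix the delooping $\mathrm B\cat A$ as the bicategory with a single object $*$, with $\mathrm B\cat A(*,*)=\cat A$ as a $\kk$-linear category, horizontal composition given by $b\circ a := b\otimes a$, and identity 1-cell $\Id_* := \Unit$; its associator and unitors are those of~$\cat A$. With this convention $\mathrm B\cat A$ is a $\kk$-linear bicategory in the sense of \Cref{Ter:PsFun}, and a $\kk$-linear pseudofunctor out of it lands correctly in $\CAT_\kk$.

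Next I would unwind a $\kk$-linear pseudofunctor $\cat F\colon \mathrm B\cat A\to \CAT_\kk$. Its value $\cat M:=\cat F(*)$ is a $\kk$-linear category, and the functor $\cat F\colon \cat A\to \Fun_\kk(\cat M,\cat M)$ together with its compositor $\cat F(b)\circ \cat F(a)\cong \cat F(b\otimes a)$ and unitor $\Id_\cat M\cong \cat F(\Unit)$ is, by currying (which is available precisely because everything is $\kk$-linear in both variables), exactly the data of an action functor $\cat A\times \cat M\to \cat M$ equipped with associativity and unit constraints. The associativity and unit coherence axioms for the pseudofunctor translate into the pentagon and triangle axioms of a left $\cat A$-module category. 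Thus the objects of $\PsFun_\kk(\mathrm B\cat A,\CAT_\kk)$ are literally the objects of $\PsMod\cat A$.

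I would then repeat the unwinding one dimension up, and then one more. A $\kk$-linear pseudonatural transformation $\cat F_1\Rightarrow \cat F_2$ has a single component 1-cell $\cat M_1\to \cat M_2$ (a $\kk$-linear functor) together with, for each $a\in \cat A$, a naturality isomorphism; comparing the pseudonaturality coherence with the module-functor axiom shows this is exactly a module functor. Likewise a modification has a single component natural transformation, whose coherence condition is precisely that of a module natural transformation. Because $\mathrm B\cat A$ has only one object, each family of components collapses to a single datum, so at every level the correspondence is a genuine bijection rather than merely an equivalence; it visibly preserves all identities, all vertical and horizontal composites, and the $\kk$-linear structure. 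Assembling these bijections yields mutually inverse strict $\kk$-linear 2-functors between $\PsMod\cat A$ and $\PsFun_\kk(\mathrm B\cat A,\CAT_\kk)$, that is, an isomorphism of $\kk$-linear 2-categories, which is the sharp form claimed.

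The only real work, and hence the main obstacle, is the coherence bookkeeping: one must check line by line that the associativity cocycle condition and the unit conditions of a pseudofunctor reproduce the pentagon and triangle identities of a module category (and similarly for the higher cells), and one must pin down the variance and ordering convention for horizontal composition in $\mathrm B\cat A$ so that \emph{left} module categories, rather than right ones, emerge. None of this is conceptually hard, but it is exactly the kind of diagram-chase in which an inattentive choice of convention silently swaps left for right or reverses an associator, so the care spent fixing $b\circ a=b\otimes a$ at the outset is what makes the matching clean.
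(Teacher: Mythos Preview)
Your proposal is correct and follows essentially the same approach as the paper's proof: both treat the lemma as a definitional unwinding, identifying a pseudofunctor $\mathrm B\cat A\to\CAT_\kk$ with a (strong monoidal functor $\cat A\to\End_\kk(\cat M)$, hence a) module category via the exponential/currying correspondence, and then matching pseudonatural transformations and modifications with module functors and module natural transformations. The paper's proof is considerably terser and does not dwell on the coherence bookkeeping or the left/right convention, but the substance is the same.
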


\begin{proof}
The exponential law yields a bijection between objects of the 2-categories, with a (coherently associative and unital) pseudoaction $-\odot-\colon \cat A\otimes_\kk \cat M\to \cat M$ corresponding to a strong monoidal functor $\cat A\to \End_\kk(\cat M)$, that is to a pseudofunctor $\mathrm B\cat A\to \CAT_\kk$ sending the unique object of $\mathrm B\cat A$ to~$\cat M$.
It is immediate from the  definitions that module functors $F\colon \cat M_1\to \cat M_2$ and module natural transformations $\varphi\colon F\Rightarrow F'$ correspond to pseudonatural transformations and modifications.
\end{proof}

\begin{Rem} \label{Rem:PsMon-var}
By replacing $\CAT_\kk$ with a full 2-subcategory $\ADD_\kk$, $\ADDic_\kk$, $\mathsf{2FVect}_\kk$ etc.\ in the above, we can restrict attention to those tensor categories and their module categories which are additive, idempotent complete etc., as appropriate.
\end{Rem}

We conclude by specializing \Cref{Thm:KS-PsFun} to some examples of interest.

\begin{Cor} \label{Cor:tens-cat}
Let $\cat A$ be any finite (multi-) tensor category in the sense of Etingof--Ostrik \cite{Ostrik03} \cite{EtingofOstrik04}, over an algebraically closed field~$\kk$.
Then the 2-category of finite module categories over~$\cat A$ is Krull--Schmidt, as well as its 2-subcategory of exact module categories.
\end{Cor}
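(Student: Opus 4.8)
The plan is to present both 2-categories as $\kk$-linear pseudofunctor bicategories and then invoke \Cref{Thm:KS-PsFun}. First I would combine \Cref{Lem:exp} with \Cref{Rem:PsMon-var}: because $\cat A$ is finite, all its objects are rigid, so each endofunctor $a\odot(-)$ is exact, and a finite module category over~$\cat A$ in the sense of Etingof--Ostrik is exactly a $\kk$-linear pseudofunctor $\mathrm B\cat A\to\CAT_\kk^{\mathsf{fin}}$. Hence the 2-category of finite $\cat A$-module categories is (canonically isomorphic to) $\PsFun_\kk(\mathrm B\cat A,\CAT_\kk^{\mathsf{fin}})$. The delooping $\mathrm B\cat A$ has a single object, hence finitely many equivalence classes of objects (the multi-tensor case makes no difference here), while the target $\CAT_\kk^{\mathsf{fin}}$ is Krull--Schmidt by \Cref{Exa:CATfin}. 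Therefore \Cref{Thm:KS-PsFun} applies and yields the first assertion directly.

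For the 2-subcategory of exact module categories I would instead argue through \Cref{Cor:KSbicat-homfin}. Since $\kk$ is a field it is noetherian, and in the finite setting every space of module natural transformations is a finite-dimensional $\kk$-vector space, so every 2-cell Hom is finitely generated over~$\kk$. It thus remains only to verify weak block completeness of the full 2-subcategory cut out by the exact module categories. Given an exact module category $\cat M$ and an idempotent $e\in\End(\Id_{\cat M})$, the ambient 2-category of finite module categories---already weakly block complete by the first part together with \Cref{Thm:KS-char}---splits $\cat M\simeq\cat M_1\oplus\cat M_2$ at the level of objects, with inclusions and projections that are $\cat A$-linear module functors. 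The only point to check is that the summands $\cat M_1$ and $\cat M_2$ are again exact.

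The step I expect to be the main obstacle is precisely this closure of exactness under direct summands, which is the one place the special structure of $\cat A$ enters. I would settle it using the standard Etingof--Ostrik characterization that a finite module category is exact if and only if $P\odot M$ is projective for every projective $P\in\cat A$ and every~$M$: since a module-category summand $\cat M_i$ is closed under the $\cat A$-action and projective objects of $\cat M$ lying in $\cat M_i$ remain projective there, each $\cat M_i$ inherits this projectivity property and is therefore exact. (Equivalently, one may simply cite the fact that direct summands of exact module categories are exact.) Granting this, both summands lie in the subcategory, weak block completeness holds, and \Cref{Cor:KSbicat-homfin} completes the proof.
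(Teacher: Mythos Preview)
Your argument for the first assertion is essentially identical to the paper's: identify finite $\cat A$-module categories with $\PsFun_\kk(\mathrm B\cat A,\CAT_\kk^{\mathsf{fin}})$ via \Cref{Lem:exp} and \Cref{Rem:PsMon-var}, note that $\mathrm B\cat A$ has one object and that $\CAT_\kk^{\mathsf{fin}}$ is Krull--Schmidt, and apply \Cref{Thm:KS-PsFun}.

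For the exact sub-2-category you take a different, more laborious route. The paper simply observes that exact module categories are finite (citing \cite[Lem.~3.4]{Ostrik03}) and closed under direct summands, and concludes immediately: a \emph{full} 2-subcategory of a Krull--Schmidt bicategory which is closed under direct summands is itself Krull--Schmidt, straight from \Cref{Def:KS-bicat}, since any decomposition into strongly indecomposables in the ambient bicategory already lives in the subcategory. You instead invoke \Cref{Cor:KSbicat-homfin}, which forces you to assert that \emph{every} 2-cell Hom space (all module natural transformations between arbitrary module functors) is finite-dimensional. That is a stronger claim than you need and than you justify; the only rings that matter for the Krull--Schmidt property are the $\End(\Id_{\cat M})$, whose finite-dimensionality is clear. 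Your argument is not wrong---the closure of exactness under summands is indeed the crux, and you identify it correctly---but the paper's shortcut avoids the extraneous finiteness claim entirely and makes the logical structure more transparent.
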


\begin{proof}
A finite tensor (or ``multi-tensor'') category $\cat A$ is in particular a $\kk$-linear tensor category in our sense above; it is one which is \emph{finite} as in \Cref{Exa:CATfin}.
Since the 2-subcategory $\CAT_\kk^\mathsf{fin}\subset \CAT_\kk$ of finite $\kk$-linear categories is Krull--Schmidt, so is $\PsFun_\kk(\mathrm B\cat A, \CAT_\kk^\mathsf{fin})$ by \Cref{Thm:KS-PsFun}. 
We conclude with \Cref{Lem:exp} and \Cref{Rem:PsMon-var} that finite $\cat A$-module categories form a Krull--Schmidt 2-category.

By definition, a module category~$\cat M$ is \emph{exact} if $P\odot M$ is a projective object in $\cat M$ for every $M\in \cat M$ and every projective $P\in \cat A$. Over a finite~$\cat A$, exact module categories are finite (\cite[Lemma~3.4]{Ostrik03}) and are clearly closed under taking direct summands, hence they form a full 2-subcategory which is also Krull--Schmidt.
\end{proof}

\begin{Cor} \label{Cor:ss}
Every finite semisimple 2-category in the sense of Douglas--Reutter \cite{DouglasReutter18pp} is Krull--Schmidt, and its indecomposable and simple objects coincide.
\end{Cor}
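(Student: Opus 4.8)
The plan is to apply the finiteness criterion \Cref{Cor:KSbicat-homfin} and then to reconcile the two notions of atom. Recall that a finite semisimple $2$-category $\cat C$ \`a~la \cite{DouglasReutter18pp} is, in particular, a $\kk$-linear $2$-category over a field $\kk$ whose Hom categories $\cat C(X,Y)$ are finite semisimple (\ie finite dimensional $2$-vector spaces as in \Cref{Exa:2vect}) and which is Cauchy complete in their strong sense; only these features enter the Krull--Schmidt part of the argument.

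First I would secure weak block completeness. The remark following \Cref{Prop:levelwise-wbc} records that the idempotent completeness of \cite{DouglasReutter18pp} is stronger than our block completeness---it asks that every Hom category be idempotent complete and every separable monad split. A finite semisimple $2$-category is idempotent complete in their sense, hence block complete, hence \afortiori weakly block complete. Next I would bound the size of the $2$-cell Homs: since each $\cat C(X,Y)$ is finite semisimple over the field $\kk$, every $2$-cell Hom space $\Hom_{\cat C(X,Y)}(F,G)$ is finite dimensional over $\kk$, thus finitely generated over the noetherian ring $\kk$. With weak block completeness in hand, \Cref{Cor:KSbicat-homfin} applies directly and shows that $\cat C$ is Krull--Schmidt. (Equivalently, each commutative ring $\End_{\cat C}(\Id_X)$ is finite dimensional, hence noetherian, hence semiconnected by \Cref{Lem:semi-conn-noetherian}, and one invokes \Cref{Thm:KS-char}.)

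It remains to match simple with indecomposable objects. By \Cref{Cor:KRS} the indecomposable objects coincide with the strongly indecomposable ones, so it suffices to identify simple with strongly indecomposable. As $\cat C(X,X)$ is finite semisimple it is idempotent complete, so all four conditions of \Cref{Cor:strongly-indec-X} are equivalent; in particular $X$ is strongly indecomposable (condition \eqref{it:localunit-X}) exactly when $\Id_X$ is indecomposable in $\cat C(X,X)$ (condition \eqref{it:unit-X}). Now in a semisimple category an object is indecomposable if and only if it is simple, and $X$ being a simple object of $\cat C$ means precisely that its unit $\Id_X$ is a simple object of the endomorphism category $\cat C(X,X)$. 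Chaining these equivalences yields simple $=$ strongly indecomposable $=$ indecomposable.

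The main obstacle I anticipate lies in this last reconciliation: the Douglas--Reutter notion of a simple object is phrased through the monoidal endomorphism category $\cat C(X,X)$ rather than through $\oplus$-decompositions of $X$ itself, so the argument hinges on unwinding that definition and on the standard fact that indecomposable and simple objects coincide in a finite semisimple category. Once that dictionary is in place, everything reduces to threading together the already-established equivalences of \Cref{Cor:strongly-indec-X} and \Cref{Cor:KRS}. The Krull--Schmidt half, by contrast, is a routine invocation of \Cref{Cor:KSbicat-homfin}.
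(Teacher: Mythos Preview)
Your argument is correct but follows a genuinely different route from the paper's. The paper first invokes the Douglas--Reutter classification theorem \cite[Thms.\,1.4.8--9]{DouglasReutter18pp}, which identifies every finite semisimple 2-category with the 2-category of finite semisimple module categories over some multifusion category~$\cat A$; it then uses \Cref{Lem:exp} to rewrite this as $\PsFun_\kk(\mathrm B\cat A, \mathsf{2FVect}_\kk)$ and concludes via \Cref{Thm:KS-PsFun}. You instead bypass the classification entirely and verify the hypotheses of \Cref{Cor:KSbicat-homfin} directly from the definition: local finite semisimplicity gives finite-dimensional 2-cell Homs, and Douglas--Reutter idempotent completeness (stronger than block completeness, as recorded in the Remark after \Cref{Prop:levelwise-wbc}) gives weak block completeness. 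Your approach is more elementary and more general---it applies to any \emph{locally} finite semisimple 2-category, without the finiteness of the set of simples needed for the classification theorem---and the paper itself concedes this in the Remark immediately following \Cref{Cor:ss}, calling its own route ``overkill''. What the paper's route buys is thematic unity: it exhibits \Cref{Cor:ss} as yet another instance of the same pseudofunctor-bicategory machinery (\Cref{Thm:KS-PsFun}) that drives \Cref{Cor:tens-cat}, \Cref{Cor:2reps} and \Cref{Cor:MM}. Your treatment of simple $=$ indecomposable via \Cref{Cor:strongly-indec-X} is also more explicit than the paper's, which dispatches this point in a single sentence.
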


\begin{proof}
As in \cite{DouglasReutter18pp}, here we work over an algebraically closed field~$\kk$ of characteristic zero.
Then by \cite[Thms.\,1.4.8-9]{DouglasReutter18pp}, the 2-category of finite (left) semisimple module categories over a multifusion category~$\cat A$ is an example of a finite semisimple 2-category, and the latter are all of this form up to biequivalence. (The cited result is formulated for right module categories, but it holds equally well for left ones because multifusion categories are stable under reversing their tensor product).
By definition, a multifusion category is just a finite semisimple tensor category, thus in particular a $\kk$-linear tensor category~$\cat A$ as above.
By \Cref{Lem:exp}, with $\CAT_\kk$ replaced by $\mathsf{2FVect}_\kk$ (\Cref{Rem:PsMon-var}), we see that the  2-category of finite semisimple module categories over $\cat A$ is biequivalent to $\PsFun_\kk (\mathrm B \cat A, \mathsf{2FVect}_\kk)$. 
Since $\mathsf{2FVect}_\kk$ is Krull--Schmidt (\Cref{Exa:2vect}), we can therefore conclude with \Cref{Thm:KS-PsFun}.

It is easily checked that simples and indecomposables agree.
\end{proof}

\begin{Rem}
We found it satisfying to derive \Cref{Cor:ss} from \Cref{Thm:KS-PsFun}, but this is overkill: More directly, and more generally, it follows from the results in \cite{DouglasReutter18pp} that all \emph{locally finite} semisimple 2-categories are also Krull--Schmidt.
\end{Rem}

Recall now that a \emph{2-groupoid} is a bicategory in which every 1-cell is an equivalence and every 2-cell an isomorphism. It is a \emph{2-group} if it has a single object, \ie if it is the delooping of a monoidal groupoid whose objects are tensor-invertible.

\begin{Cor} \label{Cor:2reps}
Let $\kk$ be any field.
Let $\cat G$ be a 2-groupoid with only finitely many equivalence classes of objects (\eg a 2-group).
Then the bicategory 
\[ \mathsf{2FRep}_\kk \mathcal (\cat G) := \PsFun(\cat G, \mathsf{2FVect}_\kk) 
\]
of finite dimensional $\kk$-linear 2-representations of $\cat G$ is Krull--Schmidt.
\end{Cor}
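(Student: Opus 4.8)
The plan is to reduce \Cref{Cor:2reps} directly to \Cref{Thm:KS-PsFun}. That theorem asserts that $\PsFun_\kk(\cat B,\cat C)$ is Krull--Schmidt whenever $\cat C$ is Krull--Schmidt and $\cat B$ has finitely many equivalence classes of objects. Here the source is the 2-groupoid $\cat G$ and the target is $\mathsf{2FVect}_\kk$, so the strategy is simply to verify the three hypotheses: that the pseudofunctor bicategory in the statement is of the right $\kk$-linear form, that the target $\mathsf{2FVect}_\kk$ is Krull--Schmidt, and that the source $\cat G$ has finitely many equivalence classes of objects.

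First I would address the target: $\mathsf{2FVect}_\kk$ is Krull--Schmidt by \Cref{Exa:2vect}, where this was established via block completeness together with finite-dimensionality of the 2-cell Hom spaces and the criterion of \Cref{Cor:KSbicat-homfin}. Next, the source hypothesis is satisfied by assumption, since $\cat G$ is taken to have only finitely many equivalence classes of objects (and a 2-group, being the delooping of a monoidal groupoid, has a single object and so trivially qualifies). The only genuine point requiring comment is linearity: \Cref{Thm:KS-PsFun} is stated for $\PsFun_\kk$, the bicategory of \emph{$\kk$-linear} pseudofunctors, whereas the statement of \Cref{Cor:2reps} writes $\PsFun(\cat G, \mathsf{2FVect}_\kk)$ without a subscript. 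I would note that since $\cat G$ is a 2-groupoid---every 1-cell is an equivalence and every 2-cell an isomorphism---the Hom categories of $\cat G$ carry no nontrivial additive or $\kk$-linear structure to preserve, so every pseudofunctor $\cat G\to \mathsf{2FVect}_\kk$ is automatically $\kk$-linear and the two bicategories coincide. Thus $\mathsf{2FRep}_\kk(\cat G)=\PsFun_\kk(\cat G,\mathsf{2FVect}_\kk)$.

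With these observations in place, \Cref{Thm:KS-PsFun} applies verbatim with $\cat B=\cat G$ and $\cat C=\mathsf{2FVect}_\kk$, yielding that $\mathsf{2FRep}_\kk(\cat G)$ is Krull--Schmidt. The proof is therefore a one-line invocation of the theorem once the bookkeeping about which results supply which hypotheses is made explicit.

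I do not expect any serious obstacle here; this corollary is designed as a direct specialization of \Cref{Thm:KS-PsFun}, and the author's introduction flags exactly this role. The only subtlety worth flagging---and the one place where a careless reader might stumble---is the reconciliation of the notation $\PsFun$ versus $\PsFun_\kk$, which is harmless precisely because of the 2-groupoid hypothesis on $\cat G$. If one wished to handle a source that was not a 2-groupoid, one would instead have to restrict to the $\kk$-linear pseudofunctor bicategory explicitly, but for 2-representations of 2-groupoids this distinction evaporates.
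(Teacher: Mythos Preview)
Your overall strategy---reduce to \Cref{Thm:KS-PsFun} with target $\mathsf{2FVect}_\kk$---is the same as the paper's, and your verification that the target is Krull--Schmidt and that the source has finitely many equivalence classes of objects is fine. The gap is in the bridge between $\PsFun$ and $\PsFun_\kk$. You write that ``every pseudofunctor $\cat G\to \mathsf{2FVect}_\kk$ is automatically $\kk$-linear'' because the Hom categories of $\cat G$ carry no $\kk$-linear structure to preserve. But this is not well-formed: by \Cref{Ter:PsFun}, a $\kk$-linear pseudofunctor is defined only between $\kk$-linear bicategories, and a 2-groupoid is \emph{not} $\kk$-linear (its Hom categories are groupoids, and a nonzero $\kk$-linear category always has a noninvertible morphism, namely zero). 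So $\PsFun_\kk(\cat G,\mathsf{2FVect}_\kk)$ is simply undefined, and \Cref{Thm:KS-PsFun} cannot be invoked with $\cat B=\cat G$.

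The paper fixes this by inserting one extra step: it takes the free $\kk$-linearization $\kk\cat G$, a $\kk$-linear bicategory with the same objects as $\cat G$, whose universal property yields a biequivalence $\PsFun_\kk(\kk\cat G,\cat C)\simeq \PsFun(\cat G,\cat C)$ for every $\kk$-linear~$\cat C$. Now \Cref{Thm:KS-PsFun} applies to the left-hand side with $\cat B=\kk\cat G$, and transports across the biequivalence. Your intuition (``nothing to preserve'') is exactly what makes this biequivalence hold, but it has to be packaged as a statement about $\kk\cat G$ rather than as a claim that $\cat G$ itself fits the hypotheses. An alternative route would be to observe that the proofs of \Cref{Prop:levelwise-wbc} and \Cref{Thm:KS-PsFun} use only the $\kk$-linear structure of the \emph{target}, hence go through verbatim for $\PsFun(\cat B,\cat C)$ with arbitrary source; but you would then need to say so explicitly rather than appeal to the theorem as stated.
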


\begin{proof}
Consider the (easily constructed) free $\kk$-linearization of~$\cat G$, which is a $\kk$-linear bicategory $\kk \cat G$ with the same objects as $\cat G$ and equipped with a pseudofunctor $\cat G\to \kk \cat G$ inducing a biequivalence
$
\PsFun_\kk (\kk \cat G , \cat C) \overset{\sim}{\to} \PsFun ( \cat G , \cat C)
$ 
for every $\kk$-linear bicategory~$\cat C$.
In particular, for $\cat C = \mathsf{2FVect}_\kk$ we obtain a biequivalence between  $\mathsf{2FRep}_\kk \mathcal (\cat G)$ and a Krull--Schmidt bicategory as in \Cref{Thm:KS-PsFun}.
\end{proof}

\begin{Rem}
By varying the 2-groupoid~$\cat G$, we can specialize $\mathsf{2FRep}_\kk(\cat G)$ to an impressive variety of families of interesting 2-categories already over an algebraically closed field of characteristic zero, \ie when we are at the semisimple intersection of \Cref{Cor:2reps} and \Cref{Cor:ss}. 
See the detailed bestiary in \cite[\S1.4.5]{DouglasReutter18pp}.
\end{Rem}

\begin{Rem}
By replacing $\mathsf{2FVect}_\kk$ with $\mathsf{2FHilb}$ (\Cref{Exa:2hilb}) we obtain the Krull--Schmidt property for \emph{unitary} finite dimensional representations of 2-groups. 
\end{Rem}

\begin{Cor} \label{Cor:MM}
Let $\cat B$ be any $\kk$-finitary 2-category in the sense of Mazorchuk--Miemietz \cite{MazorchukMiemietz11}, over an algebraically closed field~$\kk$.
Then the 2-category $\cat B \mathfrak{-mod}$ of $\kk$-linear 2-representations of $\cat B$ is Krull--Schmidt.
\end{Cor}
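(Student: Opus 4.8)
The plan is to recognize $\cat B \mathfrak{-mod}$ as a bicategory of $\kk$-linear pseudofunctors into a Krull--Schmidt target and to conclude with \Cref{Thm:KS-PsFun}, exactly as in the proofs of \Cref{Cor:tens-cat}, \Cref{Cor:ss} and \Cref{Cor:2reps}. By the definition of Mazorchuk--Miemietz, a $\kk$-finitary 2-category $\cat B$ has in particular only \emph{finitely many objects}, and its finitary 2-representations are the $\kk$-linear 2-functors $\cat B \to \mathfrak{A}_\kk$ into the 2-category $\mathfrak{A}_\kk$ of finitary $\kk$-linear categories (equivalently, categories of finitely generated projective modules over finite dimensional $\kk$-algebras), with strong transformations as 1-morphisms and modifications as 2-morphisms. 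Thus the finiteness hypothesis on the source in \Cref{Thm:KS-PsFun} is automatic.

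First I would verify that the target $\mathfrak{A}_\kk$ is Krull--Schmidt, which is a minor variation of \Cref{Exa:rings} (or \Cref{Exa:CATfin}). On one hand $\mathfrak{A}_\kk$ is weakly block complete by the same splitting argument used there: a decomposition $1_{\cat A} = e_1 + e_2$ of an identity 2-cell lives in $\End_{\mathfrak{A}_\kk}(\Id_{\cat A}) \cong \mathrm{Z}(\cat A)$, the center of the finitary category~$\cat A$, and (using that $\cat A$ is idempotent complete) induces a decomposition $\cat A \simeq \cat A_1 \oplus \cat A_2$ into finitary categories. On the other hand each center $\mathrm{Z}(\cat A)$ is finite dimensional over the field~$\kk$, since $\cat A$ has finitely many indecomposable objects and finite dimensional morphism spaces; being noetherian it is semiconnected by \Cref{Lem:semi-conn-noetherian}. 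Hence \Cref{Thm:KS-char} applies and $\mathfrak{A}_\kk$ is Krull--Schmidt.

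The only genuine subtlety---and the part I would take most care over---is that Mazorchuk--Miemietz \cite{MazorchukMiemietz11} phrase everything in terms of (possibly strict) 2-functors and strong transformations, whereas \Cref{Thm:KS-PsFun} is stated for the pseudofunctor bicategory of \Cref{Ter:PsFun}. Here $\cat B \mathfrak{-mod}$ is precisely the 2-full sub-bicategory of $\PsFun_\kk(\cat B, \mathfrak{A}_\kk)$ on the strict objects, retaining all pseudonatural (strong) transformations and all modifications; it is therefore 2-fully faithful inside $\PsFun_\kk(\cat B, \mathfrak{A}_\kk)$, and, since every $\kk$-linear pseudofunctor out of a 2-category is pseudonaturally equivalent to a strict one, it is also essentially surjective, hence a biequivalence (compare \Cref{Rem:KS-2Fun}). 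As this biequivalence is additive, $\kk$-linear and 2-fully faithful, it transports the Krull--Schmidt property, and the conclusion follows from \Cref{Thm:KS-PsFun}. The remaining work is bookkeeping: pinning down the exact flavor of finitary 2-representations used in the source and confirming that their 1- and 2-morphisms are the pseudonatural transformations and modifications of \Cref{Ter:PsFun}.
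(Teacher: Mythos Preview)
Your proposal is correct and follows essentially the same route as the paper: verify that the target is Krull--Schmidt, note that a $\kk$-finitary 2-category has finitely many objects, and invoke \Cref{Thm:KS-PsFun}. Two minor differences are worth noting. First, the paper takes the target to be $\CAT^{\mathsf{fin}}_\kk$ (finite $\kk$-linear abelian categories, \Cref{Exa:CATfin}) rather than the additive/finitary $\mathfrak{A}_\kk$; either choice works and both are variants of \Cref{Exa:rings}. Second, and more interestingly, the paper treats the strictness issue by appealing to \Cref{Rem:KS-2Fun}, which observes that the \emph{proof} of \Cref{Thm:KS-PsFun} (and of \Cref{Prop:levelwise-wbc}) goes through verbatim for strict 2-functors and strict transformations---no biequivalence with $\PsFun_\kk$ is needed. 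Your biequivalence argument is also fine, but it leans on the $\kk$-linear strictification result that \Cref{Rem:KS-2Fun} only asserts ``should be possible''; the paper's direct route sidesteps this dependence.
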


\begin{proof}
By definition $\cat B \mathfrak{-mod} := \2Fun^\mathsf{str}_\kk (\cat B, \CAT^{\mathsf{fin}}_\kk)$ as in \Cref{Rem:KS-2Fun} and $\cat B$ has finitely many objects, hence we may once again apply \Cref{Thm:KS-PsFun}.
\end{proof}

Many further variations are possible, for instance one may wish to study finite ``derived'' rather than abelian or semisimple 2-representations by using the Krull--Schmidt 2-category $\mathsf{TRI}^{\mathsf{fin}}_\kk$ as target (\Cref{Exa:CATtrfin}).
As sampled above, most directions of 2-representation theory found in the literature impose strong finiteness conditions (often motivated by combinatorial goals) which ensure the Krull--Schmidt property. 
We trust interested readers to adapt our proofs to any such situations.


\bibliographystyle{alpha}

\end{document}